 \documentclass[final,onefignum,onetabnum]{siamart171218}

\ifpdf
\hypersetup{
  pdftitle={Transmission and Reflection coefficients for Schr{\"o}dinger Operators with Truncated Periodic Potentials that support gap states},
  pdfauthor={J.C. Stellman and J.L. Marzuola}
}
\fi

\title{Transmission and Reflection coefficients for Schr{\"o}dinger Operators with Truncated Periodic Potentials that support defect states}
\author{J.C. Stellman\thanks{Undergraduate at Department of Mathematics, University of North Carolina, Chapel Hill, NC during the production of this work}
\and Jeremy L. Marzuola\thanks{Department of Mathematics, University of North Carolina, Chapel Hill, NC (\email{marzuola@email.unc.edu}).}}

\newsiamthm{assn}{Assumption}
\newsiamremark{rmk}{Remark}
\newsiamthm{prop}{Proposition}
\newsiamthm{result}{Result}

\newcommand{\re}{\operatorname{Re}}
\newcommand{\im}{\operatorname{Im}}
\newcommand{\dz}{\partial_z}
\newcommand{\dw}{\partial_w}

\newcommand{\zy}{z_{\hat Y}}
\newcommand{\wy}{w_{\hat Y}}
\newcommand{\zx}{z_{\hat X}}

\newcommand{\zetay}{\zeta_{\hat Y}}

\newcommand{\zyn}{z_{\hat Y_n}}
\newcommand{\wyn}{w_{\hat Y_n}}

\newcommand{\zetayn}{\zeta_{\hat Y_n}}

\newcommand{\uz}{u_{\mathbf{\zeta}}}
\newcommand{\ue}{u_{\mathbf{\eta}}}
\newcommand{\vz}{v_{\mathbf{\zeta}}}
\newcommand{\ve}{v_{\mathbf{\eta}}}

\newcommand{\uzn}{u_{\mathbf{\zeta}_n}}
\newcommand{\vzn}{v_{\mathbf{\zeta}_n}}
\newcommand{\uen}{u_{\mathbf{\eta}_n}}
\newcommand{\ven}{v_{\mathbf{\eta}_n}}

\newcommand{\vecu}{\vec{u}_\zeta}
\newcommand{\Ue}{U_\eta}

\newcommand{\dztpe}{\partial_z\Theta^+(\eta)}
\newcommand{\dztme}{\partial_z\Theta^-(\eta)}
\newcommand{\dwtpe}{\partial_w\Theta^+(\eta)}
\newcommand{\dwtme}{\partial_w\Theta^-(\eta)}
\newcommand{\dztpz}{\partial_z\Theta^+(\zeta)}
\newcommand{\dztmz}{\partial_z\Theta^-(\zeta)}
\newcommand{\dwtpz}{\partial_w\Theta^+(\zeta)}
\newcommand{\dwtmz}{\partial_w\Theta^-(\zeta)}

\newcommand{\dztpen}{\partial_z\Theta^+(\eta_n)}
\newcommand{\dztmen}{\partial_z\Theta^-(\eta_n)}
\newcommand{\dwtpen}{\partial_w\Theta^+(\eta_n)}
\newcommand{\dwtmen}{\partial_w\Theta^-(\eta_n)}

\usepackage{amsmath, amssymb, graphicx}

\begin{document}

\maketitle

\begin{abstract}
  We consider scattering waves through truncated periodic potentials with perturbations that support localized gap eigenstates. In a small complex neighborhood around an assumed positive bound state of the model operator, we prove the existence of a distinct zero reflection state, or transmission resonance. We compare its location to a previously found scattering resonance and use the properties of solutions near these interesting points to analyze the behavior of transmission and reflection coefficients of scattering solutions near the assumed bound state.  By example, we also discuss the truncated simple harmonic oscillator and compare the analysis to the crystalline case.
\end{abstract}

\tableofcontents

\section{Introduction}
\label{sec:introduction}

 We consider here the transmission and reflection diagrams for a class of $1d$ scattering potentials that are chosen to be truncated from objects with clear spectral structure on the full real line.  The motivation for this stems from looking at how the signature of defect eigenvalues in the scattering theory associated to a truncation of a system extending to infinity.  A common strategy for creating a structure that hosts a localized bound state is to build a structure which is perfectly periodic except for a defect region or using trapping by a well-like potential that grows sufficiently fast at infinity. Bound states created by defects in an otherwise periodic structure are known as \emph{defect states}.  Each of these settings allows for isolated, discrete spectrum to form.  In the settings of perturbation of periodic media, many previous works have considered the existence of defect states across a large variety of models; see e.g. \cite{1976Simon_2,1986DeiftHempel,1993GesztesySimon,1997FigotinKlein,Figotin-Klein:98,Cherdantsev2009,Hoefer-Weinstein:11,2011BronskiRapti,duchene2015oscillatory,DVW:15,Kamotski2018a} and the numerical works \cite{2001FigotinGoren,2005Soussi}.

In higher dimensional lattice structures, the study of "topological edge states" has become a very important topic.  These are uni-directional states that decay away from the physical edge of a material, or off of a line of defects within a material, and have been studied considering wave-guiding and lasing applications.  See \cite{harari2018topological,bandres2018topological}, as well as \cite{fefferman_leethorp_weinstein_memoirs,2017FeffermanLee-ThorpWeinstein_2,2018FeffermanWeinstein,2019Drouot_2,2018DrouotFeffermanWeinstein_pre,2019Lee-ThorpWeinsteinZhu,2019DrouotWeinstein,2020Drouot}. For related numerical work, see \cite{2018ThickeWatsonLu}. While edge states arise in mostly 2D models, under certain symmetry conditions 1D models such as those considered here are natural to first understand.

 In a related previous work, the authors of \cite{LMW22} considered how defect states deformed into scattering resonances when an infinite periodic structure was truncated.  Our goal here is to study a complimentary problem related to the transmission/reflection coefficients associated to the scattering matrix for the truncated structure.  The connection between scattering resonances and transmission resonances will be made clear below.  Initially, the transmission/reflection coefficient problem may seem perpendicular since resonances can easily be seen as generalizations of the notion of eigenvalue, while the transmission/reflection coefficients are known to be crucially linked to the essential spectrum of the infinite structure that has been truncated.  However, near the energy of a defect state, we will particularly observe that for a large class of examples the transmission resonance leads to non-uniform convergence of transmission/reflection coefficients to that of the spectrum of the relevant infinite spectral structure Schr\"odinger operators, which has been explored in other examples such as \cite{duchene2011scattering,osting2013long,duchene2014scattering}.  
 
 Our techniques arise from similar methods used for comparison of scattering resonances to eigenvalues as pursued in \cite{LMW22} (and are heavily influenced by the truncated Simple Harmonic Oscillator example in \cite{DZ19}).  Indeed, to streamline exposition our main results will focus on truncation of periodic structures, but we will demonstrate below in Section \ref{sec:tho} that our results naturally extend to truncation of trapping potentials as well using the simple harmonic oscillator as an example. Scattering resonance by comparison to infinite barriers has also been studied in \cite{dobson2013resonances}.  Recently, nonlinear implications for both transmission and scattering resonances were explored in \cite{turner2025resonance}.

To precisely state our results, we consider the one-dimensional Schr{\"o}dinger operator
\begin{equation}
\label{eq:h}
    H := D_x^2+V(x) \qquad D_x := -i\partial_x,
\end{equation}
where the potential, $V$, is a real function. We make the following assumptions on the regularity of $V$.

\begin{assn}[Regularity and translation symmetry of $V$]
\label{assn:regularity}
    We assume that $V$ is smooth: $V\in C^\infty(\mathbb R)$. Then, we assume that $V$ can be written as
    \begin{equation}
    \label{eq:vform}
        V(x) = V_{\text{per}}(x) + V_{\text{def}}(x),
    \end{equation}
    where $V_{\text{per}}$ is periodic, i.e. $V_{\text{per}}(x+1) = V_{\text{per}}(x)$ for all $x\in\mathbb R$, and $V_\text{def}(x)$ is compactly supported, i.e. there exists $\rho\geq0$ such that 
    \begin{equation}
    \label{eq:compact}
        |x|>\rho \Longrightarrow V_\text{def}(x)=0.
    \end{equation}
\end{assn}

Now suppose that $H$ has a bound state with positive energy, i.e., that there exist $E>0$ and $\Phi(x)\in L^2(\mathbb R)$ such that 
\begin{equation}
\label{eq:boundstate}
    H\Phi(x) = E\Phi(x).
\end{equation}
Our goal is to prove that when the structure modeled by $H$ is truncated sufficiently far from $x=0$, a state of zero reflection, or transmission resonance, exists uniquely very nearby $E$. We then compare the location of this state to a previously found scattering resonance, which we denote as $\zx$ (described in \cite{LMW22}), and analyze reflection and transmission nearby. Specifically, we aim to prove that the operator
\begin{equation}
\label{eq:htrunc}
    H_{\text{trunc}} := D_x^2 + V_{\text{trunc}}(x), 
\end{equation}
where
\begin{equation}
\label{eq:vtrunc}
    V_{\text{trunc}}(x) = \begin{cases} V(x) & |x|\leq M \\ 0 & |x|>M \end{cases}
\end{equation}
acting on $L^2(\mathbb R)$ has a zero reflection state $\zy$ with $\im\zy<0$ nearby to $E$ in the complex plane for $M$ sufficiently large (such that $M>\rho$). To see this we define a set of fundamental solutions and use their growth and decay to describe the location of the zero reflection state and the behavior of transmission and reflection around this state.

\begin{rmk}
\label{rmk:Vpm}
    Our results extend to the case where we have do not have a perfectly periodic potential in the background, but instead where we have two distinct periodic potentials meeting at a defect edge.  That is to say, the case where given two distinct periodic potentials $V_{\text{per},\mp}$, as $x \to \mp \infty$ we have
        \begin{equation}
    \label{eq:vformleft}
        V(x) \to V_{\text{per},\mp}(x) .
    \end{equation}
    However, the extension to such a case, while not extremely mathematically complicated once the proof is understood, does introduce even more complicated notation in terms of left and right behaviors.  Hence, for the simplicity of exposition and to focus on the point at hand of non-uniform convergence, we write the arguments here only for the case where we have a single background periodic potential.
\end{rmk}

When $\Phi(0)\neq0$ (after multiplying by a constant we can assume $\Phi(0)=1$), define $\uz\in C^\infty([-M,M])$ and $\vz\in C^\infty([-M,M])$ as the solutions of
\begin{equation}
\begin{aligned}
\label{eq:fundamentals1}
    &(D_x^2+V(x)-z)\uz = 0, \quad \uz(0)=1, \quad \uz'(0) = w \\
    &(D_x^2+V(x)-z)\vz = 0, \quad \vz(0)=\frac{-w}{1+w^2}, \quad \vz'(0) = \frac{1}{1+w^2},
\end{aligned}
\end{equation}
for arbitrary $\zeta = (w,z)\in\mathbb C^2$, and let $\eta = (\Phi'(0),E)$. With these definitions $\uz$ and $\vz$ form a fundamental solution set with Wronskian $1$ and $\ue(x)=\Phi(x)$ for $|x|\leq M$. When $\Phi(0)=0$ (note after multiplying by a constant we can assume $\Phi'(0)=1$), define $\uz(x)$ and $\vz(x)$ by 
\begin{equation}
\begin{aligned}
\label{eq:fundamentals2}
    &(D_x^2+V(x)-z)\uz = 0, \quad \uz(0)=-w, \quad \uz'(0) = 1 \\
    &(D_x^2+V(x)-z)\vz = 0, \quad \vz(0)=\frac{-1}{1+w^2}, \quad \vz'(0) = \frac{-w}{1+w^2},
\end{aligned}
\end{equation}
and let $\eta = (0,E)$. Again, $\uz$ and $\vz$ form a fundamental solution set and $\ue(x)=\Phi(x)$ for $|x|\leq M$. Since $\Phi(x)$ is a bound state, using Floquet theory tools as in Sections $5$, $9$ of \cite{LMW22}, we have that $\ue$ exponentially decays in $|x|$ and $\ve$ is limited to exponential growth in $|x|$. That is, there exist constants $C>0$ and $k>0$ such that
\begin{equation}
\label{eq:uedecay}
    |\ue(x)| \leq Ce^{-k|x|}, \quad |\ue'(x)| \leq Ce^{-k|x|},
\end{equation}
and 
\begin{equation}
\label{eq:vegrowth}
    |\ve(x)| \leq Ce^{k|x|}, \quad |\ve'(x)| \leq Ce^{k|x|}.
\end{equation}

Also note that we can consider a scattering framework for solutions. That is, we can consider distorted plane waves $u(x)$ at complex energy $z$ which satisfy
\begin{equation}
    H_{\text{trunc}}u = zu
\end{equation}
for $H_{\text{trunc}}$ given by \cref{eq:htrunc} subject to the boundary conditions
\begin{equation}
\label{eq:scatteringsolutions}
    u(x) = \begin{cases}
        e^{i\sqrt{z}\,x} + R_- (z)e^{-i\sqrt{z}\,x}, & x \leq -M \\
        T(z)e^{i\sqrt{z}\,x}, & x \geq M
    \end{cases},
\end{equation}
where $R_-(z)$ and $T(z)$ are meromorphic functions which we call the reflection and transmission coefficients, respectively.  There is a similar set-up for $R_+$ where the incoming wave is from $+\infty$.  A scattering state coming from We will give more details about the structure of $R_\pm$ and $T$ as meromorphic functions in Section \ref{sec:comparison} below, but for now, we simply state that there exist analytic function $\hat{X}(z)$ and $\hat{Y} (z)$ such that $R_\pm(z) = \frac{\hat{Y}(\mp z)}{\hat{X}(z)}$ and hence must determine for which values of $z$ the function $\hat{Y}(\pm z) = 0$.  We will focus on the case $R(z) := R_-(z)$ for simplicity, though there is a clear modification for $R_+(z)$.  Thus, to define a zero-reflection state, or transmission resonance as a value $z$ such that ${\rm Im} z < 0$ and $R(z) = 0$.  This gives explicit behavior of the solution on the left and right, which very much mimics the construction of a scattering resonance.  Indeed, the construction in \cite{LMW22} constructs a $z$ such that $\hat{X}(z) = 0$.

Using these fundamental solutions and the scattering framework, we can introduce the main results:

\begin{theorem}
\label{result:zeroReflection}
Let $V(x)$ satisfy \cref{assn:regularity}, let $\rho>0$ as in \cref{eq:compact}, and $\Phi$ be a bound state with eigenvalue $E>0$ as in \cref{eq:boundstate}.  Then, for $k>0$ as defined in \eqref{eq:uedecay}-\eqref{eq:vegrowth} and $M_0>\rho\geq0$ such that for all $M\geq M_0$, there exists a unique zero reflection state (or transmission resonance) $\zy$ in an exponentially small neighborhood about the bound state. That is,
\begin{equation}
    \zy\in\Big\{ z : |z-E| \leq \frac{1}{M^2}e^{-kM} \Big\}.
\end{equation}
\end{theorem}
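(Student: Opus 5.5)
We reduce the zero-reflection condition to a scalar equation $\hat Y(z)=0$ near $E$, write $\hat Y$ explicitly in terms of the fundamental solutions $\uz,\vz$ at $x=\pm M$, and then locate its zero by a Rouché/contraction argument, in the spirit of the resonance construction for $\hat X$ in \cite{LMW22}.

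\textbf{Step 1: the function $\hat Y$.} A solution equal to $T e^{i\sqrt z x}$ for $x\ge M$ is $u=\alpha \uz+\beta\vz$ on $[-M,M]$. Matching $u'(M)=i\sqrt z\,u(M)$ gives, up to a nonvanishing factor,
\[
\alpha=\vz'(M)-i\sqrt z\,\vz(M),\qquad \beta=-\bigl(\uz'(M)-i\sqrt z\,\uz(M)\bigr).
\]
At $x=-M$ the incoming component is $e^{i\sqrt z x}$ and the reflected one is $e^{-i\sqrt z x}$. So $R(z)=0$ exactly when $u'(-M)=i\sqrt z\,u(-M)$. This yields
\[
\hat Y(\zeta)=\bigl(\uz'(-M)-i\sqrt z\uz(-M)\bigr)\bigl(\vz'(M)-i\sqrt z\vz(M)\bigr)-\bigl(\vz'(-M)-i\sqrt z\vz(-M)\bigr)\bigl(\uz'(M)-i\sqrt z\uz(M)\bigr).
\]
The variable $w$ is a free normalization parameter. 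Following \cite{LMW22}, I would either fix $w=\Phi'(0)$ (so $\zeta=(\Phi'(0),z)$) or treat $\hat Y$ as a function of $\zeta$ and set $\zeta$ near $\eta$. Because $\uz,\vz$ span the solution space for every $w$, the zero set in $z$ does not depend on the choice of $w$. Fixing $w$ therefore reduces the problem to one complex variable $z$.

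\textbf{Step 2: size of $\hat Y$ and its derivative at $E$.} At $z=E$ the bounds \eqref{eq:uedecay} and \eqref{eq:vegrowth} show that every term containing $\ue(\pm M)$ or $\ue'(\pm M)$ is $O(e^{-kM})$. Each such term is multiplied by a factor of size $O(e^{kM})$ from $\ve$. Hence $\hat Y(E)=O(1)$, which is not yet small. To get smallness I would rescale, as in \cite{LMW22}, by the two $\ve$ factors, i.e.\ consider $\hat Y/\bigl(\ve(M)\ve(-M)\bigr)$ or the equivalent quantity involving the boundary values.

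The derivative $\partial_z$ is computed by variation of parameters:
\[
\partial_z u_\zeta(x)=\int_0^x\bigl(u_\zeta(y)v_\zeta(x)-v_\zeta(y)u_\zeta(x)\bigr)u_\zeta(y)\,dy.
\]
The key integral is $\int_0^{\pm M}\ue^2\,dy\to\int\Phi^2\neq0$. Consequently $\partial_z\hat Y$, after the same normalization, is dominated by $\pm\|\Phi\|_{L^2}^2\,\ve(M)\ve(-M)$-type terms. This gives a lower bound of order $e^{2kM}$, against a value $\hat Y(E)$ of order $e^{kM}$ (before normalization). The ratio is $\hat Y(E)/\partial_z\hat Y(E)=O(e^{-kM})$. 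The factor $M^{-2}$ in the statement should come from choosing $M_0$ large, which absorbs the constants; the polynomial losses from $\partial_z^2$ bounds are at most $O(M^2)$.

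\textbf{Step 3: existence and uniqueness via Newton/Rouché.} Next I bound $\partial_z^2\hat Y$ on the disk $D=\{|z-E|\le M^{-2}e^{-kM}\}$ using Gronwall, i.e.\ growth of $\partial_z^j u_\zeta$ at most like $M^{j}e^{kM}$ uniformly on $D$. This lets me compare $\hat Y$ on $D$ with its linearization $L(z)=\hat Y(E)+\partial_z\hat Y(E)(z-E)$. On $\partial D$ one has $|\hat Y-L|<|L|$, so by Rouché $\hat Y$ has exactly one zero $\zy$ in $D$. One also checks $\im\zy<0$ from the sign of $\im$ in the leading correction, which comes from the $-i\sqrt z$ terms.

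The main obstacle is Step 2 together with the uniform Step 3 estimates. One must track precisely which products of $\ue$ and $\ve$ dominate and keep the Floquet growth bounds uniform for complex $z$ near $E$. Since $E$ lies in a spectral gap, the Floquet multipliers stay off the unit circle, which supports the uniform Gronwall/Floquet bounds of \cite{LMW22}, Sections 5 and 9. The remaining difficulty is ensuring $\partial_z\hat Y(E)$ is genuinely nondegenerate, with no cancellation between the $+M$ and $-M$ contributions.
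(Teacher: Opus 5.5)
Your route is viable and genuinely different from the paper's. The paper never eliminates the shooting parameter. It keeps $\zeta=(w,z)\in\mathbb{C}^2$, imposes both conditions $\Theta^\pm(\zeta)=0$ on the single solution $u_\zeta$, and runs the Newton-type contraction $\Psi(\zeta)=\zeta-\Xi\mathbf{\Theta}(\zeta)$ on a ball in $\mathbb{C}^2$, with $\Xi$ the inverse Jacobian at $\eta$. The work there is the lower bound on $\mathcal{N}(\eta)$ (\cref{lem:detbound}) and the second-derivative bounds (\cref{lem:thetaestimates}).

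You instead pass to the Evans-type determinant $\hat Y$ and apply Rouch\'e in the single variable $z$. In fact $\hat Y$ is independent of $w$: for different $w$, the Wronskian-one bases $(u_\zeta,v_\zeta)$ differ by a unimodular matrix. Your route buys three things:
\begin{itemize}
\item it avoids the $2\times2$ Jacobian bookkeeping, including mixed $\partial_w\partial_z$ terms;
\item it gives uniqueness directly among $z$ in the disk of the statement, rather than among $\zeta$ in a ball of $\mathbb{C}^2$;
\item it ties the result to the $\hat Y$ in $R=\hat Y/\hat X$, which the paper only invokes later.
\end{itemize}
Your Newton point $E-\hat Y(E)/\partial_z\hat Y(E)$ reproduces the leading term of \eqref{eq:zylocation}, whose numerator is exactly $\hat Y(E)$. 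The paper's route in exchange also produces $\wy$, the initial slope of the reflectionless state. Both rest on the same inputs: the Wronskian lower bound on $\ve$, and Duhamel estimates as in \cref{lem:nbdbounds}.

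As written, though, Step 2 contains an error that breaks Step 3, and two further points need fixing.

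\textbf{(i) The size of $\hat Y(E)$.} It is $O(1)$, as you first wrote, not of order $e^{kM}$. If the Newton step were $O(e^{-kM})$ it would not fit in a disk of radius $r:=M^{-2}e^{-kM}$, and a factor $M^{-2}$ cannot be absorbed by enlarging $M_0$. With the correct size the step is $O(e^{-2kM})$ and Rouch\'e closes with room. On $|z-E|=r$ you have $|L|\ge|\partial_z\hat Y(E)|\,r-|\hat Y(E)|\gtrsim M^{-2}e^{kM}$. Meanwhile $|\hat Y-L|\le\tfrac12 r^2\sup_{|z-E|\le r}|\partial_z^2\hat Y|\lesssim M^{-2}$, using $|\partial_z^j(\cdot)|\lesssim M^je^{kM}$ for the boundary data. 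Those uniform bounds must come from Duhamel perturbation around the $z=E$ propagator, which is valid since $|z-E|\,Me^{kM}\le M^{-1}$ on the disk. A raw Gronwall bound will not do, because its growth rate is unrelated to $k$.

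\textbf{(ii) Nondegeneracy of $\partial_z\hat Y(E)$.} The point you leave open is settled by direct computation. Set $a_\pm:=\ve'(\pm M)-i\sqrt{E}\,\ve(\pm M)$. Variation of parameters gives $\partial_z\hat Y(E)=\bigl(\int_{-M}^M\ue^2\,dy\bigr)a_+a_-+O(M)$, because the $\pm M$ contributions carry $\int_{-M}^0\ue^2$ and $\int_0^M\ue^2$ with the same sign. Also $|a_\pm|^2=\ve'(\pm M)^2+E\,\ve(\pm M)^2\gtrsim e^{2kM}$, by Cauchy--Schwarz on the Wronskian identity together with \eqref{eq:uedecay}, exactly as in \cref{lem:detbound}. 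Normalize by $a_+a_-$ if anything, not by $\ve(M)\ve(-M)$, which can vanish. Rescaling by a constant does not change the Newton ratio anyway.

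\textbf{(iii) The sign of $\im\zy$.} Drop the claim $\im\zy<0$: it is not part of the statement and is false in general. The leading imaginary part is $\sqrt{E}\,(|a_+|^2-|a_-|^2)/\bigl(\int_{-M}^M\ue^2\,dy\,|a_+|^2|a_-|^2\bigr)$ (compare \eqref{eq:ImaginaryPartY}). This has indefinite sign and vanishes at leading order for even $V$.
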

A precise statement and proof of \cref{result:zeroReflection} is given with \Cref{thm:zeroreflection}.

\begin{theorem}
\label{result:trBounds}
Let $V(x)$ satisfy \cref{assn:regularity}, let $R(z)$ be as in \cref{eq:scatteringsolutions}, let $\rho>0$ as in \cref{eq:compact}, and $\Phi$ be a bound state with eigenvalue $E>0$ as in \cref{eq:boundstate}. Let $\ue$ and $\ve$ be as in \cref{eq:fundamentals1,eq:fundamentals2} and let $k>0$ and $M_0$ as in \cref{result:zeroReflection}. Then for all $M\geq M_0$, whenever
\begin{equation}
\label{eq:distancequotient}
    \Bigg|\frac{[\ve'(M)]^2 + E[\ve(M)]^2}{[\ve'(-M)]^2 + E[\ve(-M)]^2}\Bigg| < 2
\end{equation}
there a exists a neighborhood $\Gamma_M$ of $\zy$ with 
\begin{equation}
\label{eq:gamma}
    \Gamma_M = \Big\{ z\in\mathbb{C} : |z-\zy| \leq |\im(\zx-\zy)|\Big\},
\end{equation}
where $R(z)$ is analytic in $\Gamma_M$ and the following estimates hold:
\begin{equation}
\label{eq:reflectionbound}
    \sup_{z\in\Gamma_M}|R(z)| \leq Ce^{-kM},
\end{equation}
and
\begin{equation}
\label{eq:dreflectiondbound}
\sup_{z\in\Gamma_M}|\partial_zR(z)| \leq Ce^{kM}.
\end{equation}
\end{theorem}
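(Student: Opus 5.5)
We plan to write $R(z)=\hat Y(z)/\hat X(z)$ explicitly in terms of the fundamental solutions $\uz,\vz$ evaluated at $x=\pm M$, with $\zeta=(w,z)$. Matching the scattering solution \cref{eq:scatteringsolutions} at $x=\pm M$ and setting $\kappa=\sqrt z$ (near $\sqrt E>0$), both $\hat X$ and $\hat Y$ become bilinear expressions in the boundary data $(u(\pm M),u'(\pm M))$ and $(v(\pm M),v'(\pm M))$. Schematically,
\[
\hat X(z)\propto \Theta^+(\zeta)\,\text{(outgoing at }+M)\ \text{paired with}\ \Theta^-(\zeta)\,\text{(outgoing at }-M),
\]
and $\hat Y$ is the analogous pairing with the left datum incoming. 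Here $\Theta^\pm(\zeta)$ denote the combinations $u'(\pm M)\mp i\kappa u(\pm M)$, and similarly for $v$, as in \cite{LMW22}. We then use the estimates \cref{eq:uedecay,eq:vegrowth}, together with analytic (Lipschitz) dependence of $\uz,\vz$ on $(w,z)$ on $[-M,M]$ (Gronwall gives $\partial_z$ and $\partial_w$ derivatives bounded by $CMe^{kM}$ times the relevant size), to expand $\hat X$ and $\hat Y$ near $\eta$. The leading term of both is $\Theta(\ve)$-type quantities at $\pm M$, of size $e^{2kM}$ after the normalization. The decaying $\ue$ contributes $O(e^{-kM})$ relative corrections. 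This is what produces the exponentially small scales.

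The key steps are as follows.

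\emph{Step 1.} By \cref{thm:zeroreflection}, $\hat Y(\zy)=0$ with $|\zy-E|\le M^{-2}e^{-kM}$. Likewise, by \cite{LMW22}, $\hat X(\zx)=0$ with $\zx$ in a comparable neighborhood.

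\emph{Step 2.} Show that $\zx\neq\zy$ and compute the leading-order difference. Linearizing $\hat X$ and $\hat Y$ around $\eta$, the first-order terms differ only by which side carries the incoming datum. This gives
\[
\zx-\zy\approx c\,\frac{\ue(\pm M)^2\text{-type terms}}{[\ve'(\mp M)]^2+E[\ve(\mp M)]^2},
\]
and the hypothesis \cref{eq:distancequotient}, a ratio less than $2$, is exactly what prevents the two contributions from the left and right ends from cancelling. As a result, $|\im(\zx-\zy)|$ is bounded below by a constant times the size of $|\zy-E|$, namely $\sim e^{-2kM}/(\text{normalization})$. In particular, $\zx\notin\Gamma_M$ after shrinking by a fixed factor, or on its boundary at worst. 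I would handle this by replacing the radius with half of it in the interior estimates, absorbing constants, so that $\hat X$ has no zero in $\Gamma_M$ and $R$ is analytic there.

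\emph{Step 3.} Estimate $R$ on $\Gamma_M$. Since $\hat X$ has a simple zero at $\zx$ and $\hat Y$ a simple zero at $\zy$, write
\[
R(z)=\frac{\hat Y(z)}{\hat X(z)}=\frac{\partial_z\hat Y(\tilde z)\,(z-\zy)}{\partial_z\hat X(\tilde z')\,(z-\zx)}.
\]
For $z\in\Gamma_M$ we have $|z-\zy|\le|\im(\zx-\zy)|\le|z-\zx|+\ldots$, so the ratio of distances is $O(1)$. It remains to bound $|\partial_z\hat Y/\partial_z\hat X|$. Both derivatives have leading term proportional to $\int\ue^2$ times a $\ve$-boundary factor. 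The difference between them comes from the incoming-vs-outgoing sign, which produces a factor of $\ue(\pm M)$ size, $O(e^{-kM})$. This gives \cref{eq:reflectionbound}. For \cref{eq:dreflectiondbound}, use Cauchy's estimate on a disc of radius comparable to $|\im(\zx-\zy)|$. Because that radius is itself exponentially small, it is cleaner to differentiate the quotient directly:
\[
\partial_zR=\frac{\partial_z\hat Y}{\hat X}-R\,\frac{\partial_z\hat X}{\hat X}.
\]
Then bound $|\hat X(z)|\gtrsim|\partial_z\hat X|\,|z-\zx|$ on $\Gamma_M$ and use the lower bound on $|z-\zx|$ from Step 2. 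This yields $e^{kM}$.

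The main obstacle is Step 2: obtaining a quantitative lower bound on $|\im(\zx-\zy)|$ and showing it has the right exponential order. This requires tracking second-order terms in the Taylor expansion of $\hat X$ and $\hat Y$ at $\eta$, with error controlled by $M$-polynomial factors; the $M^{-2}$ in \cref{result:zeroReflection} is meant to absorb these. My first attempt would be to expand in the variables $(w,z)$ exactly as in \cite{LMW22}, solving for $w$ via the implicit function theorem first and then for $z$, and reading off the imaginary parts from the explicit $\pm i\sqrt E$ factors at each end.
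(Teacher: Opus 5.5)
\textbf{There is a genuine gap in Step 3, and it cannot be repaired.} You claim $|\dz\hat Y/\dz\hat X|=O(e^{-kM})$ because the two derivatives differ only through the incoming/outgoing sign at $-M$. But a small \emph{difference} would make the \emph{ratio} close to $1$, not small, and in fact its modulus is $1+o(1)$.

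To see this, let $f_+,f_-,g_-$ solve $H_{\text{trunc}}f=zf$ with $f_+=e^{i\sqrt z x}$ on $x\ge M$, and $f_-=e^{-i\sqrt z x}$, $g_-=e^{i\sqrt z x}$ on $x\le -M$. Then $R=-W(f_+,g_-)/W(f_+,f_-)$ with $W(f,g)=fg'-f'g$. Expand these in the basis $\uz,\vz$ with $w$ frozen at its value in $\eta$, and use \cref{eq:dzuvop} as in the proof of \cref{lem:detbound}. With $I=\int_{-M}^M\ue^2\,dy$ and $A_\pm=\ve'(\pm M)-i\sqrt E\,\ve(\pm M)$, one gets for $|z-E|\lesssim e^{-2kM}$
\[
\dz W(f_+,g_-)=-I\,A_+A_-+O(M),\qquad \dz W(f_+,f_-)=-e^{2i\sqrt z M}\,I\,A_+\overline{A_-}+O(M).
\]
Here $\overline{A_-}=\ve'(-M)+i\sqrt E\,\ve(-M)$ because $\ve$ is real, so the two derivatives have equal modulus up to a factor $1+O(Me^{-2kM})$.

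Unitarity confirms this. For real $z$ with $|z-E|\gg e^{-2kM}$ (still in the range of \cref{lem:nbdbounds}) one has $|T|\ll 1$, hence $|R|\approx 1$, while $|z-\zy|/|z-\zx|\approx 1$. So on this scale
\[
R(z)=e^{i\theta}\,\frac{z-\zy}{z-\zx}\,\bigl(1+o(1)\bigr),
\]
which is a Breit--Wigner profile. With the correct ratio, your Step 3 gives only $|R|=O(1)$ on your half-radius disc. Your quotient-rule estimate gives $|\dz R|\lesssim 1/|z-\zx|\sim e^{2kM}$, not $e^{kM}$.

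\textbf{The problem lies with the target, not only your route.} By \cref{lem:locationcomparison}, \eqref{eq:vegrowth} and \eqref{eq:wronskianinequality2},
\[
|\zx-\zy|\approx|\im(\zx-\zy)|\approx \frac{2\sqrt E}{I\bigl([\ve'(-M)]^2+E[\ve(-M)]^2\bigr)},
\]
which is comparable to $e^{-2kM}$. This separation needs no hypothesis. The condition \eqref{eq:distancequotient} only ensures $|\im\zy|<|\im(\zx-\zy)|$, i.e.\ that $\Gamma_M$ reaches the real axis; it does not play the non-cancellation role you give it in Step 2. Consequently $|\dz R(\zy)|\approx 1/|\zx-\zy|\gtrsim e^{2kM}$. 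Also, at the interior point $\zy-\tfrac12(\zx-\zy)$ of $\Gamma_M$ one has $|R|\approx\tfrac13$. So \eqref{eq:reflectionbound} and \eqref{eq:dreflectiondbound} fail for the reflection coefficient of \eqref{eq:scatteringsolutions}. What does hold is $|R|\lesssim\delta/|\zx-\zy|$ and $|\dz R|\lesssim e^{2kM}$ on discs of radius $\delta\le c|\zx-\zy|$ about $\zy$. Hence $|R|\le Ce^{-kM}$ only for $\delta\lesssim e^{-3kM}$.

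The paper reaches the stated bounds by a different route that never involves $\hat X$. It writes $R(z)=\Theta^-(\zeta)e^{-i\sqrt zM}/(-2i\sqrt z)$ with $\uz$ normalized at $x=0$, bounds $\dz$ of the right-hand side by $Ce^{kM}$ via \cref{lem:nbdbounds}, and applies the mean-value inequality from $R(\zy)=0$ over a radius $\lesssim e^{-2kM}$. That identity holds only for the solution with $w=w(z)$ fixed by $\Theta^+=0$ and rescaled to unit incoming amplitude at $-M$. The rescaling factor is the reciprocal of an amplitude that vanishes at $\zx$ (it is the $1/\hat X$), and omitting it is exactly what loses the factor $e^{kM}$.

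You were right that \cref{cor:locationdistance} does not put $\zx$ outside $\Gamma_M$. Indeed $|\zx-\zy|\ge|\im(\zx-\zy)|$, with equality when the real parts coincide, and \cref{lem:locationcomparison} only gives $\re(\zx-\zy)=O(e^{-4kM})$.
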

A precise statement and proof of \cref{result:trBounds} is given with \Cref{thm:reflectionbounds}.

\subsection*{Acknowledgments}  The authors thank Alex Watson, Jianfeng Lu, Michael Weinstein and Mikael Rechtsman for helpful conversations throughout the development of this result.  We also wish to thank Kiril Datchev and Tanya Christiansen for helpful comments on an early version of the result. J.L.M. acknowledges support from NSF grant DMS-2307384. J.S. was supported by NSF RTG DMS-2135998 and the SMART Scholarship from the Department of Defense.

\section{States of Zero Reflection Near Eigenvalues of $H$}
\label{sec:reflectionPoint}
Recall our fundamental solutions defined in \cref{eq:fundamentals1,eq:fundamentals2}. We define
\begin{equation}
\label{eq:xmap}
    (X_1^\pm(\zeta),X_2^\pm(\zeta)) := (\uz(\pm M), \uz'(\pm M)),
\end{equation}
and let
\begin{equation}
\label{eq:thetamap}
    \Theta^\pm(\zeta):= X_2^\pm(\zeta) - i\sqrt{z}X_1^\pm(\zeta).
\end{equation}
Then we have that when $E>0$, $\zy$ is a zero reflection point of $H_\text{trunc}$ if and only if
\begin{equation}
\label{eq:Thetamap}
    \mathbf{\Theta}(\zetay)=0, \text{ where } \mathbf{\Theta}(\zeta) := \begin{pmatrix} \Theta^+(\zeta) \\ \Theta^-(\zeta) \end{pmatrix},
\end{equation}
where $\zetay = (\wy,\zy)$ for some $\wy\in\mathbb C$. We can now introduce the main theorem for this section, which is a precise statement of \cref{result:zeroReflection}:

\begin{theorem}
\label{thm:zeroreflection}
    Let $V(x)$ satisfy \cref{assn:regularity}, let $\rho>0$ be as in \cref{eq:compact}, and let $\Phi$ be a bound state with eigenvalue $E>0$ as in \cref{eq:boundstate}. Then, for $k>0$ as defined in \eqref{eq:uedecay}-\eqref{eq:vegrowth} and an $M_0>\rho\geq0$ such that for all $M\geq M_0$,
    \begin{itemize}
        \item[(1)] $\mathbf{\Theta}(\zeta)$ given by \cref{eq:Thetamap} has a unique zero $\zetay$ in the ball
        \begin{equation}
        \label{eq:bally}
            \Omega_M := \Big\{ \zeta : |\zeta-\eta| \leq \frac{1}{M^2}e^{-kM} \Big\}.
        \end{equation}
        \item[(2)] The location of $\zetay$ can be characterized as 
        \begin{equation}
        \label{eq:reflectionlocation}
            \zetay = \eta - \Xi\mathbf{\Theta}(\eta) + O(e^{-4kM}),
        \end{equation}
        where $\Xi$ is the matrix
        \begin{equation}
        \label{eq:ximatrix}
            \Xi := \frac{1}{\mathcal{N}(\eta)} \begin{pmatrix} \dztme & -\dztpe \\ -\dwtme & \dwtpe \end{pmatrix},
        \end{equation}
        where $\mathcal{N}(\eta) := \dwtpe\dztme - \dztpe\dwtme$.
    \end{itemize}
\end{theorem}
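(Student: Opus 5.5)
We will treat this as a Newton--Kantorovich (contraction) argument for the analytic map $\mathbf{\Theta}:\mathbb C^2\to\mathbb C^2$ near $\eta$, following the strategy used in \cite{LMW22} for the resonance map. Define
\[
\mathcal{F}(\zeta) := \zeta - \Xi\,\mathbf{\Theta}(\zeta),
\]
where $\Xi = (D\mathbf{\Theta}(\eta))^{-1}$ is exactly the matrix \eqref{eq:ximatrix}, since $D\mathbf{\Theta} = \begin{pmatrix}\partial_w\Theta^+ & \partial_z\Theta^+\\ \partial_w\Theta^- & \partial_z\Theta^-\end{pmatrix}$ acting on $(w,z)$. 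Zeros of $\mathbf{\Theta}$ in $\Omega_M$ are then the fixed points of $\mathcal F$. We show that $\mathcal F$ maps $\Omega_M$ into itself and is a contraction there. Uniqueness and existence in (1) then follow from the Banach fixed point theorem, and (2) follows by comparing the fixed point with $\mathcal F(\eta)$.

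\textbf{Step 1: size of $\mathbf{\Theta}(\eta)$.} Since $\ue=\Phi$ on $[-M,M]$, \eqref{eq:uedecay} gives $|X_j^\pm(\eta)|\le Ce^{-kM}$. Hence $|\mathbf{\Theta}(\eta)|\le C e^{-kM}$.

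\textbf{Step 2: derivatives.} We differentiate the ODE in $w$ and $z$. By the choice of normalization in \eqref{eq:fundamentals1}/\eqref{eq:fundamentals2}, $\partial_w\uz$ is the solution with data $(0,1)$ (respectively $(-1,0)$) at $0$, which is a combination of $\vz$ and $\uz$. Thus $\partial_w \Theta^\pm(\eta)$ is governed by $\ve(\pm M)$, $\ve'(\pm M)$, of size up to $e^{kM}$.

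For $\partial_z\uz$ we use variation of parameters, with Wronskian $1$:
\[
\partial_z u_\eta(x)=\int_0^x\big(\ue(x)\ve(y)-\ve(x)\ue(y)\big)\ue(y)\,dy .
\]
As $x\to\pm M$ this gives $\partial_z u_\eta(\pm M)\approx \mp\ve(\pm M)\int_0^{\pm\infty}\Phi^2 + O(Me^{-kM})$. The term $-\tfrac{i}{2\sqrt z}X_1^\pm$ is exponentially small.

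The crucial point is to show that $\mathcal N(\eta)$ is bounded below. Write $a_\pm=\ve'(\pm M)-i\sqrt E\,\ve(\pm M)$, so that $\partial_w\Theta^\pm\approx a_\pm$ and $\partial_z\Theta^\pm \approx \mp a_\pm\int_0^{\pm\infty}\Phi^2$. Then
\[
\mathcal N(\eta)\approx -a_+a_-\|\Phi\|_{L^2}^2,
\]
up to errors of size $O(Me^{-kM})\max(|a_+|,|a_-|)$. We need a lower bound $|a_\pm|\ge c\,e^{k'M}$, or at least $|a_\pm|\ge c$. This follows because $\ve$ is real at real $E$, so $|a_\pm|^2=\ve'^2+E\ve^2$. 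That quantity is bounded below: the Wronskian of $\ue,\ve$ is $1$ while $\ue$ decays, so $(\ve,\ve')(\pm M)$ grows like $e^{kM}$ in the Floquet-growing direction. Consequently $\|\Xi\|\lesssim |a_\pm|^{-1}\lesssim 1$, and $\|\Xi\mathbf\Theta(\eta)\|\le Ce^{-kM}\cdot$(possibly an extra $e^{-kM}$).

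\textbf{Step 3: contraction.} For $\zeta\in\Omega_M$ we have
\[
D\mathcal F(\zeta)=\Xi\big(D\mathbf\Theta(\eta)-D\mathbf\Theta(\zeta)\big).
\]
We bound the second derivatives of $\mathbf\Theta$ on $\Omega_M$ by $CM^2e^{kM}$ using Gronwall and the same variation-of-parameters representation; the $M^2$ factor comes from double integration. Then
\[
\|D\mathcal F\|\le \|\Xi\|\cdot CM^2e^{kM}\cdot M^{-2}e^{-kM}\le \tfrac12
\]
for large $M$, provided $\|\Xi\|\lesssim e^{-kM}$-type gains hold. Self-mapping then follows from $|\mathcal F(\zeta)-\eta|\le |\Xi\mathbf\Theta(\eta)|+\tfrac12|\zeta-\eta|$.

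\textbf{Step 4: location.} Quadratic Taylor remainder gives $\zetay-\mathcal F(\eta)=O(\|\Xi\|\,\|D^2\mathbf\Theta\|\,|\zetay-\eta|^2)$, which should be $O(e^{-4kM})$ once $|\zetay-\eta|=O(e^{-2kM})$ is established. This yields \eqref{eq:reflectionlocation}.

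\textbf{Main obstacle.} The step I expect to be hardest is the precise balancing of exponents in Steps 2--3. We must show that $\Xi$ actually gains a factor $e^{-kM}$; that is, $|a_+|,|a_-|\gtrsim e^{kM}$, which requires that $\ve$ genuinely grows at rate $k$ at both ends rather than merely being bounded by it. This must hold uniformly in $M$ despite oscillations of the Floquet solutions. I would first try to use the Floquet decomposition from Sections 5 and 9 of \cite{LMW22}, writing $\ve=c_\pm p_\pm(x)e^{\pm k x}+O(e^{-k|x|})$ with periodic $p_\pm$ that cannot vanish together with their derivatives. This would give the two-sided bound on $\ve'^2+E\ve^2$. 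If only a weaker bound holds, I would shrink the ball, adjusting the $M^{-2}$ prefactor accordingly.
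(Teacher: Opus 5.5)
Your argument follows the paper's: the Newton-type map $\mathcal F(\zeta)=\zeta-\Xi\mathbf{\Theta}(\zeta)$ with $\Xi=(D\mathbf{\Theta}(\eta))^{-1}$, a lower bound on $\mathcal N(\eta)$ from its leading term $\big(\int_{-M}^M\ue^2\,dy\big)a_+a_-$ (the sign is $+$, not $-$, which is harmless), variation-of-parameters bounds on the derivatives of $\mathbf{\Theta}$, and the Banach fixed point theorem. What you call the main obstacle is not one, and it needs no Floquet decomposition. It is settled by the Wronskian remark you already make in Step~2, applied pointwise. Since $\ue\ve'-\ue'\ve\equiv1$, Cauchy--Schwarz gives $1\le|(\ue,\ue')(\pm M)|\,|(\ve,\ve')(\pm M)|$. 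Then \eqref{eq:uedecay} forces $|(\ve,\ve')(\pm M)|\ge C^{-1}e^{kM}$ for every $M$, with the same $k$ and regardless of oscillations. As $\ve$ is real and $E>0$, $|a_\pm|^2\ge\min(1,E)\,C^{-2}e^{2kM}$. This is exactly the paper's \eqref{eq:wronskianinequality}--\eqref{eq:detlowerbound}. It gives $|\mathcal N(\eta)|\gtrsim e^{2kM}$ and $\|\Xi\|\lesssim 1/\min(|a_+|,|a_-|)\lesssim e^{-kM}$, and then Steps~3--4 close:
\begin{itemize}
\item $\|D\mathcal F\|\lesssim e^{-kM}$ on $\Omega_M$;
\item $|\Xi\mathbf{\Theta}(\eta)|\lesssim e^{-2kM}\ll M^{-2}e^{-kM}$, which gives self-mapping;
\item $|\zetay-\eta|\le 2|\Xi\mathbf{\Theta}(\eta)|\lesssim e^{-2kM}$.
\end{itemize}
You are right that the contraction needs the $e^{2kM}$ lower bound, not the $e^{kM}$ stated in \cref{lem:detbound}. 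Your explicit self-mapping check is a step the paper omits.

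The one claim your estimates do not support is the remainder in (2). With $\|\Xi\|\lesssim e^{-kM}$, $\sup_{\Omega_M}\|D^2\mathbf{\Theta}\|\lesssim M^2e^{kM}$ and $|\zetay-\eta|^2\lesssim e^{-4kM}$, the quadratic Taylor remainder gives $\zetay=\eta-\Xi\mathbf{\Theta}(\eta)+O(M^2e^{-4kM})$, not $O(e^{-4kM})$. The paper gets the clean exponent only from the $M$-free bound \eqref{eq:secondestimates}. Your Gronwall count does not reproduce it, and you should not simply import it. At $\zeta=\eta$ one has $\partial_z^2\ue(M)=2\ue(M)\int_0^M\ve\,\partial_z\ue\,dy-2\ve(M)\int_0^M\ue\,\partial_z\ue\,dy$. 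By \eqref{eq:dzuvop}, $\int_0^M\ue\,\partial_z\ue\,dy=-\big(\int_0^\infty\Phi^2\,dy\big)\int_0^M\ue\ve\,dy+O(1)$. This grows linearly in $M$ whenever $\ue\ve$ has nonzero average over a period; for a constant background $V_{\mathrm{per}}\equiv V_0>E$, for instance, $\ue\ve\to 1/(2k)$ with $k=\sqrt{V_0-E}$. So $|\partial_z^2\Theta^\pm(\eta)|$ is generically of order $Me^{kM}$, which contradicts \eqref{eq:secondestimates} at $\zeta=\eta$. A leading-order expansion of the $z$-component of the remainder suggests the loss does not cancel in general. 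It produces a term proportional to $M(\zy-E)^2$ times the energy derivative of the Floquet decay rate. You should therefore state (2) with a polynomial loss, $O(Me^{-4kM})$ with a sharp count, rather than assert $O(e^{-4kM})$.
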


The overall idea of the proof of \cref{thm:zeroreflection} is to construct a map so that a fixed point of this map is a point of zero reflection. We then prove that this map is a contraction and thus has a unique fixed point in $\Omega_M$.  We provide a sketch of the proof before introducing the necessary lemmas to prove \cref{thm:zeroreflection}. Define $\Psi:\mathbb C^2\rightarrow \mathbb C^2$ by
\begin{equation}
\label{eq:psimap}
    \Psi(\zeta) := \zeta - \Xi\mathbf{\Theta}(\zeta),
\end{equation}
where $\Xi$ is defined by \cref{eq:ximatrix}. Assuming $\mathcal{N}(\eta)\neq0$, then $\det\Xi=[\mathcal{N}(\eta)]^{-3}$ so that $\Xi$ is invertible, and hence
\begin{equation}
\label{eq:psitheta}
    \Psi(\zeta)=\zeta \Longleftrightarrow \mathbf{\Theta}(\zeta)=0.
\end{equation}
It remains to show that $\Psi$ is a contraction map in the ball $\Omega_M$ given by \cref{eq:bally}, where $k$ describes the exponential decay of the bound state. Parts one and two of \cref{thm:zeroreflection} then follow from the Banach fixed point theorem and the asymptotic formula for the fixed point as $\lim_{n\to\infty}\Psi^n(\eta)$ respectively. 

\begin{rmk}
\label{rmk:inverse}
    The matrix $\Xi$ is the inverse of 
    \begin{equation}
    \label{eq:xiinverse}
        \begin{pmatrix} \dwtpe & \dztpe \\ \dwtme & \dztme \end{pmatrix},
    \end{equation}
    which is the Jacobian of the map 
    \begin{equation}
    \label{eq:thetamapalt}
        \zeta \mapsto (\Theta^+(\zeta), \Theta^-(\zeta))
    \end{equation}
    evaluated at $\zeta=\eta$. This is analogous to a proof of resonance locations for parity-symmetric potentials presented in \cite{LMW22}. 
\end{rmk}

\begin{corollary}
\label{cor:reflectionasymptotics}
    Let $\zetay$ be as in \cref{eq:reflectionlocation}. Then
    \begin{equation}
    \begin{aligned}
    \label{eq:wylocation}
        \wy = w_0 &- \Big[\int_{-M}^0 \ue^2(y)\,dy \big(\ve'(-M)-i\sqrt{E}\ve(-M)\big) \big(\ue'(M)-i\sqrt{E}\ue(M)\big) \\ 
        &+ \int_0^M \ue^2(y)\,dy \big(\ve'(M)-i\sqrt(E)\ve(M)\big) \big(\ue'(-M)-i\sqrt(E)\ue(-M)\big)\Big] \\
        &\times \Big[\int_{-M}^M \ue^2(y)\,dy \big(\ve'(M)-i\sqrt{E}\ve(M)\big) \big(\ve'(-M)-i\sqrt{E}\ve(-M)\big)\Big]^{-1} \\ 
        &+ O(e^{-4kM}),
    \end{aligned}
    \end{equation}
    where $w_0=\Phi'(0)$ when $\Phi(0)=0$ and $w_0=0$ when $\Phi(0)=0$, and
    \begin{equation}
    \begin{aligned}
    \label{eq:zylocation}
        \zy = E &- \Big[\big(\ve'(M)-i\sqrt{E}\ve(M)\big) \big(\ue'(-M)-i\sqrt{E}\ue(-M)\big) \\
        &- \big(\ve'(-M)-i\sqrt{E}\ve(-M)\big) \big(\ue'(M)-i\sqrt{E}\ue(M)\big)\Big] \\
        &\times \Big[\int_{-M}^M \ue^2(y)\,dy \big(\ve'(M)-i\sqrt{E}\ve(M)\big) \big(\ve'(-M)-i\sqrt{E}\ve(-M)\big)\Big]^{-1} \\ 
        &+ O(e^{-4kM}).
    \end{aligned}
    \end{equation}
\end{corollary}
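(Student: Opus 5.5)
The plan is to evaluate the leading term $\eta - \Xi\mathbf{\Theta}(\eta)$ in \cref{eq:reflectionlocation} explicitly. For this I need the four partial derivatives $\partial_w\Theta^\pm(\eta)$, $\partial_z\Theta^\pm(\eta)$ expressed through $\ue,\ve$ and $\int\ue^2$. Once these are in hand, the corollary is a $2\times2$ linear-algebra computation, and the remainder stays $O(e^{-4kM})$ by \cref{thm:zeroreflection}.

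\textbf{Derivative in $w$.} Differentiating the initial conditions in \cref{eq:fundamentals1} (resp. \cref{eq:fundamentals2}) at $\zeta=\eta$ gives $\partial_w\uz|_{\eta} = c\,\ve$ for an explicit nonzero constant $c$. In case \cref{eq:fundamentals1}, $\partial_w\uz$ has data $(0,1)$ at $0$, while $\ve$ has data $(-w/(1+w^2),1/(1+w^2))$. So $\partial_w u$ is a combination of $\ue$ and $\ve$ obtained from the Wronskian identity $\partial_w u = W(\partial_w u,\ve)\,\ue - W(\partial_w u,\ue)\,\ve$; the $\ue$ component is exponentially small at $\pm M$ compared with the $\ve$ component. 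I will keep the exact decomposition, since the $\ue$ part may contribute the $w_0$-shift terms. This gives
\[
\partial_w\Theta^\pm(\eta)=\alpha\big(\ve'(\pm M)-i\sqrt{E}\ve(\pm M)\big)+\beta\big(\ue'(\pm M)-i\sqrt{E}\ue(\pm M)\big).
\]

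\textbf{Derivative in $z$.} Here $\dot u:=\partial_z\uz$ solves $(D_x^2+V-E)\dot u=\ue$ with zero data at $0$. Variation of parameters (Wronskian $1$) gives
\[
\dot u(x)=\ue(x)\int_0^x\ve\ue\,dy-\ve(x)\int_0^x\ue^2\,dy.
\]
Evaluating at $\pm M$ then gives $\partial_z\Theta^\pm(\eta)=-\big(\int_0^{\pm M}\ue^2\big)\big(\ve'-i\sqrt E\ve\big)(\pm M)+O(M e^{-kM})$, plus the term $-\frac{i}{2\sqrt E}\ue(\pm M)$ coming from differentiating $\sqrt z$. The main term is of size $e^{kM}$.

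\textbf{Assembly.} Write $A_\pm:=\ve'(\pm M)-i\sqrt E\ve(\pm M)$ and $B_\pm:=\ue'(\pm M)-i\sqrt E\ue(\pm M)$, so that $\mathbf{\Theta}(\eta)=(B_+,B_-)^T$. In $\mathcal N(\eta)=\partial_w\Theta^+\partial_z\Theta^- - \partial_z\Theta^+\partial_w\Theta^-$ the leading products combine via $\int_0^{M}+\int_{-M}^0$ (the sign from $\int_0^{-M}=-\int_{-M}^0$) into $\alpha\int_{-M}^M\ue^2\,A_+A_-$ up to lower-order errors. This is the denominator in both \cref{eq:wylocation} and \cref{eq:zylocation}.

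The $z$-component of $\Xi\mathbf\Theta(\eta)$ is $(-\partial_w\Theta^-B_++\partial_w\Theta^+B_-)/\mathcal N$, which gives $(A_+B_- - A_-B_+)/(\int\ue^2A_+A_-)$ after the constant $\alpha$ cancels. The $w$-component is $(\partial_z\Theta^-B_+-\partial_z\Theta^+B_-)/\mathcal N$, which produces the two half-line integrals paired with $B_\mp$ exactly as displayed.

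\textbf{Main obstacle.} The hard part is error bookkeeping. I must check that the discarded pieces are genuinely absorbed into $O(e^{-4kM})$ after division by $\mathcal N\sim e^{2kM}$. These pieces are the $\ue$-components, the $\int\ve\ue$ terms (of size $O(M)$ times $e^{-kM}$) and the $\partial_z\sqrt z$ terms. Relative to the main terms of order $e^{-2kM}$, such corrections look to be of order $Me^{-4kM}$ or smaller. If a polynomial factor survives, I would either keep those terms exactly or use the $M^{-2}$ slack built into $\Omega_M$. I also need to track the precise normalization constants in the two cases $\Phi(0)\neq0$ and $\Phi(0)=0$ to get the stated $w_0$.
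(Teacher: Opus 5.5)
This is essentially the paper's argument: substitute $\partial_w u_\zeta=\frac{w}{1+w^2}u_\zeta+v_\zeta$ and the variation-of-parameters formula for $\partial_z u_\zeta$ into $\Xi\mathbf{\Theta}(\eta)$, and keep the terms quadratic in $\ve$. The identity $\int_0^M\ue^2-\int_0^{-M}\ue^2=\int_{-M}^M\ue^2$ gives the leading part of $\mathcal{N}(\eta)$, and reading off the two components yields the stated formulas; you carry this out correctly and with more careful error bookkeeping than the paper. Two small remarks: $w_0$ is simply the $w$-coordinate of $\eta$, and it does not come from the $\ue$-part of $\partial_w u_\eta$ (coefficient $w_0/(1+w_0^2)$), which in fact cancels identically from the numerator of the $z$-component and enters $\mathcal{N}(\eta)$ only at lower order. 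Also, the $\int_0^{\pm M}\ue\ve$ terms are in general genuinely of size $M$, so the honest remainder is $O(Me^{-4kM})$, a factor the paper's $O(1)$ claim for $\mathcal{N}(\eta)$ glosses over; the $M^{-2}$ radius of $\Omega_M$ cannot remove it, so either keep those terms exactly or state the error with any $k'<k$.
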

\begin{proof}
    See \Cref{sec:reflectionasymptoticsproof}.
\end{proof}

\section{Proof of \Cref{thm:zeroreflection}}
\label{sec:reflectionproof}

We will first show that the form of $\Xi$ makes $\Psi$ a contraction in the ball $\Omega_M$. We write $\Psi = (\Psi_1,\Psi_2)^T$. Then we have that $\Psi$ is a contraction if every element of the Jacobian matrix
\begin{equation}
\label{eq:jacobian}
    J:= \begin{pmatrix} J_{11} & J_{12} \\ J_{21} & J_{22} \end{pmatrix} := \begin{pmatrix} \partial_w\Psi_1 & \partial_z\Psi_1 \\ \partial_w\Psi_2 & \partial_z\Psi_2 \end{pmatrix}
\end{equation}
can be bounded uniformly by $\frac{1}{2}$ in $\Omega_M$. Recalling the form of $\Psi$ \cref{eq:psimap} and $\Xi$ \cref{eq:ximatrix}, and assuming that $\Xi$ is independent of $\zeta$, we have the diagonal entries of $J$ given by
\begin{equation}
\begin{aligned}
\label{eq:jacobiandiagonals}
    J_{11} = 1 - \frac{\dwtpz\dztme - \dztpe\dwtme}{\mathcal{N}(\eta)}, \\
    J_{22} = 1 - \frac{\dwtpe\dztmz - \dztpz\dwtme}{\mathcal{N}(\eta)},
\end{aligned}
\end{equation}
while the off-diagonal terms are
\begin{equation}
\begin{aligned}
\label{eq:jacobianoffdiagonals}
    J_{12} = -\frac{\dztpz\dztme - \dztpe\dztmz}{\mathcal{N}(\eta)}, \\
    J_{21} = -\frac{\dwtpz\dwtme - \dwtpe\dwtmz}{\mathcal{N}(\eta)}.
\end{aligned}
\end{equation}
To see that this choice of $\Xi$ makes the entries of $J$ small for $\zeta\in\Omega_M$ and $M$ sufficiently large, we rewrite the diagonal terms of $J$ as
\begin{equation}
\begin{aligned}
\label{eq:jacobiandiagnoalsrewrite}
    J_{11} = \frac{1}{\mathcal{N}(\eta)}\big[\big(\dwtpe-\dwtpz\big)\dztme - \dztpe\big(\dwtme-\dwtmz\big)\big], \\
    J_{22} = \frac{1}{\mathcal{N}(\eta)}\big[\dwtpe\big(\dztme-\dztmz\big) - \big(\dztpe-\dztpz\big)\dwtme\big]
\end{aligned}
\end{equation}
and the off-diagonal terms of $J$ as 
\begin{equation}
\begin{aligned}
\label{eq:jacobianoffdiagonalsrewrite}
    J_{12} = \frac{1}{\mathcal{N}(\eta)}\big[\dztpe\big(\dztmz-\dztme\big) + \big(\dztpe-\dztpz\big)\dztme\big], \\
    J_{21} = \frac{1}{\mathcal{N}(\eta)}\big[\dwtpe\big(\dwtmz-\dwtme\big) - \big(\dwtpz-\dwtpe\big)\dwtme\big].
\end{aligned}
\end{equation}
We see that every component of $J$ is a sum of two terms, and we will show that each component can be bounded by $\frac{1}{2}$ by showing that each of these terms can be bounded by $\frac{1}{4}$. We give an example simplification of these terms before making the necessary estimates precise. For example, we aim to prove that
\begin{equation}
\label{eq:termbound}
    \Big|\frac{1}{\mathcal{N}(\eta)}\big(\dztpe-\dztpz\big)\dwtme\Big| < \frac{1}{4},
\end{equation}
or equivalently
\begin{equation}
\label{eq:termbound2}
    |\dztpe-\dztpz|\,|\dwtme| < \frac{1}{4}|\mathcal{N}(\eta)|
\end{equation}
for all $\zeta\in\Omega_M$. By an application of the mean value theorem, we have that
\begin{equation}
\label{eq:termboundmvt}
    |\dztpe-\dztpz|\,|\dwtme| \leq |\zeta-\eta|\sup_{\zeta\in\Omega_M}|\partial_z^2\Theta^+(\zeta)|
\end{equation}
for $\zeta\in\Omega_M$. Since $|\zeta-\eta|$ is exponentially small in M for $\zeta\in\Omega_M$, estimate \cref{eq:termbound} is proved if we can show that
\begin{equation}
\label{eq:exestimates}
    \sup_{\zeta\in\Omega_M}|\partial_z^2\Theta^+(\zeta)|\leq Ce^{kM}, |\dwtme|\leq Ce^{kM}, \text{ and } |\mathcal{N}(\eta)|\geq Ce^{kM}
\end{equation}
for constants $k>0$ and $C>0$. It is clear that with bounds similar to \cref{eq:exestimates} the other terms in \cref{eq:jacobiandiagnoalsrewrite,eq:jacobianoffdiagonalsrewrite} can be bounded along the same lines.

Now that the basic idea has been discussed, we present and prove the lemmas necessary to show that $\Psi$ is a contraction.

\begin{lemma}
\label{lem:detbound}
    Let $\Theta^\pm(\zeta)$ be as in \cref{eq:thetamap} and $k$ the decay factor associated to $V$ as given in \eqref{eq:uedecay}-\eqref{eq:vegrowth}. There exist constants $C>0$ and $M_0>\rho>0$ such that for all $M\geq M_0$, 
    \begin{equation}
    \label{eq:detdef}
        \mathcal{N}(\eta):= \dwtpe\dztme - \dztpe\dwtme
    \end{equation}
    satisfies
    \begin{equation}
    \label{eq:detbound}
        |\mathcal{N}(\eta)| \geq Ce^{kM}.
    \end{equation}
\end{lemma}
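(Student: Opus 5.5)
We will compute the four partial derivatives $\partial_w\Theta^\pm(\eta)$ and $\partial_z\Theta^\pm(\eta)$ explicitly in terms of $\ue,\ve$, and then isolate the dominant part of $\mathcal{N}(\eta)$.

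\textbf{The $w$-derivatives.} For \cref{eq:fundamentals1}, differentiating the initial data in $w$ shows that $\partial_w\uz$ solves the same ODE with data $(0,1)$. At $w=w_0$ this is a fixed combination of $\ue$ and $\ve$. In the normalization $\Phi(0)=1$ we get
\[
\partial_w\uz\big|_{\zeta=\eta}=(1+w_0^2)\ve+w_0\ue ,
\]
and the case \cref{eq:fundamentals2} is analogous. Hence
\[
\dwtpe=(1+w_0^2)\big(\ve'(\pm M)-i\sqrt{E}\,\ve(\pm M)\big)+O(e^{-kM}),
\]
and likewise for $\Theta^-$. By \cref{eq:uedecay} the $\ue$ contribution is $O(e^{-kM})$.

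\textbf{The $z$-derivatives.} Differentiating the ODE in $z$ gives $(D_x^2+V-E)\partial_z u=\ue$ with zero initial data. Variation of parameters (Wronskian $1$) then yields
\[
\partial_z\ue(x)=\ue(x)\int_0^x \ue\ve\,dy-\ve(x)\int_0^x\ue^2\,dy .
\]
Consequently
\[
\dztpe=-\Big(\int_0^{\pm M}\ue^2\Big)\big(\ve'(\pm M)-i\sqrt{E}\ve(\pm M)\big)+O(Me^{-kM})+O(e^{kM}\cdot 0)-\tfrac{i}{2\sqrt E}\ue(\pm M).
\]
Here the $\ue\int\ue\ve$ terms are $O(M e^{-kM})$ by \cref{eq:uedecay}--\eqref{eq:vegrowth}, and the derivative of $\sqrt z$ contributes only an $\ue$ term. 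Write $A_\pm:=\ve'(\pm M)-i\sqrt E\ve(\pm M)$.

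\textbf{Assembling $\mathcal{N}(\eta)$.} Substituting, the leading term is
\[
\mathcal{N}(\eta)=(1+w_0^2)A_+A_-\Big(-\int_{-M}^0\ue^2-\int_0^M\ue^2\Big)\cdot(\pm1)+O(Me^{kM}e^{-kM}\cdot\ldots).
\]
With the sign bookkeeping from $\int_0^{-M}=-\int_{-M}^0$, this becomes
\[
|\mathcal{N}(\eta)|=(1+w_0^2)\,|A_+A_-|\int_{-M}^M\ue^2\,dy+O(M),
\]
which is consistent with the denominator in \cref{cor:reflectionasymptotics}. Since $\int_{-M}^M\ue^2\to\int\Phi^2$, it remains to show $|A_+A_-|\ge ce^{2kM}$, since $e^{2kM}\ge e^{kM}$. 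We must also ensure $\int\Phi^2\ne0$. Since $V$ and $E$ are real we may take $\Phi$ real, so this integral is positive; the same reality gives real $\ue,\ve$.

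\textbf{Main obstacle: the lower bound on $A_\pm$.} Because $\ve$ is real, $|A_\pm|^2=\ve'(\pm M)^2+E\,\ve(\pm M)^2$. We need this to grow like $e^{2kM}$, not merely to be bounded above by it as in \eqref{eq:vegrowth}. The plan is to use the Floquet structure, as in Sections 5 and 9 of \cite{LMW22}. For $|x|>\rho$, $E$ lies in a spectral gap of the periodic operator, so there is a Floquet basis $p_\pm(x)e^{\mp\kappa x}$ with $p_\pm$ periodic and nonvanishing in the pair $(p,p')$. Here $\ue$ is a multiple of the decaying one. Since $W(\ue,\ve)=1$, $\ve$ has a nonzero component along the growing Floquet solution, giving $\ve(x)=c\,p(x)e^{\kappa|x|}+O(e^{-\kappa|x|})$ with $c\ne0$ on each side.

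Because $(p,p')$ never vanish simultaneously, $\ve'^2+E\ve^2\ge c'e^{2\kappa M}$. We take $k=\kappa$; if the given $k$ is smaller, the bound holds a fortiori. The constant $C$ and $M_0$ are then chosen so that the error terms are dominated.

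This Floquet lower bound is the delicate step. In particular, one has to check that the growth rate matching \eqref{eq:vegrowth} is the same $k$ used there, with any mismatch absorbed into constants.
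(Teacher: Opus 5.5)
Your proof is correct. The first half matches the paper's proof: you express $\partial_w u_\zeta$ and $\partial_z u_\zeta$ through $u_\eta,v_\eta$ by variation of parameters and extract the leading term $\big(\int_{-M}^M u_\eta^2\,dy\big)A_+A_-$. Your remainder bound $O(M)$ is more accurate than the paper's stated $O(1)$, because $\int_0^{\pm M}u_\eta v_\eta\,dy$ can grow linearly in $M$. There is one harmless slip: the initial data $(0,1)$ give $\partial_w u_\zeta=\frac{w}{1+w^2}u_\zeta+v_\zeta$, so the prefactor $1+w_0^2$ should not appear. It is a nonzero constant and changes nothing.

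The genuine difference is how you get the lower bound on $|A_\pm|$. You use the Floquet decomposition of $v_\eta$ for $|x|>\rho$. The condition $W(u_\eta,v_\eta)=1$ forces a nonzero coefficient on the growing Floquet solution, and the Floquet factor is uniformly nondegenerate over a period. This works. Your handling of the exponent is also right, since \eqref{eq:uedecay} already forces $k\le\kappa$. As a bonus, your route gives the sharper asymptotics $v_\eta\approx\gamma\,p\,e^{\kappa|x|}$.

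The paper gets the bound in two lines from the Wronskian alone. At $x=\pm M$, Cauchy--Schwarz gives $1=|u_\eta v_\eta'-u_\eta' v_\eta|\le(u_\eta^2+u_\eta'^2)^{1/2}(v_\eta^2+v_\eta'^2)^{1/2}$. Then \eqref{eq:uedecay} yields $(v_\eta^2+v_\eta'^2)^{1/2}(\pm M)\ge c\,e^{kM}$ with the same $k$. Finally, since $v_\eta$ is real, $|A_\pm|^2=v_\eta'^2+Ev_\eta^2\ge\min(1,E)\,(v_\eta^2+v_\eta'^2)$. This argument uses nothing about $v_\eta$ beyond $W=1$. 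The step you flag as delicate, matching $\kappa$ with $k$, never arises. It also applies verbatim whenever only a decay bound on $u_\eta$ is available, with no periodic structure needed.
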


\begin{lemma}
\label{lem:thetaestimates}
    Let $k$ be as in \cref{lem:detbound} and $\Omega_M$ as in \cref{eq:bally}. Then there exist positive constants $C$ and $M_0>\rho>0$ such that for all $M\geq M_0$ the following estimates hold
    \begin{equation}
    \label{eq:firstestimates}
        |\partial_z\Theta^\pm(\eta)| \leq Ce^{kM}, |\partial_w\Theta^\pm(\eta)| \leq Ce^{kM}
    \end{equation}
    and
    \begin{equation}
    \label{eq:secondestimates}
\sup_{\zeta\in\Omega_M}|\partial_z^2\Theta^\pm(\zeta)|\leq Ce^{kM}, \sup_{\zeta\in\Omega_M}|\partial_w^2\Theta^\pm(\zeta)|\leq Ce^{kM}.
    \end{equation}
\end{lemma}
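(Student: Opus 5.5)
The plan is to express all derivatives of $\Theta^\pm$ in terms of the fundamental pair $\ue,\ve$ at $x=\pm M$, and then read off the growth from \eqref{eq:uedecay}--\eqref{eq:vegrowth}. Write $\Theta^\pm(\zeta)=\uz'(\pm M)-i\sqrt z\,\uz(\pm M)$.

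\textbf{Step 1: the $w$-derivatives.} The initial data of $\uz$ in \cref{eq:fundamentals1} (respectively \cref{eq:fundamentals2}) depend affinely on $w$. Differentiating, $\partial_w\uz$ solves the same ODE with data $(0,1)$ (respectively $(-1,0)$). Since $\uz,\vz$ have Wronskian $1$, this solution is an explicit combination of the pair: $\partial_w\uz=(1+w^2)\vz+w\uz$ in the first normalization, and analogously in the second. Hence $\partial_w^2\uz=\partial_w[(1+w^2)\vz+w\uz]$, and the data of this function are constant in $w$ once computed, so it is again a fixed linear combination of $\uz,\vz$ with coefficients polynomial in $w$. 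Evaluating at $x=\pm M$ expresses $\partial_w\Theta^\pm$ and $\partial_w^2\Theta^\pm$ as combinations of $\uz(\pm M),\uz'(\pm M),\vz(\pm M),\vz'(\pm M)$ with bounded coefficients. At $\zeta=\eta$, \eqref{eq:uedecay}--\eqref{eq:vegrowth} then give $|\partial_w\Theta^\pm(\eta)|\le Ce^{kM}$.

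\textbf{Step 2: the $z$-derivatives, by variation of parameters.} Differentiating the ODE in $z$ gives $(D_x^2+V-z)\partial_z\uz=\uz$ with zero initial data, so
\[
\partial_z\uz(x)=\int_0^x\big(\uz(y)\vz(x)-\vz(y)\uz(x)\big)\uz(y)\,dy,
\]
possibly with a sign depending on convention. At $\zeta=\eta$ the integrand terms $\ue(y)^2\ve(x)$ and $\ue(y)\ve(y)\ue(x)$ are bounded by $Ce^{k|x|}$ and by $C|x|e^{-k|x|}$ respectively, using $|\ue\ve|\le C$. Thus $|\partial_z\ue(\pm M)|$ and the corresponding derivative bound are $\le Ce^{kM}$, with no polynomial loss because $\int\ue^2<\infty$. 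The extra term $-\tfrac{i}{2\sqrt z}\uz$ coming from differentiating $\sqrt z$ is harmless since $E>0$. This gives \eqref{eq:firstestimates}.

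\textbf{Step 3: the uniform second-derivative bounds on $\Omega_M$.} This is the main obstacle. The bounds \eqref{eq:uedecay}--\eqref{eq:vegrowth} hold only at $\eta$, and perturbing $\zeta$ could destroy the dichotomy. I would handle it with a Gronwall/Duhamel comparison on $[-M,M]$:
\[
u_\zeta=u_\eta+(w-w_0)(\cdot)+\int_0^x G_\eta(x,y)(z-E)u_\zeta(y)\,dy,
\]
where $|G_\eta(x,y)|\le Ce^{k(|x|-|y|)}+Ce^{-k(|x|-|y|)}$. Since $|z-E|\le M^{-2}e^{-kM}$, a bootstrap gives $|u_\zeta(x)|+|v_\zeta(x)|\le Ce^{k|x|}$ uniformly for $\zeta\in\Omega_M$ and $|x|\le M$. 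The worst kernel term contributes at most $M\cdot e^{kM}\cdot M^{-2}e^{-kM}\cdot Ce^{kM}$, which is small relative to $e^{kM}$ only up to a factor we can absorb because of the $M^{-2}$. Thus the pair $u_\zeta,v_\zeta$ is uniformly $O(e^{kM})$ on $\Omega_M$.

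For $\partial_z^2\uz$ we differentiate twice, getting $(D_x^2+V-z)\partial_z^2\uz=2\partial_z\uz$, and apply variation of parameters again. A naive bound yields $e^{kM}$ times polynomial or additional exponential factors. To get exactly $Ce^{kM}$, I would instead use Cauchy estimates: $\Theta^\pm$ is analytic in $z$, and on a disc of radius $r\sim1$ around $E$ its size is controlled. The cleanest route is to bound $\partial_z\Theta^\pm$ on a slightly larger ball by the Step 2 argument combined with the Step 3 perturbation bounds, then apply Cauchy's estimate on a disc of radius comparable to $M^{-2}e^{-kM}$. I expect this last bookkeeping to be delicate: the Cauchy radius must be large enough not to lose the exponent. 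If it fails, I would fall back to the direct Duhamel bound and check that the $e^{-k|y|}$ factor carried by $\ue$ in the source term keeps each iterated integral at $O(e^{kM})$.
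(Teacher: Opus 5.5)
\paragraph{Steps 1--2 are fine.} They are the paper's argument for \eqref{eq:firstestimates}: insert \eqref{eq:dwusolution} and the variation-of-parameters formula \eqref{eq:dzuvop} into \eqref{eq:thetaderivatives}, then use $\int\ue^2<\infty$ and $|\ue\ve|\le C$. Two small fixes are needed.
\begin{itemize}
\item The correct identity is $\dw\uz=\frac{w}{1+w^2}\uz+\vz$. Your combination $(1+w^2)\vz+w\uz$ has data $(0,1+w^2)$.
\item The data of $\uz$ are affine in $w$ and the equation does not involve $w$, so $\uz$ is affine in $w$. Hence $\dw^2\Theta^\pm\equiv0$.
\end{itemize}

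\paragraph{The gap.} The genuine gap is the $z$-half of \eqref{eq:secondestimates}, which you leave open. Neither route you sketch closes it.
\begin{itemize}
\item A Cauchy estimate on a disc of radius $r\sim M^{-2}e^{-kM}$ gives $r^{-1}\sup|\dz\Theta^\pm|\sim M^2e^{2kM}$, which loses a full exponential. Larger discs only help while the growth rate stays $k+O(r)$, and $r\sim 1/M$ still costs a factor $M^2$.
\item The fallback fails because the source for $\dz^2\uz$ is $2\dz\uz$, not $\uz$. Since $\dz\ue\approx-\ve(y)\int_0^y\ue^2$ grows like $e^{k|y|}$, there is no $e^{-k|y|}$ factor to exploit.
\end{itemize}

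\paragraph{The bound $Ce^{kM}$ is generically false.} No bookkeeping can produce it. Variation of parameters at $\zeta=\eta$ gives
\[
\dz^2\ue(x)=2\ve(x)\int_0^x\ue(s)\ve(s)\Big(\int_0^s\ue^2\Big)\,ds+O(e^{k|x|}).
\]
The same holds for $\dz^2\ue'$ with $\ve'$ in place of $\ve$, because the boundary terms cancel. Therefore
\[
\dz^2\Theta^+(\eta)=2\bar m\Big(\int_0^\infty\ue^2\Big)M\big(\ve'(M)-i\sqrt E\,\ve(M)\big)+O(e^{kM}),
\]
where $\bar m$ is the period average of $\ue\ve$ on $x>\rho$. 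To evaluate $\bar m$, let $\psi_+$ be the growing Floquet solution, with $\psi_+(x+1)=\lambda(E)\psi_+(x)$ and $|\lambda|=e^{k(E)}$. Differentiate this relation in $E$ and integrate $\frac{d}{dx}W(\psi_-,\partial_E\psi_+)=-\psi_-\psi_+$ over a period; this gives $\bar m=-k'(E)$. (Note that \eqref{eq:uedecay}--\eqref{eq:vegrowth} with a single $k$ force $k=k(E)$.) Now $k'(E)$ vanishes at only one energy per gap, and $|\ve'(M)-i\sqrt E\,\ve(M)|\gtrsim e^{kM}$ because $\ve$ is real. So generically $|\dz^2\Theta^+(\eta)|\asymp Me^{kM}$.

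The paper obtains its bound from \cref{lem:nbdbounds}, but \eqref{eq:estimate3} there is only asserted to follow ``similarly''. Run honestly, that Picard iteration also yields $C|x|e^{k|x|}$.

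\paragraph{What you can prove, and what suffices.} The provable bound is $\sup_{\Omega_M}|\dz^2\Theta^\pm|\le CMe^{kM}$.
\begin{enumerate}
\item Bootstrap, with your kernel $G_\eta$, the refined bound $|\uz(y)-\ue(y)|\le CM^{-2}e^{-kM}e^{k|y|}$. Your crude bound $|\uz|\le Ce^{k|y|}$ is not enough: it only gives $\int_0^M|\uz|^2\lesssim e^{2kM}$, so \eqref{eq:dzuvop} would give $|\dz\uz|\lesssim e^{3kM}$.
\item Deduce $\int_0^M|\uz|^2=O(1)$ and $|\dz\uz(y)|\le Ce^{k|y|}$ on $\Omega_M$.
\item One more Duhamel integral then produces exactly the factor $M$.
\end{enumerate}
The $M^{-2}$ in the radius of $\Omega_M$ absorbs this factor in the contraction argument, since $M^{-2}e^{-kM}\cdot CMe^{kM}\cdot Ce^{kM}=CM^{-1}e^{kM}\ll|\mathcal N(\eta)|$.
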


\subsection{Preliminaries for Proofs of \Cref{lem:detbound,lem:thetaestimates}}
\label{sub:preliminaries}
Using the definitions of $\Theta^\pm(\zeta)$ given by \cref{eq:xmap,eq:thetamap} we have that
\begin{equation}
\begin{aligned}
\label{eq:thetaderivatives}
    &\dz\Theta^\pm(\eta) = \dz\uz'(\pm M)|_{\zeta=\eta} - i\sqrt{E}\,\dz\uz(\pm M)|_{\zeta=\eta} - \frac{i}{2\sqrt{E}}\,\uz(\pm M)|_{\zeta=\eta}, \\
    &\dw\Theta^\pm(\eta) = \dw\uz'(\pm M)|_{\zeta=\eta} - i\sqrt{E}\,\dw\uz(\pm M)|_{\zeta=\eta}.
\end{aligned}
\end{equation}
Suppose that $\Phi(0)\neq0$ so that $\uz$ and $\vz$ are defined by \cref{eq:fundamentals1}. Differentiating the equation for $\uz$ we find that $\dw\uz$ solves
\begin{equation}
\label{eq:dwu1}
    (D_x^2 + V(x) - z)\dw\uz = 0, \quad \dw\uz(0)=0, \quad \dw\uz'(0)=1.
\end{equation}
Instead, if $\Phi(0)=0$ we have that $\uz$ and $\vz$ are defined by \cref{eq:fundamentals2} and $\dw\uz$ solves
\begin{equation}
\label{eq:dwu2}
    (D_x^2 + V(x) - z)\dw\uz = 0, \quad \dw\uz(0)=-1, \quad \dw\uz'(0)=0.
\end{equation}
In either case, by virtue of the convention we chose for $\uz$ and $\vz$ in \cref{eq:fundamentals1,eq:fundamentals2}, we have the unique solution
\begin{equation}
\label{eq:dwusolution}
    \dw\uz(x) = \frac{w}{1+w^2}\uz(x) + \vz(x).
\end{equation}

Furthermore, differentiating the equation that $\uz$ solves in $z$, we see that $\dz\uz$ satisfies
\begin{equation}
\label{eq:dzu}
    (D_x^2 + V(x) -z)\dz\uz = \uz, \quad \dz\uz(0)=0, \quad \dz\uz'(0)=0.
\end{equation}
Then by variation of parameters, we have that
\begin{equation}
\label{eq:dzuvop}
    \dz\uz(x) = \bigg(\int_0^x \vz(y)\uz(y)\,dy\bigg)\uz(x) - \bigg(\int_0^x \uz^2(y)\,dy\bigg)\vz(x).
\end{equation}
Setting $\zeta=\eta$, by Floquet theory as presented in Sections $5$ and $9$ of \cite{LMW22} and the assumption that $\ue(x) = \Phi(x)$ for $|x|\leq M$, recall we have that $\ue$ exponentially decays in $|x|$ and $\ve$ is limited to exponential growth in $|x|$ as in \cref{eq:uedecay}, \cref{eq:vegrowth}.
We can now give a proof of \cref{lem:thetaestimates}.

\subsection{Proof of \cref{lem:thetaestimates}}
\label{sub:pfthetaestimates}

The estimates \cref{eq:firstestimates} follow immediately from substituting \cref{eq:dwusolution,eq:dzuvop} into \cref{eq:thetaderivatives} and applying the estimates \cref{eq:uedecay,eq:vegrowth}.

To prove estimates \cref{eq:secondestimates}, we will need to bound $\dz^2\Theta^\pm(\zeta)$ uniformly in $z$ and $w$ in a ball centered at $\eta$. We will treat the $z$ estimates first. Differentiating $\Theta^\pm(\zeta)$ gives
\begin{equation}
\label{eq:dz2theta}
    \dz^2\Theta^\pm(\zeta) = \dz^2\uz'(\pm M) + \frac{1}{4}iz^{-3/2}\,\uz(\pm M) - iz^{-1/2}\,\dz\uz(\pm M) - iz^{1/2}\,\dz^2\uz(\pm M)
\end{equation}
and hence it suffices to bound $\dz^2\uz'(M)$, $\uz(M)$, $\dz\uz(M)$, and $\dz^2\uz(M)$ uniformly in $z$ for $z$ nearby $E$. We present and prove the following result, which is the key step in proving \cref{eq:secondestimates} of \cref{lem:thetaestimates} and follows similarly to estimates in Section $6$ of \cite{LMW22}:

\begin{lemma}
\label{lem:nbdbounds}
    Let $0 \neq x\in\mathbb R$ be arbitrary, and let $\zeta\in\mathbb C^2$ be such that
    \begin{equation}
    \label{eq:zetanbd}
        |\zeta-\eta| \leq \frac{1}{C|x|e^{k|x|}}
    \end{equation}
    where $k$ is as in \eqref{eq:uedecay}-\eqref{eq:vegrowth}.
    Then there exists a positive constant $C>0$ such that the following estimates hold
    \begin{align}
    \label{eq:estimate1}
      &  \max\{ |\uz(x)|,|\uz'(x)|\} \leq Ce^{k|x|}, \\
    \label{eq:estimate2}
       & \max\{ |\dz\uz(x)|,|\dz\uz'(x)|\} \leq Ce^{k|x|}
    \end{align}
    and
    \begin{equation}
    \label{eq:estimate3}
        \max\{ |\dz^2\uz(x)|,|\dz^2\uz'(x)|\} \leq Ce^{k|x|}.
    \end{equation}
\end{lemma}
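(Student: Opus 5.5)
Write $\mathbf{U}_\zeta(x) := (\uz(x),\uz'(x))^T$. It solves the first-order system $\mathbf{U}' = A_z(x)\mathbf{U}$ with
\[
A_z(x)=\begin{pmatrix}0&1\\ V(x)-z&0\end{pmatrix}.
\]
The plan is to compare $\mathbf{U}_\zeta$ with $\mathbf{U}_\eta=(\ue,\ue')^T$ and carry the known growth bounds \cref{eq:uedecay,eq:vegrowth} on the fundamental pair $(\ue,\ve)$ at $\zeta=\eta$ over to nearby $\zeta$. The natural tool is a Duhamel / variation-of-parameters identity.

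\textbf{Step 1: the bound \cref{eq:estimate1}.} The function $\uz$ solves
\[
(D_x^2+V-E)\uz=(z-E)\uz,
\]
with initial data at $0$ that differ from those of $\ue$ by $O(|w-w_0|)$. Writing the data difference in the basis $\ue,\ve$ and applying variation of parameters with the Wronskian-one pair $(\ue,\ve)$ gives
\begin{equation*}
\uz(x)=a\ue(x)+b\ve(x)+(z-E)\int_0^x\big(\ue(y)\ve(x)-\ve(y)\ue(x)\big)\uz(y)\,dy ,
\end{equation*}
where $|a-1|+|b|\lesssim|\zeta-\eta|$. A similar formula holds for $\uz'(x)$. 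By \cref{eq:uedecay,eq:vegrowth}, for $|y|\le|x|$ the kernel satisfies
\[
|\ue(y)\ve(x)-\ve(y)\ue(x)|\le C e^{k|x|}e^{-k|y|}+Ce^{k|y|}e^{-k|x|}\le C'e^{k(|x|-|y|)}.
\]
Set $f(y)=e^{-k|y|}|\mathbf U_\zeta(y)|$. Then
\[
f(x)\le C+C|\zeta-\eta|\int_0^{|x|} f ,
\]
and Gronwall's inequality gives $f\le C\exp(C|\zeta-\eta||x|)\le Ce^{1}$, using the hypothesis \cref{eq:zetanbd}. This is where the $1/(|x|e^{k|x|})$ radius is used, and only its $1/|x|$ part is actually needed here. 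This gives \cref{eq:estimate1}.

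\textbf{Step 2: the derivative bounds \cref{eq:estimate2,eq:estimate3}.} Differentiating in $z$ gives
\[
(D_x^2+V-z)\dz\uz=\uz,\qquad (D_x^2+V-z)\dz^2\uz=2\dz\uz,
\]
both with zero data at $0$, as in \cref{eq:dzu}. I would represent these by variation of parameters using the fundamental pair $(\uz,\vz)$ at $\zeta$, as in \cref{eq:dzuvop}. Here $\vz$ obeys the same bound as in Step 1, since the same argument applies with $\ve$ as reference solution. The dangerous terms are those like $\vz(x)\int_0^x\uz^2$. A naive estimate bounds $\int_0^x\uz^2$ by $e^{2k|x|}$, which loses exponentially.

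The fix is to refine Step 1 into a decomposition
\[
\uz=\ue+r_\zeta,\qquad |r_\zeta(y)|\lesssim |\zeta-\eta|\,|y|\,e^{k|y|}.
\]
This is read off from the Duhamel formula, since the $\ve$ part is driven by $|\zeta-\eta|$. Then
\[
\int_0^x\uz^2\lesssim \int_0^\infty\ue^2+|\zeta-\eta|^2|x|^2e^{2k|x|}=O(1),
\]
by \cref{eq:zetanbd}; this is exactly where the $e^{k|x|}$ factor in the radius is needed. Likewise $\int_0^x\vz\uz$ is bounded by $\int_0^{|x|}(e^{k|y|}e^{-k|y|}+|\zeta-\eta||y|e^{2k|y|})\,dy\lesssim |x|$.

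\textbf{Step 3: controlling the $\int_0^x\vz\uz$ term.} The linear-in-$|x|$ growth of $\int_0^x\vz\uz$ multiplies $\uz(x)$, and the plan is to show this product is harmless. I would again split $\uz(x)=\ue(x)+r_\zeta(x)$. The $\ue$ part decays, so $|x|\,|\ue(x)|$ stays bounded. The $r_\zeta$ part carries a small factor, so $|x|\,|r_\zeta(x)|\lesssim|x|^2|\zeta-\eta|e^{k|x|}\lesssim|x|e^{k|x|}$. Hence $|\dz\uz(x)|\lesssim e^{k|x|}$ up to polynomial factors. I expect this polynomial bookkeeping to be the main obstacle. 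Either the constant in \cref{eq:zetanbd} is tuned, or the polynomial factors are absorbed by slightly shrinking $k$, which is harmless for the later applications since the radius of $\Omega_M$ carries $M^{-2}$.

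\textbf{Step 4: the second derivative.} $\dz^2\uz$ is handled by the same formula with source $2\dz\uz$. The decomposition $\dz\uz=\dz\ue+(\text{small})$ is obtained the same way, and the same splitting bounds the resulting integrals. Derivative bounds in $x$ follow from the corresponding formulas for $(\cdot)'$.
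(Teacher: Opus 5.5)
Your Step~1 is correct, and sharper than the paper's argument. The paper runs a Neumann series with the propagator written as $U_\eta(x-y)$, which is not the propagator of a variable-coefficient system. The correct propagator, $U_\eta(x)U_\eta(y)^{-1}$, has your kernel $K_E(x,y)=\ue(y)\ve(x)-\ve(y)\ue(x)$ and its derivatives as entries. Your bound $|K_E(x,y)|\lesssim e^{k(|x|-|y|)}$ for $|y|\le|x|$ is what makes such an iteration close, and it only needs $|\zeta-\eta|\,|x|\lesssim 1$. Two small points. Like the paper, you tacitly need $|\zeta-\eta|\lesssim 1$ (e.g.\ $|x|\ge 1$), since as $x\to 0$ the hypothesis allows $|w|$ unbounded while $\uz'(x)\to w$. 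Also, $r_\zeta$ carries $(1+|y|)$ rather than $|y|$, because of the $b\ve$ term.

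The trouble is in Steps~2--4.
\begin{itemize}
\item The claim $\int_0^x\vz\uz\lesssim|x|$ is false. The $\vz r_\zeta$ contribution is of order $|\zeta-\eta|e^{2k|x|}$, i.e.\ up to $e^{k|x|}/|x|$ in the allowed ball.
\item For \cref{eq:estimate2} this is harmless. Multiply out $(\int_0^x\vz\uz)\uz(x)$ using $|\uz(x)|\lesssim e^{-k|x|}+|\zeta-\eta|\,|x|e^{k|x|}$ and the full radius. Every cross term is $O(e^{k|x|})$; the worst is $|\zeta-\eta|^2|x|^2e^{3k|x|}\lesssim e^{k|x|}$. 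So there is no polynomial loss for the first derivative; yours came from using only $|\zeta-\eta|\,|x|\lesssim 1$.
\item Step~4 fails as described. In $\dz^2\uz(x)=2\big[(\int_0^x\vz\dz\uz)\uz(x)-(\int_0^x\uz\dz\uz)\vz(x)\big]$, each term contains a piece of size $|\zeta-\eta|e^{3k|x|}$. This is because $\vz$ and $\dz\uz$ both grow like $\ve$, and $r_\zeta\approx c\,\ve$ with $|c|\lesssim|\zeta-\eta|$. These pieces cancel only against each other, so termwise bounds with your splitting give nothing better than $e^{2k|x|}/|x|$.
\end{itemize}

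The way out is not to split. Write $\dz\uz(x)=-\int_0^x K_z(x,y)\uz(y)\,dy$ and $\dz^2\uz(x)=-2\int_0^x K_z(x,y)\dz\uz(y)\,dy$, with $K_z(x,y)=\uz(y)\vz(x)-\vz(y)\uz(x)$. This kernel depends only on $z$. Your Step~1 Gronwall argument, started at $y$, gives $|K_z(x,y)|+|\partial_xK_z(x,y)|\lesssim e^{k(|x|-|y|)}$. With $\uz=\ue+r_\zeta$ this gives \cref{eq:estimate2} at once, needing only $|\zeta-\eta|(1+|x|)^2\lesssim 1$. For the second derivative it gives only $|\dz^2\uz(x)|\lesssim\int_0^{|x|}e^{k(|x|-s)}e^{ks}\,ds=|x|e^{k|x|}$.

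That extra $|x|$ cannot be removed, because \cref{eq:estimate3} is false as stated. The paper omits this case as ``so similar'', and the polynomial loss you anticipated is genuine there. Already at $\zeta=\eta$ one finds
\[
\dz^2\ue(x)=2\Big(\int_0^x\ue^2\Big)\Big(\int_0^x\ue\ve\Big)\ve(x)+O(e^{k|x|}).
\]
Outside the defect, $\ue\ve$ is asymptotically periodic, with period-average equal, up to sign, to $dk/dE$, the energy derivative of the Floquet exponent. This vanishes at only one energy per gap. So generically $\int_0^x\ue\ve$ grows linearly, and $|\dz^2\ue(x)|\gtrsim|x|e^{k|x|}$ along a sequence $|x|\to\infty$.

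The correct statement is $\max\{|\dz^2\uz(x)|,|\dz^2\uz'(x)|\}\le C(1+|x|)e^{k|x|}$. This suffices downstream, since the radius of $\Omega_M$ carries the factor $M^{-2}$.

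Your proposed remedy of ``slightly shrinking $k$'' goes the wrong way. The value of $k$ is pinned down by \cref{eq:uedecay,eq:vegrowth}, since the Wronskian forces $|\ve|+|\ve'|\gtrsim e^{k|x|}$. A smaller exponent in an upper bound is a stronger claim. You would have to enlarge it to $k+\varepsilon$, or better, keep the explicit factor $(1+|x|)$.
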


\begin{proof}
    We will prove assertion \cref{eq:estimate1}. Since the proofs of \cref{eq:estimate2,eq:estimate3} are so similar, we will omit them. We will use Picard iteration to solve \cref{eq:fundamentals1,eq:fundamentals2} perturbatively around $\zeta=\eta$. First rewrite \cref{eq:fundamentals1,eq:fundamentals2} as a first order system for $\vecu := (\uz(x),\uz'(x))^T$:
    \begin{equation}
    \label{eq:estimate1system}
        \vecu' = H(x)\vecu + \tilde{H}\vecu,
    \end{equation}
    where
    \begin{equation}
    \label{eq:hmatrices1}
        H(x) := \begin{pmatrix} 0&1 \\ V(x)-E & 0 \end{pmatrix}, \quad \tilde{H} := \begin{pmatrix} 0&0 \\ E-z & 0 \end{pmatrix}.
    \end{equation}
    Note that our initial data varies depending on whether $\Phi(0)\neq0$ or $\Phi(0)=0$, but both cases work in the following argument. Then when $\zeta=\eta$ so that $\tilde H=0$, \cref{eq:estimate1system} has the solution
    \begin{equation}
    \label{eq:estimate1solution}
        \vecu(x) = \Ue(x)\vecu(0)
    \end{equation}
    where $\Ue$ is the solution operator defined as
    \begin{equation}
    \label{eq:solutionoperator}
        \Ue(x) = \begin{pmatrix} \ue(x) & \ve(x) \\ \ue'(x) & \ve'(x) \end{pmatrix}.
    \end{equation}
    From our decay and growth bounds on $\ue$ and $\ve$ \cref{eq:uedecay,eq:vegrowth}, it is clear that there exists a constant $C>0$ such that
    \begin{equation}
    \begin{aligned}
    \label{eq:solutionoperatorbound}
        ||\Ue(x)\vec f||_\infty &= \sup\{ |\ue(x)f_1+\ve(x)f_2|, |\ue'(x)f_1+\ve'(x)f_2|\} \\
        &\leq 2\sup\{|\ue(x)|,|\ve(x)|,|\ue'(x)|,|\ve'(x)|\}\,||\vec f||_\infty \\
        &\leq Ce^{k|x|}\,||\vec f||_\infty.
    \end{aligned}
    \end{equation}
    We now solve \cref{eq:estimate1system} by Picard iteration for $|\zeta-\eta|$ small. By Duhamel's principle, we rewrite \cref{eq:estimate1system} as the fixed point equation
    \begin{equation}
    \label{eq:estimate1duhamel}
        \vecu(x) = \Ue(x)\vecu(0) + \int_{-x}^x \Ue(x-y)\tilde H\vecu(y)\,dy.
    \end{equation}
    For any $X\geq0$ we define the operator
    \begin{equation}
    \label{eq:estimate1operator}
        T_X\,:\, \vec f(x) \mapsto \Ue(x)\vec f(0) + \int_{-x}^x \Ue(x-y)\tilde H\vec f(y)\,dy
    \end{equation}
    acting on the Banach space
    \begin{equation}
    \label{eq:estimate1banach}
        B_x := \{ \vec f\in C([-X,X]:\mathbb C^2)\,:\, \vec f(0) = \vecu(0) \},
    \end{equation}
    with the $\sup$ norm. Recalling that $||\tilde H||_\infty$ is bounded by $|z-E|$, we have
    \begin{equation}
    \begin{aligned}
    \label{eq:estimate1contractioncondition}
        ||T_X\vec f(x) - T_X \vec g(x)||_\infty &= \bigg|\bigg| \int_{-x}^x \Ue(x-y)\tilde H (\vec f(y)-\vec g(y))\, dy \bigg|\bigg|_\infty \\
        &\leq 2X \sup_{-X\leq x\leq X} ||\Ue(x)||_\infty |z-E|\,||\vec f(y)-\vec g(y)||_\infty \\
        &\leq CXe^{kX}|z-E|\,||\vec f(y)-\vec g(y)||_\infty.
    \end{aligned}
    \end{equation}
    It follows that $T_X$ is a contraction as long as 
    \begin{equation}
    \label{eq:estimate1nbd}
        |\zeta-\eta| \leq \frac{1}{CXe^{kX}}.
    \end{equation}
    In this case we have a formula for $\vecu(x)$:
    \begin{equation}
    \label{eq:estimate1asymptotic}
        \vecu(x) = \lim_{n\to\infty}T^n \vec u_{\zeta,0},
    \end{equation}
    where $\vec u_{\zeta,0}$ denotes the function equal to the constant $\vecu(0)$ on the interval $[-X,X]$. Writing out \cref{eq:estimate1asymptotic}, we get
    \begin{equation}
    \begin{aligned}
    \label{eq:estimate1expansion1}
        \vecu(x) &= \Ue(x)\vecu(0) + \int_{-x}^x \Ue(x-y)\tilde H \Ue(y)\vecu(0)\,dy \\
        &+ \int_{-x}^x \Ue(x-y)\tilde H \int_{-y}^y \Ue(y-y_1)\tilde H \Ue(y_1)\vecu(0)\,dy_1\,dy + \dots
    \end{aligned}
    \end{equation}
    The second term on the right hand side of \cref{eq:estimate1expansion1} can be bounded by
    \begin{equation}
    \begin{aligned}
    \label{eq:estimate1expansionbound1}
        & \bigg| \int_{-x}^x \Ue(x-y)\tilde H \Ue(y)\vecu(0)\,dy \bigg| \leq 2|x| \sup_{-x\leq y\leq x}\Big|\Ue(x-y)\tilde H \Ue(y)\vecu(0)\Big| \\
        & \hspace{1cm} \leq C|x|e^{k|x|}|z-E|e^{k|x|} = C|x|\,|z-E| e^{2k|x|}.
    \end{aligned}
    \end{equation}
    Bounding the other terms of the right hand side in a similar fashion,
    we have that
    \begin{equation}
    \begin{aligned}
    \label{eq:estimate1expansionbound2}
        |\vecu(x)| & \leq Ce^{k|x|} + |x|\,|z-E|\,Ce^{2k|x|} + |x|^2|z-E|^2Ce^{3k|x|}+\dots \\
        &= Ce^{k|x|}(1+|x|\,|z-E|Ce^{k|x|} + |x|^2|z-E|^2Ce^{2k|x|} + \dots)
    \end{aligned}
    \end{equation}
    and we see that whenever $|x|\,|z-E|Ce^{k|x|} < 1$ we have that
    \begin{equation}
    \label{eq:estimate1solutionbound1}
        |\vecu(x)| \leq \frac{Ce^{k|x|}}{1-|x|\,|z-E|Ce^{k|x|}}.
    \end{equation}
    By taking $|\zeta-\eta|$ as in \cref{eq:estimate1nbd}, we have the desired bound
    \begin{equation}
    \label{eq:estimate1bound1}
        |\vecu(x)| \leq Ce^{k|x|}.
    \end{equation}

    To prove the other assertions \cref{eq:estimate2,eq:estimate3}, we take a similar process of rewriting the equations they solve as first order systems, considering contraction conditions for a solution operator on the same Banach space, and proving bounds of the contraction in the same neighborhood \cref{eq:estimate1nbd}. This completes the proof. 
\end{proof}

\begin{rmk}
\label{rmk:nbdboundconstants}
    In the proof presented, we redefine the constant $C$ throughout, absorbing known constants in these formulae. More precise statements on estimates \cref{eq:estimate1,eq:estimate2,eq:estimate3} can be made. See \cite{LMW22}, Section 6 for more information. 
\end{rmk}

We can now prove \cref{eq:secondestimates}. Note for $\Omega_M$, there exists an $M_0>0$ such that for all $M>M_0$, $\zeta\in\Omega_M$ implies that
\begin{equation}
\label{eq:znbd}
    |z-E| \leq \frac{1}{CMe^{kM}}.
\end{equation}
Applying \cref{lem:nbdbounds} to \cref{eq:dz2theta} and using the triangle inequality, we get
\begin{equation}
\label{eq:dz2thetatriangle}
    |\dz^2\Theta(\zeta)| \leq Ce^{kM} + C|z|^{-3/2}e^{kM} + |z|^{-1/2}e^{kM} + C|z|^{1/2}e^{kM}.
\end{equation}
Remarking that $|z|^{1/2} \leq \sqrt{E} + C|z-E|$, we have that
\begin{equation}
\label{eq:dz2thetaproved}
    |\dz^2\Theta(\zeta)|\leq Ce^{kM}
\end{equation}
for some constant $C>0$.

\subsection{Proof of \cref{lem:detbound}}
We directly set $\zeta=\eta$ and substitute \cref{eq:dwusolution,eq:dzuvop} into \cref{eq:detdef} for a long expression of $\mathcal{N}(\eta)$. Note that the terms depending quadratically on $\ve(x)$ will dominate for $M$ sufficiently large. After consideration of these terms and simplification, we get that
\begin{equation}
\label{eq:detexpansion}
    \mathcal{N}(\eta) = \int_{-M}^M \ue^2(y)\,dy \big(\ve'(M)-i\sqrt{E}\ve(M)\big) \big(\ve'(-M)-i\sqrt{E}\ve(-M)\big) + O(1).
\end{equation}
It follows that $\mathcal{N}(\eta) \neq 0$ as long as $\ve'(\pm M) - i\sqrt{E}\ve(\pm M)\neq0$ but this clearly holds since $\ve$ is purely real.

Recalling that the Wronskian of $\ue$ and $\ve$ is $1$, by the Cauchy-Schwarz inequality we have that 
\begin{equation}
\begin{aligned}
\label{eq:wronskianinequality}
    1 &= |\ue'(\pm M)\ve(\pm M) - \ue(\pm M)\ve(\pm M)| \\
    &\leq ([\ue(\pm M)]^2 + [\ue'(\pm M)]^2)^{1/2} ([\ve(\pm M)]^2 + [\ve'(\pm M)]^2)^{1/2}
\end{aligned}
\end{equation}
and plugging in \cref{eq:uedecay} we get
\begin{equation}
\begin{aligned}
\label{eq:wronskianinequality2}
    ([\ve(\pm M)]^2 + [\ve'(\pm M)]^2)^{1/2} \geq ([\ue(\pm M)]^2 + [\ue'(\pm M)]^2)^{-1/2} \geq Ce^{kM}.
\end{aligned}
\end{equation}
Hence,
\begin{equation}
\begin{aligned}
\label{eq:vlowerbounds}
    ([\ve(\pm M)]^2 + E[\ve'(\pm M)]^2)^{1/2} \geq Ce^{kM}.
\end{aligned}
\end{equation}
Now, considering \cref{eq:detexpansion} again, we see that
\begin{equation}
\begin{aligned}
\label{eq:detlowerbound}
    \mathcal{N}(\eta) &= \int_{-M}^M \ue^2(y)\,dy \big(\ve'(M)-i\sqrt{E}\ve(M)\big) \big(\ve'(-M)-i\sqrt{E}\ve(-M)\big)\\
    &\geq C\,|\ve'(M)-i\sqrt{E}\ve(M)|\,|\ve'(-M)-i\sqrt{E}\ve(-M)| \\
    &\geq Ce^{2kM},
\end{aligned}
\end{equation}
where the first line is up to $O(1)$ corrections. This proves \cref{lem:detbound}, with a stronger lower bound than necessary.

\section{Proof of \Cref{cor:reflectionasymptotics}}
\label{sec:reflectionasymptoticsproof}

To prove \cref{cor:reflectionasymptotics}, we need to simplify the correction term of \cref{eq:reflectionlocation} given by
\begin{equation}
\label{eq:reflectionasymptoticscorrection}
    \Xi\Theta(\eta) = \frac{1}{\mathcal N(\eta)} \begin{pmatrix} \dztme\Theta^+(\eta) - \dztpe\Theta^-(\eta) \\ \dwtpe\Theta^-(\eta) - \dwtme\Theta^+(\eta) \end{pmatrix}.
\end{equation}
Taking $\mathcal N(\eta)$ as in \cref{eq:detexpansion} and expanding the components of \cref{eq:reflectionasymptoticscorrection}, ignoring terms independent of $\ve$, and writing $\Xi\Theta(\eta) = ((\Xi\Theta(\eta))_1, (\Xi\Theta(\eta))_2)^T$ gives
\begin{equation}
\begin{aligned}
\label{eq:reflectioncorrection1}
    (\Xi\Theta(\eta))_1 &= \Big[\int_{-M}^0 \ue^2(y)\,dy \big(\ve'(-M)-i\sqrt{E}\ve(-M)\big) \big(\ue'(M)-i\sqrt{E}\ue(M)\big) \\ 
        &+ \int_0^M \ue^2(y)\,dy \big(\ve'(M)-i\sqrt(E)\ve(M)\big) \big(\ue'(-M)-i\sqrt(E)\ue(-M)\big)\Big] \\
        &\times \Big[\int_{-M}^M \ue^2(y)\,dy \big(\ve'(M)-i\sqrt{E}\ve(M)\big) \big(\ve'(-M)-i\sqrt{E}\ve(-M)\big)\Big]^{-1} \\ 
        &+ O(e^{-4kM})
\end{aligned}
\end{equation}
and
\begin{equation}
\begin{aligned}
\label{eq:reflectioncorrection2}
    (\Xi\Theta(\eta))_2 &= \Big[\big(\ve'(M)-i\sqrt{E}\ve(M)\big) \big(\ue'(-M)-i\sqrt{E}\ue(-M)\big) \\
        &- \big(\ve'(-M)-i\sqrt{E}\ve(-M)\big) \big(\ue'(M)-i\sqrt{E}\ue(M)\big)\Big] \\
        &\times \Big[\int_{-M}^M \ue^2(y)\,dy \big(\ve'(M)-i\sqrt{E}\ve(M)\big) \big(\ve'(-M)-i\sqrt{E}\ve(-M)\big)\Big]^{-1} \\ 
        &+ O(e^{-4kM}).
\end{aligned}
\end{equation}
Hence, we have proved \cref{eq:wylocation,eq:zylocation}, and thus \cref{cor:reflectionasymptotics}.

\section{A Comparison of State Locations}
\label{sec:comparison}

A comparison of the precise location of $\zy$ and the known resonance $\zx$ in the same neighborhood amounts to expansions of the formulae presented in \cref{cor:reflectionasymptotics} and the asymptotic expansion of the resonance location given by \cite{LMW22}. We state the main comparison of locations and prove them:

\begin{lemma}
\label{lem:locationcomparison}
    Take the scattering resonance $\zx$ and the point of zero reflection $\zy$ as found above in $\Omega_M$ defined by \cref{eq:bally}. Take $\ue$, $\ve$ as defined in \cref{eq:fundamentals1,eq:fundamentals2}. Then
    \begin{equation}
    \label{eq:realcomparison}
        \re\zy = \re\zx + O(e^{-4kM})
    \end{equation}
    and
    \begin{equation}
    \label{eq:imaginarycomparison}
        \im\zy = \im\zx + \frac{2\sqrt E}{\Big(\int_{-M}^M \ue^2(y)\,dy \Big)\big([\ve'(-M)]^2+E[\ve(-M)]^2\big)} + O(e^{-4kM}).
    \end{equation}
\end{lemma}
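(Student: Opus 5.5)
We compare the asymptotic formula \cref{eq:zylocation} of \cref{cor:reflectionasymptotics} with the resonance expansion of \cite{LMW22}. Write $a_\pm := \ue'(\pm M)-i\sqrt E\,\ue(\pm M)$, $b_\pm := \ve'(\pm M)-i\sqrt E\,\ve(\pm M)$, and $I := \int_{-M}^M \ue^2$.

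\textbf{Step 1: rewrite $\zy$.} By \cref{eq:zylocation},
\[
\zy = E - \frac{b_+a_- - b_-a_+}{I\,b_+b_-} + O(e^{-4kM}) = E - \frac{1}{I}\Big(\frac{a_-}{b_-} - \frac{a_+}{b_+}\Big) + O(e^{-4kM}).
\]

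\textbf{Step 2: recall $\zx$.} Outgoing conditions at both ends are $u'(M)-i\sqrt z\,u(M)=0$ and $u'(-M)+i\sqrt z\,u(-M)=0$. The resonance therefore solves the same fixed point problem with $\Theta^-$ replaced by $X_2^- + i\sqrt z X_1^-$. We take the resulting expansion from \cite{LMW22}, or rederive it by the identical contraction argument, as
\[
\zx = E - \frac1I\Big(\frac{\tilde a_-}{\tilde b_-} - \frac{a_+}{b_+}\Big) + O(e^{-4kM}),
\]
where $\tilde a_-=\ue'(-M)+i\sqrt E\,\ue(-M)$ and $\tilde b_- = \ve'(-M)+i\sqrt E\,\ve(-M) = \overline{b_-}$. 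This uses that $\ue,\ve$ are real, since $E$, $V$ and the data at $\eta$ are real.

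\textbf{Step 3: subtract.} This gives
\[
\zy-\zx = \frac1I\Big(\frac{\tilde a_-}{\overline{b_-}} - \frac{a_-}{b_-}\Big) + O(e^{-4kM}) = \frac{\tilde a_- b_- - a_-\overline{b_-}}{I\,|b_-|^2} + O(e^{-4kM}).
\]
Expand the numerator with $u=\ue(-M)$, $v=\ve(-M)$ and primes:
\[
\tilde a_- b_- - a_-\overline{b_-} = (u'+i\sqrt E u)(v'-i\sqrt E v) - (u'-i\sqrt E u)(v'+i\sqrt E v) = 2i\sqrt E\,(uv'-u'v) = 2i\sqrt E,
\]
using the Wronskian normalization $W(\ue,\ve)=1$. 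Since $|b_-|^2 = [\ve'(-M)]^2 + E[\ve(-M)]^2$ and $I$ is real,
\[
\zy - \zx = \frac{2i\sqrt E}{I\big([\ve'(-M)]^2+E[\ve(-M)]^2\big)} + O(e^{-4kM}).
\]
This quantity is purely imaginary up to the error. Taking real and imaginary parts yields \cref{eq:realcomparison} and \cref{eq:imaginarycomparison}. The leading term is of size $e^{-2kM}$ by \cref{eq:vlowerbounds}, so it genuinely dominates the error.

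\textbf{Main obstacle.} The delicate step is Step 2: producing an expansion for $\zx$ in exactly the same normalization, with the same $\ue,\ve$, the same $I$, and the same $O(e^{-4kM})$ error. Without this, the cancellation of the $a_+/b_+$ terms and of the real parts is not justified. The approach is to rerun the proof of \cref{thm:zeroreflection} verbatim for the outgoing map. The lemmas \cref{lem:detbound,lem:thetaestimates} carry over unchanged, since only the sign of $i\sqrt z$ at $-M$ changes and $\overline{b_-}\neq0$ has the same size. One must also check that the $O(1)$ and $e^{-2kM}$-relative corrections dropped in \cref{eq:detexpansion} and \cref{eq:reflectioncorrection2} contribute at most $O(e^{-4kM})$ in both expansions, and that their real parts agree to that order.
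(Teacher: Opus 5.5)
Your argument is correct, and it reaches the lemma by a cleaner route than the paper. The paper (Appendix~\ref{app:5pt1}) starts from the same two inputs: \eqref{eq:zylocation} for $\zy$ and the resonance expansion imported from \cite{LMW22} for $\zx$. It then rationalizes each one separately. It writes each location as $E-(a+ib)/(c+id)$ with real $a,b,c,d$ and multiplies out $ac+bd$, $ad-bc$ and $c^2+d^2$ term by term. This yields explicit closed forms for $\re\zx$, $\im\zx$, $\re\zy$ and $\im\zy$, and the lemma follows by comparing these four formulas. The Wronskian enters there only implicitly, in simplifying $ad-bc$.

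You subtract before expanding instead. Writing each location as $E-I^{-1}(\text{left ratio}-\text{right ratio})$ makes the identical right-boundary term $a_+/b_+$ cancel exactly. The remaining left-boundary discrepancy then collapses, via $W(\ue,\ve)=1$ and the reality of $\ue,\ve$, to $2i\sqrt E/(I|b_-|^2)$, which is manifestly purely imaginary. Your version explains structurally why the real parts agree to $O(e^{-4kM})$. It also shows why the imaginary shift involves only data at $x=-M$, the one end where the boundary conditions for $\zx$ and $\zy$ differ. And it avoids the long, error-prone expansions.

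What the paper's route buys is the individual formulas for $\re$ and $\im$ of each location. In particular, the expression for $\im\zy$ is used afterwards in \cref{cor:locationdistance}. Your approach gives only the difference, though the same Wronskian identity, $\im(a_\pm/b_\pm)=-\sqrt E/|b_\pm|^2$, recovers those formulas in one line.

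Your closing concerns about matching normalizations and error terms with \cite{LMW22} are legitimate, but they apply equally to the paper, which also takes that expansion as given. One small correction to an aside: \eqref{eq:vlowerbounds} only bounds the shift from above by $O(e^{-2kM})$. The matching lower bound, which shows it dominates the $O(e^{-4kM})$ error, comes from \eqref{eq:vegrowth}. This is not needed for the lemma itself.
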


\begin{proof}[Proof of \Cref{lem:locationcomparison}]
    We compare the asymptotic form of $\zy$ given by \cref{eq:zylocation} and the asymptotic form of the scattering resonance, $\zx$, given in \cite{LMW22} (In their notation $\zx = z^*$). This is done in Appendix \ref{app:5pt1}.
\end{proof}

In \cref{sec:behavior}, we will require estimates on the distance between $\zx$ and $\zy$. We have the following corollary:

\begin{corollary}
\label{cor:locationdistance}
    Take $\zy$ as found in \cref{thm:zeroreflection} and the known (\cite{LMW22}) scattering resonance $\zx$. Then 
    \begin{equation}
    \label{eq:resrefdistance}
        |\im\zy| < |\im(\zx-\zy)|
    \end{equation}
    whenever $M$ is sufficiently large and
    \begin{equation}
    \label{eq:distancequotient1}
        \Bigg|\frac{[\ve'(M)]^2 + E[\ve(M)]^2}{[\ve'(-M)]^2 + E[\ve(-M)]^2}\Bigg| < 2- e^{-3kM}.
    \end{equation} 
    holds.
\end{corollary}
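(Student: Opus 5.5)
The plan is to compute $\im\zy$ and $\im(\zx-\zy)$ to leading order in $e^{-2kM}$ and compare them directly. Write
\[
a_\pm := \ve'(\pm M)-i\sqrt{E}\,\ve(\pm M),\qquad b_\pm := \ue'(\pm M)-i\sqrt{E}\,\ue(\pm M),
\]
\[
Q_\pm := [\ve'(\pm M)]^2+E[\ve(\pm M)]^2 = |a_\pm|^2,\qquad I := \int_{-M}^M \ue^2(y)\,dy .
\]
Since $V$ and $E$ are real and the initial data at $\eta$ are real, $\ue$ and $\ve$ are real-valued. So $I>0$ is bounded above and below independently of $M$, and by \eqref{eq:vlowerbounds} together with \eqref{eq:vegrowth} we have $Q_\pm \asymp e^{2kM}$.

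\textbf{Step 1: leading imaginary part of $\zy$.} Starting from \eqref{eq:zylocation}, I split the correction as
\[
\frac{a_+b_- - a_-b_+}{I\,a_+a_-} \;=\; \frac{1}{I}\Big(\frac{b_-}{a_-}-\frac{b_+}{a_+}\Big) \;=\; \frac{1}{I}\Big(\frac{b_-\bar a_-}{Q_-}-\frac{b_+\bar a_+}{Q_+}\Big).
\]
Because $\ue,\ve$ are real, $\im(b_\pm\bar a_\pm)=\pm\sqrt{E}\,(\ue\ve'-\ue'\ve)(\pm M)$ up to a fixed sign. The Wronskian equals $1$, so $\im(b_\pm\bar a_\pm)$ is exactly a constant multiple of $\sqrt{E}$. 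This gives
\[
\im\zy = c\,\frac{\sqrt E}{I}\Big(\frac{1}{Q_-}-\frac{1}{Q_+}\Big)+O(e^{-4kM})
\]
for an explicit constant $c$ (I expect $|c|=2$ once the factor of $2$ hidden in the $\partial_z$ of $\sqrt z$ and in the normalization of $\mathcal N$ is tracked, though this constant and the orientation $\pm$ must be checked carefully against \eqref{eq:imaginarycomparison}).

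\textbf{Step 2: the comparison.} From \cref{lem:locationcomparison},
\[
|\im(\zx-\zy)| = \frac{2\sqrt E}{I\,Q_-}+O(e^{-4kM}),
\]
which is of exact order $e^{-2kM}$. After multiplying through by $I\,Q_-/(2\sqrt E)$, the desired inequality \eqref{eq:resrefdistance} becomes
\[
\Big|1-\frac{Q_-}{Q_+}\Big| < 1 + O(e^{-2kM}),
\]
up to the side convention fixed in Step 1. That convention determines whether the quotient appears as $Q_+/Q_-$ or $Q_-/Q_+$, and I would align it with \eqref{eq:distancequotient1}. Since $Q_\pm>0$, the absolute value in \eqref{eq:distancequotient1} is harmless, and the condition reduces to a ratio being less than $2$.

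\textbf{Step 3: absorbing the error terms.} The $O(e^{-4kM})$ errors, divided by quantities of order $e^{-2kM}$, contribute $O(e^{-2kM})$ to the normalized inequality. The hypothesis asks for ratio $<2-e^{-3kM}$, which gives only an $e^{-3kM}$ margin. That margin is smaller than $e^{-2kM}$, so this is the main obstacle. I would first try to show that the relevant error is actually better: the real parts agree to $O(e^{-4kM})$, and the imaginary-part errors are of the form $O(e^{-4kM})\cdot e^{2kM}$ relative to the main terms. Equivalently, I would use the precise bound $|\im \zy|\le \tfrac{\sqrt E}{I}\,\tfrac{|Q_+-Q_-|}{Q_+Q_-}$, which is $O(e^{-2kM})$ times the ratio deviation. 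If that fails, I would take $M$ large and accept that the statement effectively requires the ratio to be bounded away from $2$ by more than the error, interpreting the margin as $e^{-3kM}$ relative to the unnormalized quantities, which is consistent with errors of size $O(e^{-4kM})\ll e^{-3kM}\cdot e^{-2kM}$ only after a careful reexpansion of \eqref{eq:zylocation} to next order.
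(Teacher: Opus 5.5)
There is a genuine gap. Your route (isolate the leading $e^{-2kM}$ parts of $\im\zy$ and $\im(\zx-\zy)$, then compare) is the same as the paper's. However, Steps 1--2 leave undetermined the two things that decide the answer: the constant $c$ and the orientation of the quotient. You then propose to fix both by matching \eqref{eq:distancequotient1} rather than by computing them.

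Neither is free. Since $\ue,\ve$ are real and $\ue\ve'-\ue'\ve=1$, you have $b_\pm\bar a_\pm=(\ue'\ve'+E\ue\ve)(\pm M)-i\sqrt E$ at both ends. Hence $\im(b_\pm/a_\pm)=-\sqrt E/Q_\pm$, and \eqref{eq:zylocation} gives
\[
\im\zy=\frac{\sqrt E}{I}\Big(\frac{1}{Q_-}-\frac{1}{Q_+}\Big)+O(e^{-4kM}).
\]
So $c=1$, in agreement with \eqref{eq:ImaginaryPartY}. There is no hidden factor of $2$: the $\partial_z\sqrt z$ term and the normalization of $\mathcal N(\eta)$ are already built into \eqref{eq:zylocation}. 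A direct first-order check confirms this: take $u\approx\ue+\big(\alpha-(z-E)\int_0^x\ue^2\big)\ve$ near $\pm M$ and impose $u'=i\sqrt z\,u$ at both ends, and the same formula comes out.

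The orientation is also fixed, by \cref{lem:locationcomparison}: $|\im(\zx-\zy)|=2\sqrt E/(IQ_-)+O(e^{-4kM})$ involves $Q_-$. Even granting your guess $|c|=2$, Step 2 would give $Q_-/Q_+<2$, not the quotient in \eqref{eq:distancequotient1}. With $c=1$, the normalized inequality is $|1-Q_-/Q_+|<2+O(e^{-2kM})$, i.e.\ essentially $Q_-/Q_+<3$. Hypothesis \eqref{eq:distancequotient1} does not imply this. For example, if $Q_+/Q_-=1/10$, then \eqref{eq:distancequotient1} holds but $|\im\zy|\approx\frac{9}{2}|\im(\zx-\zy)|$.

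So no bookkeeping will make Step 2 land on the stated hypothesis. The paper's proof does not supply this step either. Its display \eqref{eq:imaginaryrelfection} simplifies to $\sqrt E/(IQ_-)$, which contradicts \eqref{eq:ImaginaryPartY}: for parity-symmetric $V$ one must have $\im\zy=O(e^{-4kM})$. Do not reverse-engineer $c$ from it.

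Step 3 is a second, independent gap, which you diagnose correctly but do not repair. After normalizing by $IQ_-/\sqrt E\asymp e^{2kM}$, the remainders become $O(e^{-2kM})$, which swamps a margin of $e^{-3kM}$. Your proposed bound $|\im\zy|\le\frac{\sqrt E}{I}\frac{|Q_+-Q_-|}{Q_+Q_-}$ simply discards the remainder in \eqref{eq:zylocation}. \cref{thm:zeroreflection} controls that remainder only as $O(e^{-4kM})$, and nothing available sharpens it. The fallback about a next-order reexpansion is not an argument.

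What your computation does prove is a corrected statement. Suppose $Q_-/Q_+\le 3-\delta$ for some fixed $\delta\in(0,1]$, or more generally with a margin of at least $Ce^{-2kM}$ for $C$ large. Then
\[
|\im\zy|\le\frac{(2-\delta)\sqrt E}{IQ_-}+Ce^{-4kM}<\frac{2\sqrt E}{IQ_-}-Ce^{-4kM}\le|\im(\zx-\zy)|
\]
for $M$ large, because $Q_-\le Ce^{2kM}$ by \eqref{eq:vegrowth} and $I$ is bounded. This covers the parity-symmetric case, but it is a different condition from \eqref{eq:distancequotient1}.
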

\begin{proof}
    By \cref{eq:imaginarycomparison} we have that
    \begin{equation}
    \label{eq:imaginarydifference}
        |\im(\zx-\zy)| = \frac{2\sqrt E}{\Big(\int_{-M}^M \ue^2(y)\,dy \Big)\big([\ve'(-M)]^2+E[\ve(-M)]^2\big)} + O(e^{-4kM}).
    \end{equation}
    Simplifying and factoring \cref{eq:zylocation} we also get that
    \begin{align}
    \label{eq:imaginaryrelfection}
        |\im\zy| &= \frac{\sqrt E}{\bigg(\int_{-M}^M\ue^2(y)\,dy\bigg) \big([\ve'(M)]^2 + E[\ve(M)]^2\big)} \, \Bigg|\frac{[\ve'(M)]^2 + E[\ve(M)]^2}{[\ve'(-M)]^2 + E[\ve(-M)]^2}\Bigg| \\
        &+ O(e^{-4kM}). \notag
    \end{align}
  Thus \cref{eq:resrefdistance} holds when \cref{eq:distancequotient1} is satisfied.
\end{proof}
\begin{rmk}
    We can note some cases for which \cref{eq:distancequotient1} holds. Firstly, the class of parity-symmetric, periodic potentials that satisfy \cref{assn:regularity} and $V(-x)=V(x)$. In this case, the quotient \cref{eq:distancequotient1} is exactly 1, since the fundamental solutions $\ue$ and $\ve$ are even or odd.  The condition we have here is purely from our method of proof, we do not make any claims it is sharp.
\end{rmk}

\section{Behavior of Reflection and Transmission Near Eigenvalues of $H$}
\label{sec:behavior}
We can now discuss the behavior of transmission and reflection coefficients near our bound state $E$, where we restrict the domain of transmission and reflection to real energies, in line with their physical interpretations. We will assume the class of potentials that satisfy \cref{eq:distancequotient1} to complete our analysis.

\begin{theorem}
\label{thm:reflectionbounds}
Let $V(x)$ satisfy \Cref{assn:regularity}, let $R(z)$ be the reflection coefficient defined in \cref{eq:scatteringsolutions}, let $\ue$, $\ve$ as in the proof of \Cref{thm:zeroreflection}. Then whenever \cref{eq:distancequotient1} is true, there exists a ball $\Gamma_M$ with
\begin{equation}
\label{eq:gamma1}
    \Gamma_M = \Big\{ z\in\mathbb{C} : |z-\zy| \leq |\im(\zx-\zy)|\Big\},
\end{equation}
where $R(z)$ is analytic in $\Gamma_M$ and the following estimates hold:
\begin{equation}
\label{eq:reflectionbound1}
    \sup_{z\in\Gamma_M}|R(z)| \leq Ce^{-kM}
\end{equation}
and
\begin{equation}
\label{eq:dreflectiondbound1}
    \sup_{z\in\Gamma_M}|\partial_zR(z)| \leq Ce^{kM}.
\end{equation}
\end{theorem}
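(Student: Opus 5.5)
The plan is to write $R(z)=\hat Y(z)/\hat X(z)$ explicitly in terms of the fundamental solutions, use the simple zeros $\zy$ of $\hat Y$ and $\zx$ of $\hat X$, and derive the bounds from a factorization near $E$. First we make the representation concrete. For $z$ near $E$ we take the solution $u$ of $H_{\text{trunc}}u=zu$ that equals $T e^{i\sqrt z x}$ for $x\ge M$ and expand it in the basis $\uz,\vz$ with $\zeta=(w,z)$ and $w$ near $\Phi'(0)$ fixed. Matching Cauchy data at $\pm M$ gives, up to a common nonvanishing factor,
\[
\hat X(z)=\mathcal W\big[u_{\text{out},+},u_{\text{out},-}\big],\qquad \hat Y(z)=\mathcal W\big[u_{\text{out},+},u_{\text{in},-}\big],
\]
where $u_{\text{out},\pm}$ are the solutions with data $(1,\pm i\sqrt z)$ at $\pm M$, and $u_{\text{in},-}$ is the solution with data $(1,i\sqrt z)$ at $-M$. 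These Wronskians are bilinear expressions in $\Theta^\pm$ and the analogous quantities built from $\vz$. Both functions are analytic in $z$ near $E$, since $\uz$ and $\vz$ are entire in $z$ and $\sqrt z$ is analytic near $E>0$. The zero $\zy$ of $\hat Y$ is the one produced by \cref{thm:zeroreflection}, and the zero $\zx$ of $\hat X$ is the resonance of \cite{LMW22}. Uniqueness in \cref{thm:zeroreflection} and the analogous statement in \cite{LMW22} show that each function has exactly one zero in $\Omega_M$, and each zero is simple because the derivative there is comparable to $\mathcal N(\eta)\neq 0$ by \cref{lem:detbound}.

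Second, we establish analyticity. \Cref{cor:locationdistance} gives $|\im \zy|<|\im(\zx-\zy)|$ under \cref{eq:distancequotient1}, and \cref{lem:locationcomparison} gives $\re\zx=\re\zy+O(e^{-4kM})$. Together these show that $\zx$ lies outside or on the boundary of $\Gamma_M$, and to exclude the boundary we shrink $\Gamma_M$ by a factor $1-\epsilon$ if needed. Since $|\im(\zx-\zy)|\sim e^{-2kM}$, we have $\Gamma_M\subset\Omega_M$. As $\zx$ is the only pole of $R$ in $\Omega_M$, $R$ is analytic on $\Gamma_M$.

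Third, for the estimates we factor
\[
\hat X(z)=(z-\zx)g(z),\qquad \hat Y(z)=(z-\zy)h(z)
\]
on $\Omega_M$, with $g,h$ analytic and nonvanishing. Expanding to first order, as in the computation giving \cref{eq:detexpansion}, together with Cauchy estimates from \cref{lem:thetaestimates} on the second derivatives, yields
\[
|g|,|h|\asymp \Big(\int_{-M}^M\ue^2\Big)\,\big|\ve'(\pm M)-i\sqrt E\,\ve(\pm M)\big|^2 ,
\]
which is of size between $Ce^{2kM}$ and $Ce^{2kM}$ up to the ratio \cref{eq:distancequotient1}, hence comparable. Therefore
\[
|R(z)|\le C\,\frac{|z-\zy|}{|z-\zx|}.
\]
On $\Gamma_M$ the numerator is at most $|\im(\zx-\zy)|$. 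For the denominator, $\zx$ lies at distance at least $|\im(\zx-\zy)|$ from $\zy$, but that alone only gives a bound $O(1)$. To obtain $e^{-kM}$ I would instead use the finer structure: the leading parts of $g$ and $h$ differ, and at real $z$ one has $|\hat Y|\le|\hat X|$ by unitarity, $|R|^2+|T|^2=1$. The intended route is to bound $\hat Y$ on $\Gamma_M$ by $\sup|\hat Y'|\cdot|z-\zy|\le Ce^{2kM}\cdot Ce^{-2kM}$, and to bound $|\hat X|$ from below by $Ce^{kM}$ using the $\ue$-contributions $\ue'(\pm M)\ve(\mp M)$, which are of size $O(1)\cdot e^{kM}$. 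This yields $|R|\le Ce^{-kM}$. The derivative bound then follows from the Cauchy estimate on a slightly larger disc of radius $\sim e^{-2kM}$:
\[
|\partial_zR|\le \frac{\sup|R|}{r}\le Ce^{-kM}e^{2kM}=Ce^{kM}.
\]

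The main obstacle is the third step: obtaining the sharp size of $\hat X$ versus $\hat Y$ on $\Gamma_M$ so that the ratio is $e^{-kM}$ rather than $O(1)$. This requires carefully tracking which products of $\ue(\pm M)$ and $\ve(\pm M)$ survive in each Wronskian, and using \cref{eq:distancequotient1} to keep $\zx$ quantitatively away from $\Gamma_M$.
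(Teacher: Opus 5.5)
Your third step breaks down, and finer bookkeeping cannot repair it.

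\textbf{The lower bound on $\hat X$ fails.} The bound $|\hat X(z)|\ge Ce^{kM}$ on $\Gamma_M$ contradicts your own factorization. Since $\hat X=(z-\zx)g$ with $|g|$ of order $e^{2kM}$, and $|z-\zx|\le 2|\im(\zx-\zy)|=O(e^{-2kM})$ on $\Gamma_M$, you get $|\hat X|=O(1)$ there. Moreover, $\hat X$ vanishes at $\zx$. By \cref{lem:locationcomparison}, $\zx-\zy=i\,\im(\zx-\zy)+O(e^{-4kM})$, so $\zx$ lies on $\partial\Gamma_M$ or within $O(e^{-6kM})$ of it. The products $\ue'(\pm M)\ve(\mp M)$ you invoke are $O(1)$, not of size $e^{kM}$.

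\textbf{Your comparability of $g$ and $h$ is sharp.} Set $\Theta_v^\pm:=\ve'(\pm M)-i\sqrt E\,\ve(\pm M)$. Variation of parameters gives, near $E$,
\[
\partial_z\hat Y\approx-\Big(\int_{-M}^M\ue^2\Big)\Theta_v^+\Theta_v^-,\qquad \partial_z\hat X\approx-\Big(\int_{-M}^M\ue^2\Big)\Theta_v^+\overline{\Theta_v^-},
\]
using that $\ve$ is real. The relative errors are $o(1)$ on a disc of radius $O(e^{-2kM})$, and $|e^{-2i\sqrt zM}|=1+o(1)$ there. Since $R=-e^{-2i\sqrt zM}\hat Y/\hat X$, you get the two-sided estimate
\[
|R(z)|=\big(1+o(1)\big)\frac{|z-\zy|}{|z-\zx|},\qquad z\in\Gamma_M .
\]
Consequences:
\begin{itemize}
\item At $z=\zy+\tfrac12|\im(\zx-\zy)|\in\Gamma_M$ this gives $|R|\approx 5^{-1/2}$.
\item Near the point of $\partial\Gamma_M$ closest to $\zx$, $|R|$ is enormous, and infinite if $\zx\in\partial\Gamma_M$.
\item On the real axis this is just the Breit--Wigner profile of the transmission peak, with $|R|^2\approx\tfrac12$ at the half-width.
\end{itemize}
So $\sup_{\Gamma_M}|R|\le Ce^{-kM}$ is false. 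Shrinking $\Gamma_M$ by a fixed factor does not help. The bound holds only on discs about $\zy$ of radius about $e^{-kM}|\im(\zx-\zy)|$, i.e.\ of order $e^{-3kM}$.

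\textbf{The derivative step also fails.} No larger disc about $\zy$ avoids the pole at $\zx$. In fact $|\partial_zR(\zy)|\approx|\partial_z\hat Y(\zy)|/|\hat X(\zy)|\approx|\zx-\zy|^{-1}$, which is of order $e^{2kM}$, not $e^{kM}$.

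\textbf{Comparison with the paper.} The paper takes a different route. It uses $R(\zy)=0$ and a first-order Taylor/Lagrange bound to get $\sup_{\Gamma_M}|R|\le Ce^{-2kM}\sup_{\Gamma_M}|\partial_zR|$. It then bounds $\partial_zR$ by $Ce^{kM}$ through \cref{eq:reflectionform}. That identity holds for the scattering solution with unit incoming amplitude, but it is applied to $\uz$ normalized by $\uz(0)=1$. The missing normalizing factor is proportional to $1/\hat X$, so the paper's derivative bound fails for exactly the reason above. Your Wronskian representation $R=-e^{-2i\sqrt zM}\hat Y/\hat X$ is the right one.

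The obstruction you flagged at the end of your proposal is therefore genuine, not an artifact of your method. What your factorization actually proves is $|R(z)|\le C|z-\zy|/|z-\zx|$ near $E$. A true version of the statement needs a neighborhood of radius $O(e^{-kM}|\im(\zx-\zy)|)$ and a derivative bound of order $e^{2kM}$.
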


\begin{proof}
As discussed in for instance \cite{TZ01,DyatlovZworski}, we see that $\zx$ is the zero of $\hat{X}$ and $\zy$ is the zero of $\hat{Y}$, which are two naturally defined distributions. We also have that the reflection coefficient can be expressed as a quotient of these distributions:
\begin{equation}
\label{eq:RtoXY}
    \quad R(z) = \frac{\hat{Y}(z)}{\hat{X}(z)}
\end{equation}
as we have taken $R(z) = R_- (z)$ and considered the incoming wave as coming from $-\infty$.
If \cref{eq:distancequotient1} is satisfied, by \cref{cor:locationdistance} we have that $\zx\notin\Gamma_M$ and $R(z)$ is analytic in $\Gamma_M$ defined by \cref{eq:gamma1}. 

To arrive at the bound \cref{eq:reflectionbound1}, notice by \cref{eq:scatteringsolutions}, we can write the reflection coefficient as
\begin{equation}
\label{eq:reflectionform}
    R(z) = \frac{\Theta_-(z,w)}{-2i\sqrt{z}}e^{-i\sqrt{z}\,M} = \frac{\uz'(-M)-i\sqrt{z}\,\uz(-M)}{-2i\sqrt{z}}e^{-i\sqrt{z}\,M}.
\end{equation}
Differentiating this expression in $z$, and using the variation of parameters relation \cref{eq:dzuvop} we get
\begin{equation}
\label{eq:dzreflectionform}
    \partial_zR(z) = \frac{\Big(\int_0^M\uz(y)\,dy\Big)\big(\vz'(M)-i\sqrt{z}\vz(M)-i\uz(M)\big)}{-2i\sqrt{z}}\,e^{-i\sqrt{z}\,M}.
\end{equation}
Since $R$ is analytic in $\Gamma_M$, we can Taylor expand about $\zy$ and take magnitudes to get
\begin{equation}
\begin{aligned}
\label{eq:reflectionTaylor}
    |R(z)| &= \Big|\sum_{j=0}^\infty (j\,!)^{-1}\,\partial_z^jR(\zy)(z-\zy)^j\Big| \\
    &\leq \sum_{j=0}^\infty \big|\partial_z^jR(\zy)\big|\,|z-\zy|^j \\
    &\leq C\sum_{j=0}^\infty\big|\partial_z^jR(\zy)|e^{-2kjM}.
\end{aligned}
\end{equation}
Bounding this series at $j=0$ using the Lagrange error bound, we then get that 
\begin{equation}
\label{eq:reflectionTaylor2}
    \sup_{z\in\Gamma_M}|R(z)| \leq Ce^{-2kM}\bigg(\sup_{z\in\Gamma_M}\big|\partial_zR(z)\big|\bigg).
\end{equation}
By \cref{eq:dzreflectionform} this inequality becomes
\begin{equation}
\label{eq:reflectionTaylor3}
    \sup_{z\in\Gamma_M}|R(z)| \leq (Ce^{-2kM})\,\sup_{z\in\Gamma_M}\bigg(\frac{1}{2|\sqrt{z}|}\big(|\vz'(M) + |\sqrt{z}||\vz(M)| + |\uz(M)|\big)\bigg).
\end{equation}
 Note that we have estimates equivalent to \Cref{lem:nbdbounds} for $\vz$. Remarking that $\Gamma_M$ is a subset of the $z$-component of $\Omega_M$, we can apply these bounds to \cref{eq:reflectionTaylor3} and arrive at
\begin{equation}
\label{eq:reflectionTaylor4}
    \sup_{z\in\Gamma_M}|R(z)| \leq Ce^{-2kM}e^{kM} = Ce^{-kM}.
\end{equation}
The bound \cref{eq:dreflectiondbound1} comes from the same process of Taylor expansion and Lagrange error-bounding. The proof is similar so we exclude it here. 
\end{proof}

To give some additional explanation of \Cref{thm:reflectionbounds,thm:zeroreflection}, we can provide a numerical example. We construct a periodic potential with a defect and truncate either side sufficiently far from the defect. We impose boundary conditions and define an incoming wave on the right-hand side, then use Dirichlet finite difference operators to solve for the scattering solutions (transmission/reflection). In Figures \ref{fig:periodic} and \ref{fig:periodicBandGap}, we consider the potential
\[
V=10+5\cos(4\pi x)+5\tanh(x)\cos(2\pi x),
\]
which in \cite{2018ThickeWatsonLu} was demonstrated to have a gap eigenstate.  This potential converges exponentially as $x \to \pm \infty$ to distinct periodic potentials 
\begin{equation}
V_{{\rm per},\pm} = 10+5\cos(4\pi x)\pm5\cos(2\pi x),
\end{equation}
hence it is slightly different than the set of conditions we claimed for $V$ in the sense of Remark \ref{rmk:Vpm} but it is straightforward to see how our methods extend to such a case.  Continuing on, this potential that is known to give a defect state, upon truncation with $M=10$, we see clearly in Fig. \ref{fig:periodic} an extremely resolved peak in the neighborhood of our defect energy, which is $E\approx 19.77$, as well as on a larger scale in Fig. \ref{fig:periodicBandGap} that the $T/R$ coefficients are converging to describe the bands and gaps of the periodic potential.

\begin{figure}[h]
    \centering
    \includegraphics[width=1\linewidth]{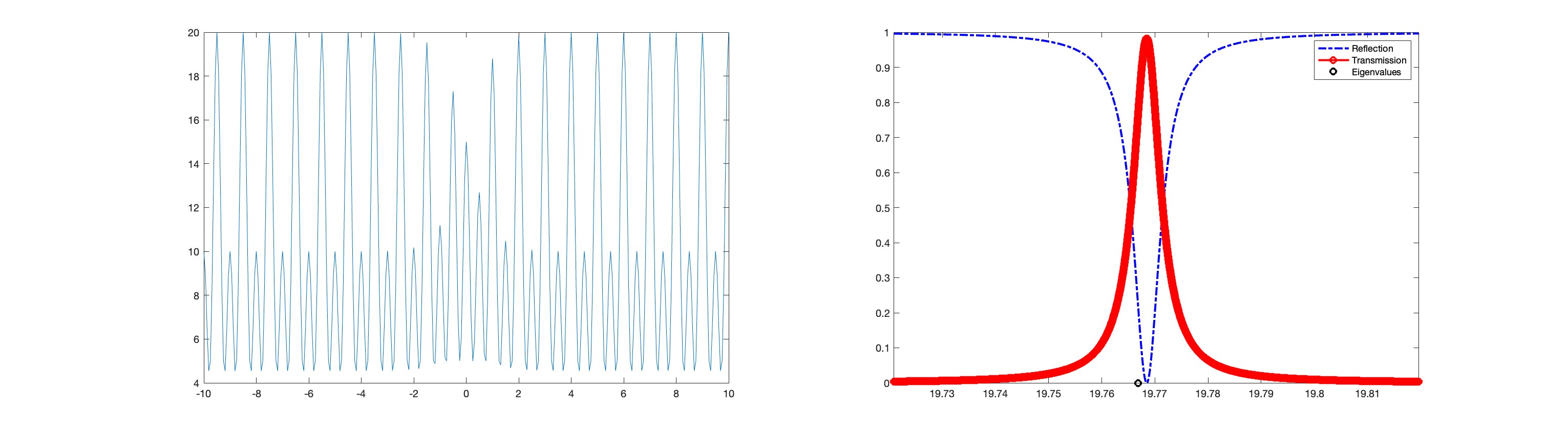}
    \caption{Left: Plot of the periodic potential with defect, $V=10+5\cos(4\pi x)+5\tanh(x)\cos(2\pi x)$, truncated at $M=10$. Right: Plot of the associated transmission-reflection (red-blue) curves around the eigenvalue $E\approx 19.77$}
    \label{fig:periodic}
\end{figure}

\begin{figure}[h]
    \centering
    \includegraphics[width=1\linewidth]{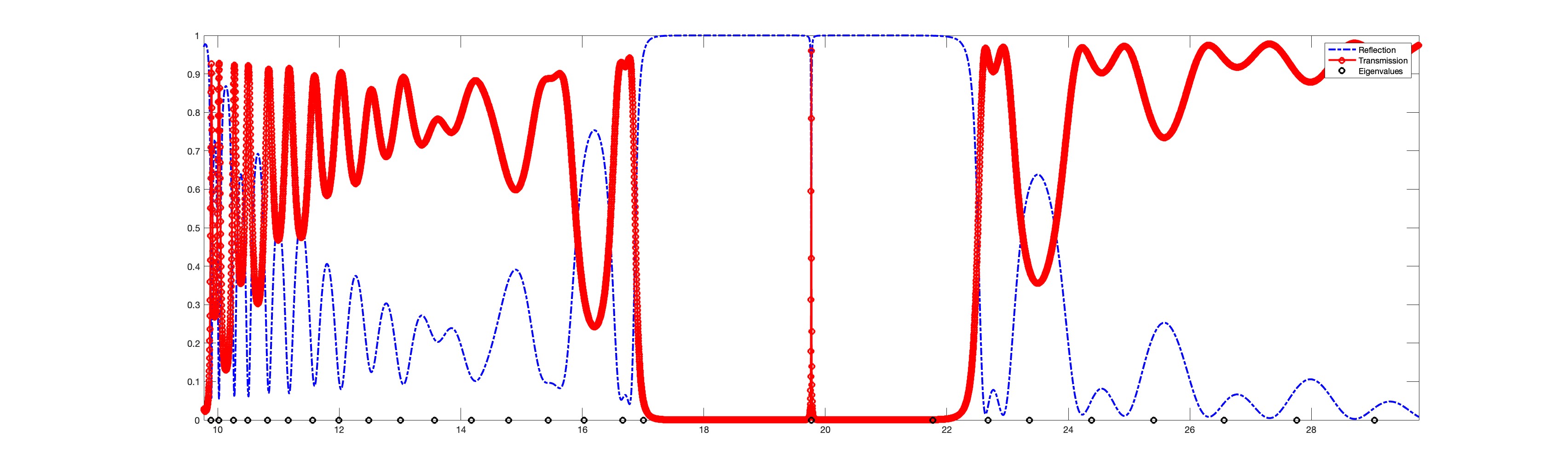}
    \caption{A transmission-reflection plot of the same potential in \cref{fig:periodic}, for a large set of energies. Outside an interval surrounding the bound state with a sharp transmission peak, we see a band-gap structure from the periodic operator's spectrum.}
    \label{fig:periodicBandGap}
\end{figure}

\section{Case Study: The Truncated Harmonic Oscillator}
\label{sec:tho}
As an example of zero reflection states generated by a well in an island we consider the setting of \cref{eq:h} with the potential given by $V(x) = x^2$. Then $H_{\text{trunc}}$ is defined by \cref{eq:htrunc} where
\begin{equation}
\label{eq:thopotential}
    V(x):= \begin{cases} x^2, & |x| \leq M \\ 0, & |x| > M \end{cases}
\end{equation}
for $M>0$. This model is the simple harmonic oscillator truncated at $-M$ and $M$. The structure of this potential will allow us to do a similar analysis to \Cref{sec:reflectionPoint} at the eigenvalues of the simple harmonic oscillator, where we have analytical descriptions of the fundamental solution set.

Note that the full operator \cref{eq:h} with $V(x)=x^2$ has bound state energies at $E_n=2n+1$ for all $n\in\mathbb{N}$, with associated bound states $\Phi_n(x)$ given by
\begin{equation}
\label{eq:thoboundstates}
    (H-E_n)\Phi_n(x) = 0, \qquad \Phi_n(0) = \mathbf{1}_{\text{even}}, \qquad \Phi_n'(0) = \mathbf{1}_{\text{odd}}, \\
\end{equation}
where
\begin{equation}
    \Phi_n(x) = \mathcal{H}_n(x)e^{-\frac{x^2}{2}}.
\end{equation}
Here, $\mathcal{H}_n$ is the n-th standard Hermite polynomial, $\mathbf{1}_{\text{even}}$ is 1 if $n$ is even and $0$ otherwise, and vice-versa for $\mathbf{1}_{\text{odd}}$.

We will prove the existence of zero reflection states very nearby each eigenvalue $E_n$ through arguments analogous to \Cref{sec:reflectionPoint}. When $n$ is even we get $\Phi_n(0)=1$ and $\Phi_n'(0)=0$. In this case we define $\uzn\in C^\infty([-M,M])$ and $\vzn\in C^\infty([-M,M])$ as the solutions of
\begin{equation}
\begin{aligned}
\label{eq:thofundamentals1}
    &(D_x^2+x^2-z_n)\uzn = 0, \qquad \uzn(0)=1, \qquad \uzn'(0) = w_n; \\
    &(D_x^2+x^2-z_n)\vzn = 0, \qquad \vzn(0)=\frac{-w_n}{1+w_n^2}, \qquad \vzn'(0) = \frac{1}{1+w_n^2}
\end{aligned}
\end{equation}
for arbitrary $\zeta_n=(w_n,z_n)\in\mathbb{C}^2$ and let $\eta_n = (0,E_n)$. With these definitions $\uzn$ and $\vzn$ form a fundamental solution set with Wronskian $1$ and $\uen(x) = \Phi_n(x)$ for $|x|\leq M$. When $n$ is odd we get $\Phi_n(0)=0$ and $\Phi_n'(0)=1$. In this case, we define $\uzn$ and $\vzn$ by
\begin{equation}
\begin{aligned}
\label{eq:thofundamentals2}
    &(D_x^2+x^2-z_n)\uzn = 0, \qquad \uzn(0)=-w_n, \qquad \uzn'(0) = 1; \\
    &(D_x^2+x^2-z_n)\vzn = 0, \qquad \vzn(0)=\frac{-1}{1+w_n^2}, \qquad \vzn'(0) = \frac{-w_n}{1+w_n^2}
\end{aligned}
\end{equation}
and we still have $\eta_n=(0,E_n)$. Again, $\uzn$ and $\vzn$ form a fundamental solution set and $\uen(x)=\Phi_n(x)$ for $|x|\leq M$. Unlike the generalized potentials we considered above, we have explicit forms for $\uen$ and $\ven$ on $x\in[-M,M]$:
\begin{equation}
\begin{aligned}
\label{eq:thofundamentalforms}
    &\uen(x) = \mathcal{H}_n(x)e^{-\frac{x^2}{2}}, \\
    \ven(x) = e^{\frac{x^2}{2}}\sum_{k=0}^\infty a_kx^k, &\quad a_{k+2} = -\frac{2(k+n+1)}{(k+2)(k+1)}a_k, \quad \begin{cases} a_0 = \mathbf{1}_{\text{odd}} .\\ a_1 = \mathbf{1}_{\text{even}}. \end{cases}
\end{aligned}
\end{equation}
From this equation, when $n$ is even we get
\begin{equation}
\label{eq:thoevenform}
\ven(x) = e^{\frac{x^2}{2}}\bigg(\sum_{k=0}^\infty\frac{(-2)^kx^{2k+1}}{(2k+1)!}\mathcal{O}(n^k)\bigg),
\end{equation}
and when $n$ is odd we get
\begin{equation}
\label{eq:thooddform}
\ven(x) = e^{\frac{x^2}{2}}\bigg(\sum_{k=0}^\infty\frac{(-2)^kx^{2k}}{(2k)!}\mathcal{O}(n^k)\bigg).
\end{equation}
We define 
\begin{equation}
\label{eq:thoxmap}
    (X_1^\pm(\zeta_n),X_2^\pm(\zeta_n)) := (\uzn(\pm M),\uzn'(\pm M))
\end{equation}
and let
\begin{equation}
\label{eq:thothetamap}
    \Theta^\pm(\zeta_n) := X_2^\pm(\zeta_n) -i\sqrt{z_n}X_1^\pm(\zeta_n).
\end{equation}
Then we have that $\zyn$ is a zero reflection point of $H_{\text{trunc}}$ if and only if
\begin{equation}
\label{eq:thoThetamap}
    \mathbf{\Theta}(\zetayn)=0,\text{ where }\mathbf{\Theta}(\zeta_n) := \begin{pmatrix} \Theta^+(\zeta_n) \\ \Theta^-(\zeta_n) \end{pmatrix},
\end{equation}
where $\zetayn = (\wyn,\zyn)$ for some $\wyn\in\mathbb{C}$. We can now state the main theorem for the truncated harmonic oscillator:

\begin{theorem}
\label{thm:THOmain}
Let $V(x) = x^2$, $n\in\mathbb{N}$, and let $\Phi_n$ be the bound state with eigenvalue $E_n$ as in \cref{eq:thoboundstates}. Then for all $M>0$, $\mathbf{\Theta}(\zeta_n)$ given by \cref{eq:thoThetamap} has a unique zero $\zetayn$ in the ball
\begin{equation}
\label{eq:thoball}
    \Omega_{M,n} := \Big\{ \zeta_n : |\zeta_n-\eta_n| \leq \frac{1}{M^{n+2}}e^{-\frac{M^2}{2}} \Big\}.
\end{equation}
\end{theorem}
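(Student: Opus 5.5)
We follow the contraction scheme of \Cref{sec:reflectionproof} verbatim, with the periodic growth/decay rate $e^{\pm k|x|}$ replaced by the explicit Gaussian rates coming from \cref{eq:thofundamentalforms}. Concretely, we define $\Psi_n(\zeta_n):=\zeta_n-\Xi_n\mathbf{\Theta}(\zeta_n)$, where $\Xi_n$ is the inverse of the Jacobian of $\zeta_n\mapsto(\Theta^+(\zeta_n),\Theta^-(\zeta_n))$ at $\eta_n$, as in \cref{eq:ximatrix} with $\eta$ replaced by $\eta_n$. By \cref{eq:psitheta}, fixed points of $\Psi_n$ are exactly zeros of $\mathbf{\Theta}$. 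It therefore suffices to show that $\Psi_n$ maps $\Omega_{M,n}$ into itself and that every entry of its Jacobian is bounded by $\tfrac12$ there. Banach's fixed point theorem then gives existence and uniqueness.

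The first step is to record the replacements for \cref{eq:uedecay,eq:vegrowth}. From $\uen=\mathcal H_n(x)e^{-x^2/2}$ we get $|\uen(\pm M)|,|\uen'(\pm M)|\le C_n M^{n+1}e^{-M^2/2}$. For $\ven$, the series in \cref{eq:thoevenform,eq:thooddform} is a confluent hypergeometric function, and its large-$x$ asymptotics give $|\ven(\pm M)|,|\ven'(\pm M)|\le C_n M^{-n}e^{M^2/2}$ (times a factor $M$ for the derivative). The analogue of the Wronskian argument \cref{eq:wronskianinequality}--\cref{eq:vlowerbounds} gives a matching lower bound $|\ven'(\pm M)-i\sqrt{E_n}\,\ven(\pm M)|\ge c_nM^{-n-1}e^{M^2/2}$, and this is nonzero because $\ven$ is real. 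Substituting \cref{eq:dwusolution,eq:dzuvop} (with $\uen,\ven$) into $\mathcal N(\eta_n)$ then reproduces \cref{eq:detexpansion}. Since $\int_{-M}^M\uen^2\to\|\Phi_n\|^2>0$, we obtain $|\mathcal N(\eta_n)|\gtrsim M^{-2n-2}e^{M^2}$, the analogue of \cref{lem:detbound}. For small $M$, where asymptotics are unavailable, we plan to use continuity and the explicit nonvanishing of the leading term, possibly after shrinking constants. This is a genuine gap for "all $M>0$" which we would handle by noting that the ball radius $M^{-n-2}e^{-M^2/2}$ is large there and tracking constants directly.

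The second step is the analogue of \cref{lem:nbdbounds,lem:thetaestimates}. We run the Picard/Duhamel argument with the solution operator $U_{\eta_n}$, whose norm on $[-M,M]$ is bounded by $G(M):=C_nM^{n}e^{M^2/2}$ (taking the worst of $\uen$ and $\ven$ together with the polynomial factors). The iteration contracts provided $|\zeta_n-\eta_n|\le (CM\,G(M))^{-1}$, and the radius of $\Omega_{M,n}$, namely $M^{-n-2}e^{-M^2/2}$, is chosen precisely to fit inside this region. We then obtain $|\partial_z^2\Theta^\pm|,|\partial_w^2\Theta^\pm|\lesssim G(M)$ uniformly on $\Omega_{M,n}$, together with first-derivative bounds at $\eta_n$ of the same size. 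The mean value estimate \cref{eq:termboundmvt} then bounds each Jacobian term by
\[
|\zeta_n-\eta_n|\cdot G(M)^2\,/\,|\mathcal N(\eta_n)| \lesssim M^{-n-2}e^{-M^2/2}M^{2n}e^{M^2}M^{2n+2}e^{-M^2}=M^{3n}e^{-M^2/2}.
\]
This is small, provided the polynomial factors are tracked carefully.

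The third step is self-mapping. The estimate $|\Psi_n(\eta_n)-\eta_n|=|\Xi_n\mathbf\Theta(\eta_n)|$ follows as in \cref{cor:reflectionasymptotics}. Since $\Theta^\pm(\eta_n)=O(M^{n+1}e^{-M^2/2})$ and $\|\Xi_n\|\lesssim G(M)/|\mathcal N(\eta_n)|$, the displacement is of order $e^{-M^2}$ up to polynomial factors, which is well inside half the radius.

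We expect the main obstacle to be the bookkeeping of the polynomial-in-$M$ factors, together with the claim for \emph{all} $M>0$ rather than $M\ge M_0$. The contraction margins above are only polynomially and Gaussian-exponentially favorable for large $M$. For bounded $M$, we would first try to argue by compactness that $\mathcal N(\eta_n)\ne0$ and that the required constants stay finite. If that fails, we would settle for $M\ge M_0(n)$ and treat the remaining range separately.
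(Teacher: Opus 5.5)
Your proposal follows the paper's route: the map $\Psi(\zeta_n)=\zeta_n-\Xi\mathbf{\Theta}(\zeta_n)$, Gaussian replacements for the growth/decay bounds, a lower bound on $\mathcal{N}(\eta_n)$ from its leading term quadratic in $v_{\eta_n}$, and Picard-iteration bounds on second derivatives. Your self-mapping check and sharper bound $|\mathcal{N}(\eta_n)|\gtrsim M^{-2n-2}e^{M^2}$ make it tighter than the paper's version, whose stated $|\mathcal{N}(\eta_n)|\ge CMe^{M^2/2}$, derivative bounds $CM^{n+1}e^{M^2/2}$ and radius $M^{-n-2}e^{-M^2/2}$ only give Jacobian entries of size $O(M^{n-1})$.

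Like the paper's argument, yours yields the result only for $M\ge M_0(n)$, and your fallback is the right reading. Your first idea for small $M$, relying on the ball being large, would not work. For small $M$ the radius blows up and the ball even contains the branch point $z=0$ of $\sqrt{z}$, so the contraction argument cannot be run there.
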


The overall idea of the proof of \Cref{thm:THOmain} is the same as the proof of \Cref{thm:zeroreflection} in \Cref{sec:reflectionPoint}. We construct a map so that a fixed point of the map is a point of zero reflection. We prove this map is a contraction and thus has a unique fixed point in $\Omega_{M,n}$. Define $\Psi:\mathbb{C}^2\rightarrow\mathbb{C}^2$ by
\begin{equation}
\label{eq:thopsimap}
    \Psi(\zeta_n) := \zeta_n - \Xi\mathbf{\Theta}(\zeta_n),
\end{equation}
where $\Xi$ is defined by
\begin{equation}
\label{eq:thoximatrix}
    \Xi := \frac{1}{\mathcal{N}(\eta_n)} \begin{pmatrix} \dztmen & -\dztpen \\ -\dwtmen & \dwtpen \end{pmatrix},
\end{equation}
where $\mathcal{N}(\eta_n) := \dwtpen\dztmen - \dztpen\dwtmen$. Assuming $\mathcal{N}(\eta_n)\neq0$ then $\Xi$ is invertible and hence
\begin{equation}
\label{eq:thopsithetarelation}
    \Psi(\zeta_n)=\zeta_n \Longleftrightarrow \mathbf{\Theta}(\zeta)=0.
\end{equation}
Then it remains to show that $\Psi$ is a contraction in the ball $\Omega_{M,n}$ given by \cref{eq:thoball} and \Cref{thm:THOmain} follows from the Banach fixed point theorem. 

\section{Proof of \Cref{thm:THOmain}}
By a similar argument made in \Cref{sec:reflectionproof}, we present the following lemmas necessary to show that $\Psi$ is a contraction.
\begin{lemma}
\label{lem:thodetbound}
    Let $\Theta^\pm(\zeta_n)$ be as in \cref{eq:thothetamap}. There exists a positive constant $C>0$ such that for all $M>0$,
    \begin{equation}
        \mathcal{N}(\eta_n) := \dwtpen\dztmen - \dztpen\dwtmen
    \end{equation}
    satisfies
    \begin{equation}
    |\mathcal{N}(\eta_n)| \geq CMe^{\frac{M^2}{2}}.
    \end{equation}
\end{lemma}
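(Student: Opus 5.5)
We follow the proof of \cref{lem:detbound}, now using the explicit forms \cref{eq:thofundamentalforms} in place of the Floquet bounds \cref{eq:uedecay,eq:vegrowth}.

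\textbf{Step 1: derivative formulas.} Differentiating the initial value problems \cref{eq:thofundamentals1,eq:thofundamentals2} gives the analogues of \cref{eq:dwusolution,eq:dzuvop}:
\[
\dw\uzn=\tfrac{w_n}{1+w_n^2}\uzn+\vzn,
\]
\[
\dz\uzn(x)=\Big(\int_0^x\ven\uen\Big)\uen(x)-\Big(\int_0^x\uen^2\Big)\ven(x)
\]
at $\zeta_n=\eta_n$, where $w_n=0$. Substituting into \cref{eq:thetaderivatives} and forming $\mathcal N(\eta_n)$, we isolate the terms quadratic in $\ven$, exactly as in \cref{eq:detexpansion}, to obtain
\[
\mathcal N(\eta_n)=\int_{-M}^M\uen^2\,dy\,\big(\ven'(M)-i\sqrt{E_n}\ven(M)\big)\big(\ven'(-M)-i\sqrt{E_n}\ven(-M)\big)+\mathcal R_n .
\]

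\textbf{Step 2: bound the remainder $\mathcal R_n$.} Each term of $\mathcal R_n$ contains at least one factor $\uen(\pm M)$ or $\uen'(\pm M)$, which are $O(M^{n+1}e^{-M^2/2})$. The companion factors are at most linear in $\ven$, multiplied by bounded integrals or by $\int_0^{\pm M}\ven\uen$. Using the Wronskian identity $\uen\ven'-\uen'\ven=1$ together with the explicit Hermite times Gaussian form, the products $\uen\ven$ are $O(1)$ up to polynomial factors. Hence $\mathcal R_n$ grows at most polynomially in $M$ and is negligible against the main term for large $M$.

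\textbf{Step 3: lower bound for the main term.} Parity simplifies this: $\ven$ has the opposite parity to $\uen$, so $|\ven'(-M)-i\sqrt{E_n}\ven(-M)|=|\ven'(M)-i\sqrt{E_n}\ven(M)|$. Since $\ven$ is real, this modulus equals $\big([\ven'(M)]^2+E_n[\ven(M)]^2\big)^{1/2}$. Applying Cauchy--Schwarz to the Wronskian, as in \cref{eq:wronskianinequality}, gives
\[
\big([\ven'(M)]^2+[\ven(M)]^2\big)^{1/2}\ \ge\ \big([\uen(M)]^2+[\uen'(M)]^2\big)^{-1/2}\ \gtrsim\ M^{-n-1}e^{M^2/2}.
\]
Alternatively, $\ven$ can be read off directly: $\ven(x)\sim c_n x^{-n-1}e^{x^2/2}$, and so $\ven'(M)\sim c_nM^{-n}e^{M^2/2}$. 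Together with $\int_{-M}^M\uen^2\ge c>0$, the main term is of size at least $e^{M^2}$ times a power of $M$, which dominates $CMe^{M^2/2}$ for $M$ large.

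\textbf{Step 4: small $M$ and the main obstacle.} The statement asks for the bound for \emph{all} $M>0$, with $C$ allowed to depend on $n$. For $M$ in a compact range $(0,M_1]$ we need $\mathcal N(\eta_n)\neq0$ together with continuity. As $M\to0$, $\mathcal N(\eta_n)$ tends to an explicit nonzero limit computable from the initial data, while the right-hand side $CMe^{M^2/2}\to0$, so a small $C$ works there. The main obstacle is proving nonvanishing for intermediate $M$. Our plan is to show $\im\mathcal N(\eta_n)\neq0$ using the reality of $\uen,\ven$: the imaginary part reduces to $\sqrt{E_n}$ times a sum of squares of real quantities, in the spirit of the remark following \cref{eq:detexpansion}. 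If this step fails, we would restate the lemma for $M\ge M_0(n)$, which is all that the contraction argument requires.
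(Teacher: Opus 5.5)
Your Steps 1--3 follow the same route as the paper: variation of parameters, bounds on $\uen,\ven$ at $\pm M$ from the explicit Hermite forms, and domination by the term quadratic in $\ven$. You carry it out more carefully, and the details are correct: the polynomial bound on the remainder, the parity identity for $|\ven'(\pm M)-i\sqrt{E_n}\ven(\pm M)|$, and the sharp rate $\ven(x)\sim c_nx^{-n-1}e^{x^2/2}$. This proves the bound for $M\ge M_0(n)$. That is also all the paper's leading-order argument covers.

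The gap is Step 4, which the literal claim ``for all $M>0$'' requires, and both of its ingredients fail.

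First, $\mathcal N(\eta_n)$ has no nonzero limit as $M\to0$. At $M=0$ one has $\Theta^+\equiv\Theta^-$, so the two rows of the Jacobian coincide and $\mathcal N(\eta_n)\to0$. Expanding with the initial data gives $\mathcal N(\eta_n)=(-1)^nM+O(M^3)$. So a small $C$ does work near $0$, but only because of this linear rate, which you have to compute; continuity plus a nonzero limit is not available.

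Second, $\im\mathcal N(\eta_n)$ vanishes identically, so it cannot certify nonvanishing. Here $\uen$ has parity $p=(-1)^n$ and $\ven$ has parity $-p$. Hence $\int_0^{-M}\uen\ven=\int_0^M\uen\ven=:A(M)$, and the exact identity from your Step 1 collapses to
\[
\mathcal N(\eta_n)=p\Big[I(M)\big([\ven'(M)]^2+E_n[\ven(M)]^2\big)-2A(M)\big(\uen'(M)\ven'(M)+E_n\uen(M)\ven(M)\big)-\uen(M)\ven(M)\Big],
\]
with $I(M)=\int_{-M}^M\uen^2$. This is a real number.

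Nonvanishing for intermediate $M$ therefore requires a sign analysis of the real bracket, call it $F(M)$. One has $F(0)=0$ and $F'(M)=1+2M^2\big(I(M)\,\ven(M)\ven'(M)-A(M)\,[\uen\ven]'(M)\big)$, but showing $F>0$ throughout is a genuinely missing step. Without it, your fallback statement for $M\ge M_0(n)$ is what the argument establishes.
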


\begin{lemma}
\label{lem:thothetaestimates}
    Let $\Omega_{M,n}$ be as in \cref{eq:thoball}. Then there exists a positive constant $C>0$ such that for all $M>0$ the following estimates hold
    \begin{equation}
    \label{eq:thofirstestimates}
        |\dz\Theta^\pm(\eta_n)| \leq CM^{n+1}e^{\frac{M^2}{2}}, |\dw\Theta^\pm(\eta_n)| \leq CM^{n+1}e^{\frac{M^2}{2}}
    \end{equation}
    and
    \begin{equation}
    \label{eq:thosecondestimates}
        \sup_{\zeta_n\in\Omega_{M,n}} |\dz^2\Theta^\pm(\zeta_n)|\leq CM^{n+1}e^{\frac{M^2}{2}}, \sup_{\zeta_n\in\Omega_{M,n}} |\dw^2\Theta^\pm(\zeta_n)|\leq CM^{n+1}e^{\frac{M^2}{2}}.
    \end{equation}
\end{lemma}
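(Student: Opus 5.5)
The plan is to follow the proof of \cref{lem:thetaestimates} in \Cref{sub:pfthetaestimates}, using the explicit fundamental solutions \cref{eq:thofundamentalforms} in place of the Floquet bounds \cref{eq:uedecay,eq:vegrowth}.

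\textbf{Step 1: pointwise bounds at $\zeta_n=\eta_n$.} From \cref{eq:thofundamentalforms},
\[
|\uen(x)|+|\uen'(x)|\le C(1+|x|)^{n+1}e^{-x^2/2}.
\]
For $\ven$ I would use the standard asymptotics of the second (non-polynomial) solution of the Hermite equation rather than the series: the Wronskian relation gives $\ven=\uen\int_0^x \uen^{-2}$ away from the zeros of $\mathcal H_n$. This yields
\[
|\ven(x)|\le C(1+|x|)^{-n-1}e^{x^2/2},\qquad |\ven'(x)|\le C(1+|x|)^{-n}e^{x^2/2}.
\]
The claimed bounds $CM^{n+1}e^{M^2/2}$ are weaker than this, so they hold either way.

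\textbf{Step 2: the first estimates \cref{eq:thofirstestimates}.} Substitute \cref{eq:dwusolution} (which holds verbatim for $V=x^2$) and the variation of parameters formula \cref{eq:dzuvop} into \cref{eq:thetaderivatives} at $x=\pm M$.
\begin{itemize}
\item[(a)] $\dw\Theta^\pm(\eta_n)$ involves only $\uen,\ven,\uen',\ven'$ at $\pm M$, with $w=0$. Its size is $O(M e^{M^2/2})$, coming from $\ven'$ and $\sqrt{E_n}\,\ven$.
\item[(b)] $\dz\Theta^\pm(\eta_n)$ contains the two integrals in \cref{eq:dzuvop}. Since $\int_0^{\pm M}\uen^2$ is bounded by $\|\Phi_n\|_{L^2}^2$, the term $\big(\int_0^{\pm M}\uen^2\big)\ven$ is $O(M^{-n}e^{M^2/2})$. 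The product $\uen\ven$ is polynomially bounded, so $\int_0^{\pm M}\uen\ven=O(M^{C})$; multiplied by $\uen$, this term is still exponentially small.
\end{itemize}
Both pieces are therefore bounded by $CM^{n+1}e^{M^2/2}$.

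\textbf{Step 3: the second estimates \cref{eq:thosecondestimates}.} Here I need an analogue of \cref{lem:nbdbounds}. I would rerun the Picard/Duhamel argument with $\Ue$ replaced by $U_{\eta_n}$. Step 1 gives
\[
\|U_{\eta_n}(x)\|\le C(1+|x|)^{n+1}e^{x^2/2}.
\]
Rather than the crude bound $\Ue(x-y)$, I would use the propagator $U_{\eta_n}(x)U_{\eta_n}(y)^{-1}$. Its norm is at most $C(1+|x|)^{n+1}(1+|y|)^{n+1}e^{(x^2+y^2)/2}$, and the Wronskian-one structure may allow a better product bound. The contraction condition then becomes
\[
|\zeta_n-\eta_n|\cdot M\cdot M^{2n+2}e^{M^2}\lesssim 1.
\]

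\textbf{Main obstacle.} This condition is stronger than the radius $M^{-n-2}e^{-M^2/2}$ in \cref{eq:thoball}, so the naive iteration loses a factor $e^{M^2/2}$. To fix this I would expand the propagator as
\[
\uen(x)\ven(y)-\ven(x)\uen(y).
\]
In each Duhamel term, the growing factor $\ven(x)$ is always paired with a decaying $\uen(y)$ inside the integral, so the integrand is polynomially bounded rather than growing like $e^{M^2}$. This would keep each iterate of size $M^{n+1}e^{M^2/2}$ times $(|\zeta_n-\eta_n|M^{n+2}e^{M^2/2})^j$, which sums within $\Omega_{M,n}$.

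Differentiating twice in $z$ or $w$ (the $w$-dependence enters only through the initial data and is quadratic-rational) gives the same bound. Then \cref{eq:dz2theta}, together with $|z_n|^{\pm1/2}\le C$ near $E_n$, completes the proof. For small $M$ the statement is trivial by continuity and compactness, after adjusting $C$.
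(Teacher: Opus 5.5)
You follow the paper's route: variation of parameters through \eqref{eq:dwusolution} and \eqref{eq:dzuvop} for \eqref{eq:thofirstestimates}, and a Picard/Duhamel argument for \eqref{eq:thosecondestimates}. Steps 1--2 are fine. You are also right, and more careful than the paper, that the propagator is $U_{\eta_n}(x)U_{\eta_n}(y)^{-1}$, with scalar kernel $G(x,y)=\uen(x)\ven(y)-\ven(x)\uen(y)$, and that crude norm bounds on it lose a factor $e^{M^2/2}$.

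\textbf{The small-$M$ claim is false.} Your closing sentence is wrong: the estimates cannot hold for all $M>0$, so no continuity or compactness argument can give them. By the analogues of \eqref{eq:dwu1}--\eqref{eq:dwu2}, as $M\to0$ one has $\dw\Theta^\pm(\eta_n)\to1$ for even $n$ and $\dw\Theta^\pm(\eta_n)\to i\sqrt{E_n}$ for odd $n$, while $CM^{n+1}e^{M^2/2}\to0$. Moreover, the radius $M^{-n-2}e^{-M^2/2}$ of $\Omega_{M,n}$ tends to infinity. So for small $M$ the ball contains $z_n=0$, where $\sqrt{z_n}$ is not analytic and the term $\tfrac{i}{4}z_n^{-3/2}\uzn(\pm M)$ of \eqref{eq:dz2theta} is unbounded. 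The lemma must be read as holding for $M\ge M_0$, as in \cref{lem:thetaestimates}. That is all your argument (and the paper's) proves, and all the contraction argument needs.

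\textbf{Step 3 needs a weighted norm to close.} There are three problems as written. First, your ratio $|\zeta_n-\eta_n|M^{n+2}e^{M^2/2}$ is only $O(1)$ on $\Omega_{M,n}$, so the geometric series is not justified. Second, polynomial boundedness holds only for the $\ven(x)\uen(y)$ piece; the $\uen(x)\ven(y)$ piece is controlled only because $|y|\le|x|$. Third, sup-norm bookkeeping fails for the $z$-derivatives. Write $u=u_0+(z_n-E_n)\mathcal G u$ with $\mathcal G f(x)=\int_0^x G(x,y)f(y)\,dy$. Then $\dz u=\mathcal G u+(z_n-E_n)\mathcal G\dz u$ and $\dz^2u=2\mathcal G\dz u+(z_n-E_n)\mathcal G\dz^2u$. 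Since $\|\mathcal G\|_{L^\infty\to L^\infty}$ is of order $e^{M^2/2}$ up to powers of $M$, this only yields $|\dz^2u|\lesssim e^{M^2}$, not $M^{n+1}e^{M^2/2}$.

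The fix is to iterate in the weighted norm $\sup_{|x|\le M}|f(x)|/\omega(x)$ with $\omega(x)=(1+|x|)^pe^{x^2/2}$ and $p>-n-1$. Your Step 1 bounds give $\int_0^x|G(x,y)|\,\omega(y)\,dy\le K\omega(x)$ with $K$ independent of $M$ (using $|y|\le|x|$ for the second piece). Hence $u_{\zeta_n}$, $\dz u_{\zeta_n}$, $\dz^2u_{\zeta_n}$ and their $x$-derivatives are $O(M^{p+1}e^{M^2/2})$ at $\pm M$, uniformly for $|\zeta_n-\eta_n|\le c$ with $c$ fixed. That set contains $\Omega_{M,n}$ once $M\ge M_0$.

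Finally, $\dw^2\Theta^\pm\equiv0$ outright, because $u_{\zeta_n}$ is affine in $w_n$ (its initial data are). The rational dependence on $w_n$ sits in $v_{\zeta_n}$, not $u_{\zeta_n}$.
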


The proofs of \cref{lem:thodetbound} and \cref{lem:thothetaestimates} are the same as the proofs for \cref{lem:detbound} and \cref{lem:thetaestimates}, except we use the new estimates resulting from \cref{eq:thoevenform,eq:thooddform}.

\begin{proof}[Proof of \cref{lem:thodetbound}]
    Recall that for the harmonic oscillator we have analytical expressions for $\uen(x)$ and $\ven(x)$ in $x\in[-M,M]$ given by \cref{eq:thoevenform,eq:thooddform}. Remarking that the power series in either case is controllable by a constant, we get
    \begin{equation}
    \label{eq:thouestimates}
        |\uen(\pm M)| \leq CM^ne^{-\frac{M^2}{2}}, |\uen'(\pm M)|\leq CM^{n+1}e^{-\frac{M^2}{2}}
    \end{equation}
    and
    \begin{equation}
    \label{eq:thovestimates}
        |\ven(\pm M)| \leq Ce^{\frac{M^2}{2}}, |\ven'(\pm M)| \leq CMe^{\frac{M^2}{2}}.
    \end{equation}
    Through an analogous consideration of leading order terms in an expansion of $\mathcal{N}(\eta_n)$, which depend quadratically on $\ven$, we arrive at \Cref{lem:thodetbound}.
\end{proof}

\begin{proof}[Proof of \Cref{lem:thothetaestimates}]
    From \cref{eq:thouestimates,eq:thovestimates}, by the same variation of parameters argument used in the general case and expansions of $\Theta^\pm$, we get that
    \begin{equation}
    \label{eq:thothetaestimates}
        |\dz\Theta^\pm(\eta_n)| \leq CM^{n+1}e^{\frac{M^2}{2}}, |\dw\Theta^\pm(\eta_n)| \leq CM^{n+1}e^{\frac{M^2}{2}}.
    \end{equation}
    By the same Picard iteration scheme as presented above, we get the estimates \cref{eq:thosecondestimates} whenever $|\zeta_n-\eta_n| \leq \frac{1}{M^{n+2}}e^{-\frac{M^2}{2}}$.
\end{proof}

In a similar fashion to the general periodic potentials presented in \Cref{thm:reflectionbounds,thm:zeroreflection}, we provide a simple numerical example in Figs. \ref{fig:harmonic}, \ref{fig:harmonicSweep}, and \ref{fig:harmonicLargeSweep}. Here, we construct the potential for a related simple harmonic oscillator $V=1+x^2$ and truncate it sufficiently far from zero ($M=4$ in Fig. \ref{fig:harmonic} and $M=3$ in Figs. \ref{fig:harmonicSweep}, \ref{fig:harmonicLargeSweep}).  Peaks in transmission that widen as you move higher up in the spectrum and eventually disappear as you get to high enough energy clearly arise.

\begin{figure}[h]
    \centering
    \includegraphics[width=1\linewidth]{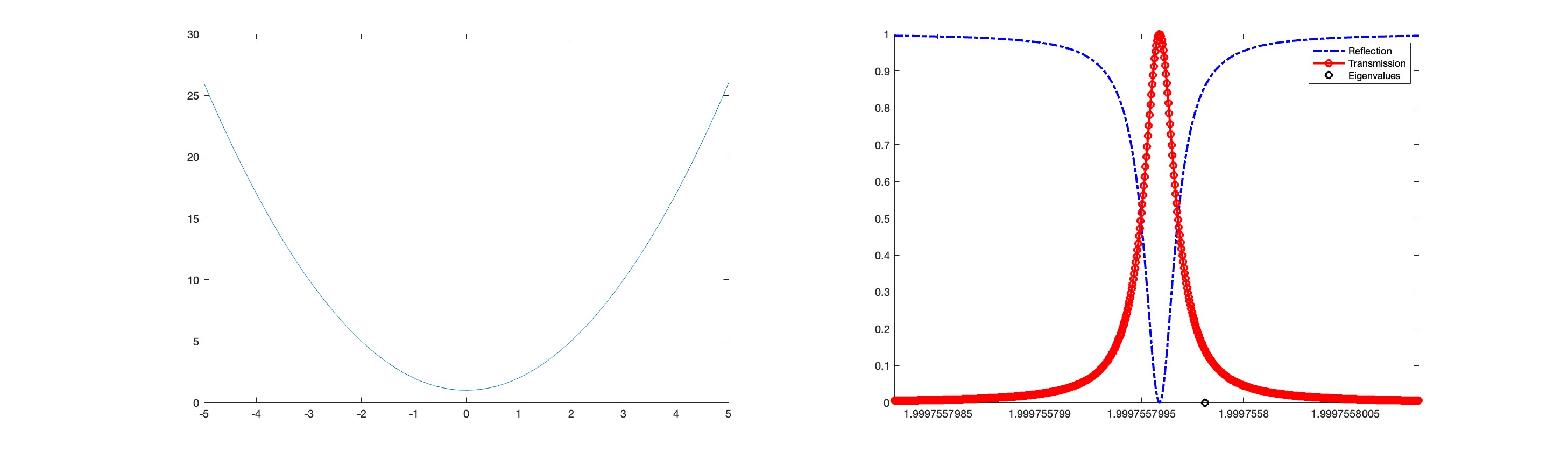}
    \caption{Left: Plot of the simple harmonic oscillator potential ($V=1+x^2$) truncated at $M=5$. Right: The resulting transmission-reflection curves near the first eigenvalue $E=2$ on the right.}
    \label{fig:harmonic}
\end{figure}

\begin{figure}[h]
    \centering
    \includegraphics[width=1\linewidth]{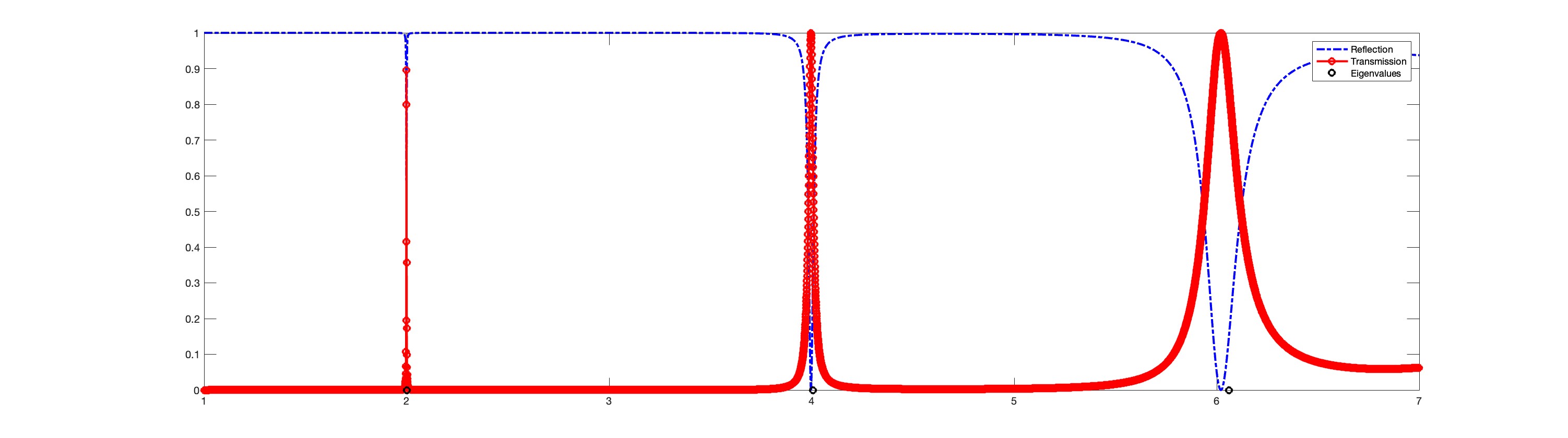}
    \caption{Transmission-reflection plot for the first three eigenvalues of the harmonic oscillator presented in \cref{fig:harmonic} truncated at $M=3$. Transmission peak widths increase as $n$ increases.}
    \label{fig:harmonicSweep}
\end{figure}

\begin{figure}[h]
    \centering
    \includegraphics[width=1\linewidth]{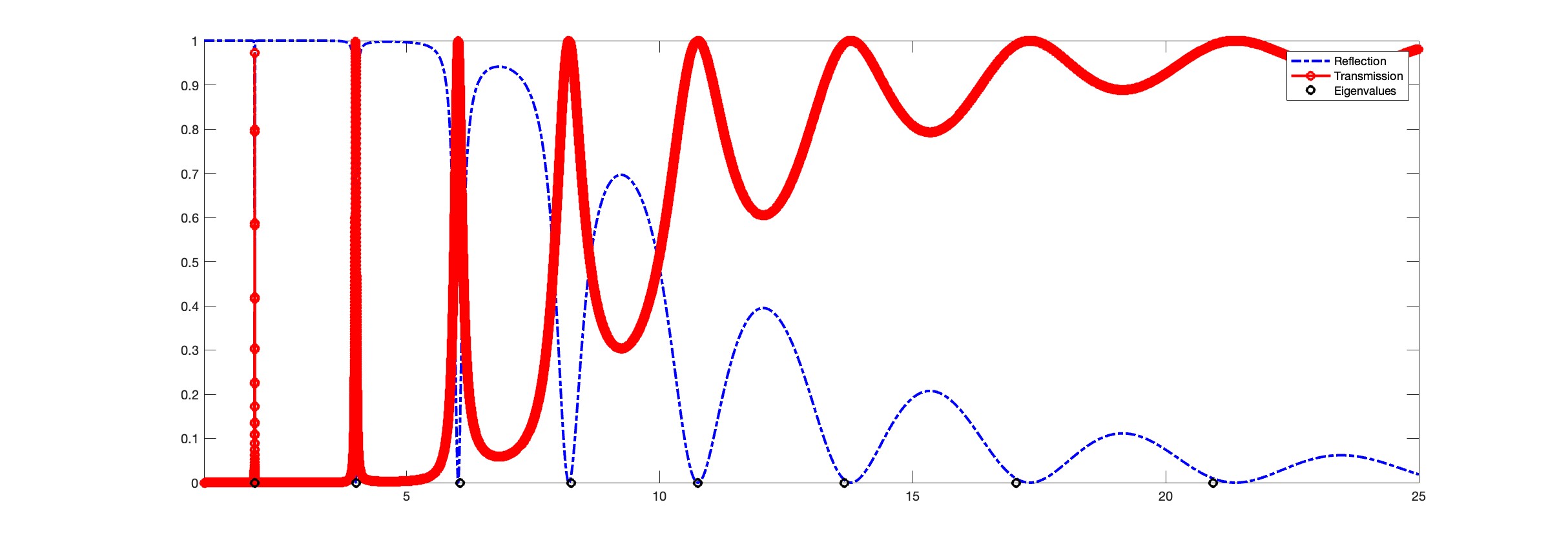}
    \caption{The same harmonic oscillator as in \cref{fig:harmonic,fig:harmonicSweep}, truncated at $M=3$. As energy increases, behavior settles out to pure transmission. Compare to \cref{fig:periodicBandGap}, where the periodic structure leads to bands and gaps in the transmission/reflection plots.}
    \label{fig:harmonicLargeSweep}
\end{figure}

\appendix
\section{Proof of \Cref{lem:locationcomparison}}
\label{app:5pt1}

For convenience and brevity, we will use the following notation to make statements more compact:
\begin{equation}\label{eq:notation}
  u_\eta(\pm M) = u_\pm, \quad u_\eta'(\pm M) = u_\pm', \quad v_\eta(\pm M) = v_\pm, \quad v_\eta'(\pm M) = v_\pm'.
\end{equation}
Expanding the asymptotic form of $\zx$ given in \cite{LMW22}, which is defined as $z^*$, we get
\begin{equation}\label{eq:ResonanceFull}
\begin{aligned}
  & \zx = E- \frac{(v_+'-iE^{1/2}v_+)(u_-'+iE^{1/2}u_-)-(v_-'+iE^{1/2}v_-)(u_+'-iE^{1/2}u_+)}{\big(\int_{-M}^M\ue^2(y)\,dy\big)(v_-'+iE^{1/2}v_-)(v_+'-iE^{1/2}v_+)} \\
  &\qquad+ O(e^{-4kM}) \\
  &= E- \frac{v_+'u_-'-v_-'u_+'+Ev_+u_--Ev_-u_++iE^{1/2}(v_+'u_--v_+u_-'+v_-'u_+-v_-u_+')}{\Big(\int_{-M}^M\ue^2(y)\,dy\Big)(v_-'v_+'+Ev_-v_++iE^{1/2}(v_-v_+'-v_-'v_+))} \\
  &\qquad+ O(e^{-4kM}).
\end{aligned}
\end{equation}
We take the following values for $a,b,c,d\in\mathbb{R}$ given by 
\begin{equation} \label{eq:abcd1}
\begin{aligned}
  a_X &= v_+'u_-'-v_-'u_+'+Ev_+u_--Ev_-u_+, \\
  b_X &= E^{1/2}(v_+'u_--v_+u_-'+v_-'u_+-v_-u_+'), \\
  c_X&= \Big(\int_{-M}^M\ue^2(y)\,dy\Big)(v_-'v_+'+Ev_-v_+) ,\\
  d_X &= \Big(\int_{-M}^M\ue^2(y)\,dy\Big)(E^{1/2}(v_-v_+'-v_-'v_+)),
\end{aligned}
\end{equation}
then we have that $\re(z_X)$ and $\im(\zx)$ are given by
\begin{equation}\label{eq:ReImX}
  \re(\zx) = E - \frac{a_X c_X +b_X d_X}{c_X^2+d_X^2} + O(e^{-4kM}), \qquad \im(\zx) = \frac{a_X d_X -b_X c_X}{c_X^2+d_X^2}+O(e^{-4kM}).
\end{equation}
We first compute the numerator in the fraction term of $\operatorname{Re}(z_X)$ given in \eqref{eq:ReImX}:
\begin{equation}\label{eq:ac1}
\begin{aligned}
  a_X c_X &= \Big(\int_{-M}^M\ue^2(y)\,dy\Big)(v_-'v_+'+Ev_-v_+)(v_+'u_-'-v_-'u_+'+Ev_+u_--Ev_-u_+) \\
  &=\Big(\int_{-M}^M \ue^2(y)\,dy\Big)\Big[(v_+')^2v_-'u_-' -(v_-')^2v_+'u_+' +Ev_-'v_+'v_+u_-' -Ev_-'v_+'v_-u_+ \\
  &\qquad+Ev_-v_+v_+'u_-' -Ev_-v_+v_-'u_+' +E^2v_+^2v_-u_- -E^2v_-^2v_+u_+\Big]
\end{aligned}
\end{equation}
and
\begin{equation}\label{eq:bd1}
\begin{aligned}
  b_X d_X &= \Big(\int_{-M}^M \ue^2(y)\,dy\Big)E(v_-v_+'-v_-'v_+)(v_+'u_- -v_+u_-' +v_-'u_+ -v_-u_+')\\
  &= \Big(\int_{-M}^M \ue^2(y)\,dy\Big)\Big[E(v_+')^2v_-u_- -Ev_+'v_-v_+u_-' +Ev_-'v_+'v_-u_+ -Ev_-^2v_+'u_+' \\
  &\qquad-Ev_-'v_+'v_+u_- +Ev_+^2v_-'u_-' -E(v_-')^2v_+u_+ +Ev_-'v_-v_+u_+'\Big].
\end{aligned}
\end{equation}
Adding \eqref{eq:ac1} and \eqref{eq:bd1}, we have that 
\begin{equation} \label{eq:acplusbd1}
\begin{aligned}
  & a_X c_X+b_X d_X \\
  & = \Big(\int_{-M}^M \ue^2(y)\,dy\Big)\Big[ (v_+')^2v_-'u_-' -(v_-')^2v_+'u_+' +E^2v_+^2v_-u_- -E^2v_-^2v_+u_+ \\
  & \hspace{1cm} \qquad+E(v_+')^2v_-u_- -Ev_-^2v_+'u_+' +Ev_+^2v_-'u_-' -E(v_-')^2v_+u_+\Big]\\
  &= \Big(\int_{-M}^M \ue^2(y)\,dy\Big) \times \\
  &  \hspace{1.5cm} \Big[\big((v_+')^2+Ev_+^2\big)(v_-'u_-'+Ev_-u_-) - \big((v_-')^2+Ev_-^2\big)(v_+'u_+'+Ev_+u_+)\Big].
\end{aligned}
\end{equation}
Now computing the denominator of the fraction term in $\re(\zx)$:
\begin{equation}\label{eq:csquared1}
\begin{aligned}
  c_X^2 &= \Big(\int_{-M}^M \ue^2(y)\,dy\Big)^2(v_-'v_+'+Ev_-v_+)^2 \\
  &= \Big(\int_{-M}^M \ue^2(y)\,dy\Big)^2\big((v_-')^2(v_+')^2 + 2Ev_-'v_+'v_-v_+ + E^2v_-^2v_+^2\big)
\end{aligned}
\end{equation}
and
\begin{equation}\label{eq:dsquared1}
\begin{aligned}
  d_X^2 &= \Big(\int_{-M}^M \ue^2(y)\,dy\Big)^2 E (v_-v_+'-v_-'v_+)^2\\
  &= \Big(\int_{-M}^M \ue^2(y)\,dy\Big)^2\big(Ev_-^2(v_+')^2 - 2Ev_-'v_+'v_-v_+ + E(v_-')^2v_+^2 \big).
\end{aligned}
\end{equation}
Adding \eqref{eq:csquared1} and \eqref{eq:dsquared1}, we have that
\begin{equation}\label{eq:csquaredplusdsquared1}
\begin{aligned}
  c_X^2+d_X^2 &= \Big(\int_{-M}^M \ue^2(y)\,dy\Big)^2\big((v_-')^2(v_+')^2 + Ev_-^2(v_+')^2 + E(v_-')^2v_+^2 + E^2v_-^2v_+^2\big) \\
  &= \Big(\int_{-M}^M \ue^2(y)\,dy\Big)^2\big((v_-')^2+Ev_-^2\big)\big((v_+')^2+Ev_+^2\big).
\end{aligned}
\end{equation}
Dividing \eqref{eq:acplusbd1} by \eqref{eq:csquaredplusdsquared1} and considering \eqref{eq:ReImX}, we have the real part of $\zx$ given by
\begin{align}\label{eq:RealPartX}
  \re(\zx) & = E - \frac{\big((v_+')^2+Ev_+^2\big)(v_-'u_-'+Ev_-u_-) - \big((v_-')^2+Ev_-^2\big)(v_+'u_+'+Ev_+u_+)}{\Big(\int_{-M}^M u_\eta^2(y)\,dy\Big)\big((v_-')^2+Ev_-^2\big)\big((v_+')^2+Ev_+^2\big)} \\
  & + O(e^{-4kM}). \notag
\end{align}
Similarly, we will compute the imaginary part of $\zx$ as given in \eqref{eq:ReImX}. We start by computing the numerator of the fraction term in $\im(\zx)$:
\begin{equation}\label{eq:ad1}
\begin{aligned}
  a_X d_X &= E^{1/2} \Big(\int_{-M}^M \ue^2(y)\,dy\Big) (v_-v_+'-v_-'v_+)(v_+'u_-'-v_-'u_+'+Ev_+u_--Ev_-u_+)\\
  &= E^{1/2} \Big(\int_{-M}^M \ue^2(y)\,dy\Big) \Big[ (v_+')^2v_-u_-' - v_-'v_+'v_-u_+' + Ev_+'v_-v_+u_- - Ev_-^2v_+'u_+ \\
  &\qquad- v_-'v_+'v_+u_-' + (v_-')^2v_+u_+' - Ev_+^2v_-'u_- + Ev_-'v_-v_+u_+ \Big]
\end{aligned}
\end{equation}
and
\begin{equation}\label{eq:bc1}
\begin{aligned}
  b_X c_X &= E^{1/2} \Big(\int_{-M}^M \ue^2(y)\,dy\Big) (v_-'v_+'+Ev_-v_+) (v_+'u_--v_+u_-'+v_-'u_+-v_-u_+') \\
  &= E^{1/2}\Big(\int_{-M}^M \ue^2(y)\,dy\Big) \Big[ (v_+')^2v_-'u_- - v_-'v_+'v_+u_-' + (v_-')^2v_+'u_+ - v_-'v_+'v_-u_+' \\
  &\qquad+ Ev_+'v_-v_+u_- - Ev_+^2v_-u_-' + Ev_-'v_-v_+u_+ - Ev_-^2v_+u_+'\Big].
\end{aligned}
\end{equation}
Subtracting \eqref{eq:bc1} from \eqref{eq:ad1}, we have that
\begin{equation}\label{eq:adminusbc1}
\begin{aligned}
  a_X d_X -b_X c_X &= E^{1/2} \Big(\int_{-M}^M \ue^2(y)\,dy\Big) \Big[(v_+')^2(v_-u_-'-v_-'u_-) + Ev_+^2(v_-u_-'-v_-'u_-) \\
  &\qquad+ (v_-')^2(v_+u_+'-v_+'u_+) + Ev_-^2(v_+u_+'-v_+'u_+)\Big] \\
  &= -E^{1/2} \Big(\int_{-M}^M \ue^2(y)\,dy\Big)\Big[(v_+')^2 + Ev_+^2 + (v_-')^2 + Ev_-^2\Big].
\end{aligned}
\end{equation}
Then dividing \eqref{eq:adminusbc1} by \eqref{eq:csquaredplusdsquared1} and considering \eqref{eq:ReImX}, we have the imaginary part of $\zx$ given by
\begin{equation}\label{eq:ImaginaryPartX}
  \im(\zx) = -\frac{E^{1/2} \big((v_+')^2 + Ev_+^2 + (v_-')^2 + Ev_-^2\big)}{\Big(\int_{-M}^M \ue^2(y)\,dy\Big)\big((v_-')^2+Ev_-^2\big)\big((v_+')^2+Ev_+^2\big)} + O(e^{-4kM})
\end{equation}

We take a similar process of expanding $\zy$ given by \eqref{eq:zylocation} and get
\begin{equation}\label{ZRFull}
\begin{aligned}
  & \zy = E- \frac{(v_+'-iE^{1/2}v_+)(u_-'-iE^{1/2}u_-)-(v_-'-iE^{1/2}v_-)(u_+'-iE^{1/2}u_+)}{\big(\int_{-M}^M\ue^2(y)\,dy\big)(v_-'-iE^{1/2}v_-)(v_+'-iE^{1/2}v_+)} \\
  & \hspace{2cm} + O(e^{-4kM}) \\
  &= E - \frac{v_+'u_-' - Ev_+u_- - v_-'u_+' + Ev_-u_+ + iE^{1/2}(v_-'u_++v_-u_+'-v_+'u_--v_+u_-')}{\Big(\int_{-M}^M \ue^2(y)\,dy\Big)\big(v_-'v_+'-Ev_-v_++iE^{1/2}(-v_-'v_+-v_-v_+')\big)} \\
  & \hspace{2cm} + O(e^{-4kM}).
\end{aligned}
\end{equation}
We similarly define $a_Y,b_Y,c_Y,d_Y\in\mathbb{R}$ to the following values and note that \eqref{eq:ReImX} holds for $\zy$ as well with these new values of $a_Y,b_Y,c_Y,d_Y$.
\begin{equation}\label{eq:abcd2}
\begin{aligned}
  a_Y&= v_+'u_-' - Ev_+u_- - v_-'u_+' + Ev_-u_+,\\
  b_Y&= E^{1/2}(v_-'u_++v_-u_+'-v_+'u_--v_+u_-'),\\
  c_Y&= \Big(\int_{-M}^M \ue^2(y)\,dy\Big)(v_-'v_+'-Ev_-v_+),\\
  d_Y &= \Big(\int_{-M}^M \ue^2(y)\,dy\Big)E^{1/2}(-v_-'v_+-v_-v_+').
\end{aligned}
\end{equation}
We compute all necessary terms as we did for $\zx$:
\begin{equation}\label{eq:ac2}
\begin{aligned}
  a_Y c_Y &= \Big(\int_{-M}^M \ue^2(y)\,dy\Big)(v_-'v_+'-Ev_-v_+)(v_+'u_-' - Ev_+u_- - v_-'u_+' + Ev_-u_+) \\
  &= \Big(\int_{-M}^M \ue^2(y)\,dy\Big)\Big[ (v_+')^2v_-'u_-' - Ev_-'v_+'v_+u_- - (v_-')^2v_+'u_+' + Ev_-'v_+'v_-u_+ \\
  &\qquad- Ev_+'v_-v_+u_-' + E^2v_+^2v_-u_- + Ev_-'u_+'v_-v_+ - E^2v_-^2v_+u_+\Big]
\end{aligned}
\end{equation}
and
\begin{equation}\label{eq:bd2}
\begin{aligned}
  & b_Y d_Y = E\Big(\int_{-M}^M \ue^2(y)\,dy\Big)(v_-'u_+ +v_-u_+' -v_+'u_- - v_+u_-')(-v_-'v_+-v_-v_+')\\
  &= \Big(\int_{-M}^M \ue^2(y)\,dy\Big)\Big[-E(v_-')^2v_+u_+ - Ev_-'v_-v_+u_+' + Ev_-'v_+'v_+u_- + Ev_+^2v_-'u_-' \\
  & \hspace{1cm}- Ev_-'v_+'v_-u_+ - Ev_-^2v_+'u_+' + E(v_+')^2v_-u_- + Ev_+'v_-v_+u_-'\Big].
\end{aligned}
\end{equation}
Thus
\begin{equation}\label{eq:acplusbd2}
\begin{aligned}
 & a_Y c_Y + b_Y d_Y \\
  &= \Big(\int_{-M}^M \ue^2(y)\,dy\Big)\Big[(v_+')^2v_-'u_-' + E^2v_+^2v_-u_- - (v_-')^2v_+'u_+' - E^2v_-^2v_+u_+ \\
  &\qquad- E(v_-')^2v_+u_+ + Ev_+^2v_-'u_-' - Ev_-^2v_+'u_+' + E(v_+')^2v_-u_-\Big]\\
  &= \Big(\int_{-M}^M \ue^2(y)\,dy\Big) \times \\
  &\hspace{1cm} \Big[\big((v_+')^2+Ev_+^2\big)(v_-'u_-'+Ev_-u_-) - \big((v_-')^2+Ev_-^2\big)(v_+'u_+'+Ev_+u_+)\Big].
\end{aligned}
\end{equation}
Also,
\begin{equation}\label{eq:csquared2}
\begin{aligned}
  c_Y^2 &= \Big(\int_{-M}^M \ue^2(y)\,dy\Big)^2 (v_-'v_+'-Ev_-v_+)^2 \\
  &= \Big(\int_{-M}^M \ue^2(y)\,dy\Big)^2 \big( (v_-')^2(v_+')^2 -2Ev_-'v_+'v_-v_+ + E^2v_-^2v_+^2\big)
\end{aligned}
\end{equation}
and
\begin{equation}\label{eq:dsquared2}
\begin{aligned}
  d_Y^2 &= E\Big(\int_{-M}^M \ue^2(y)\,dy\Big)^2 (-v_-'v_+-v_-v_+')^2\\
  &= \Big(\int_{-M}^M \ue^2(y)\,dy\Big)^2\big(E(v_-')^2v_+^2 + 2Ev_-'v_+'v_-v_+ + E(v_+')^2v_-^2\big),
\end{aligned}
\end{equation}
so 
\begin{equation}\label{eq:csquaredplusdsquared2}
\begin{aligned}
  c_Y^2+d_Y^2 &= \Big(\int_{-M}^M \ue^2(y)\,dy\Big)^2\big((v_-')^2(v_+')^2 + Ev_-^2(v_+')^2 + E(v_-')^2v_+^2 + E^2v_-^2v_+^2\big) \\
  &= \Big(\int_{-M}^M \ue^2(y)\,dy\Big)^2\big((v_-')^2+Ev_-^2\big)\big((v_+')^2+Ev_+^2\big).
\end{aligned}
\end{equation}
Then we have the real part of $\zy$ given by
\begin{equation}\label{eq:RealPartY}
\begin{aligned}
  \re(\zy) & = E - \frac{\big((v_+')^2+Ev_+^2\big)(v_-'u_-'+Ev_-u_-) - \big((v_-')^2+Ev_-^2\big)(v_+'u_+'+Ev_+u_+)}{\Big(\int_{-M}^M \ue^2(y)\,dy\Big)\big((v_-')^2+Ev_-^2\big)\big((v_+')^2+Ev_+^2\big)} \\
  & + O(e^{-4kM}).
\end{aligned}
\end{equation}
Furthermore,
\begin{equation}\label{eq:ad2}
\begin{aligned}
  & a_Y d_Y \\
  &= E^{1/2}\Big(\int_{-M}^M \ue^2(y)\,dy\Big)(-v_-v_+'-v_-'v_+)(v_+'u_-'-Ev_+u_--v_-'u_+'+Ev_-u_+)\\
  &= E^{1/2}\Big(\int_{-M}^M \ue^2(y)\,dy\Big)\Big[-v_-'v_+'v_+u_-' + Ev_+^2v_-'u_- + (v_-')^2v_+u_+' - Ev_-'v_+'v_-u_+ \\
  &\qquad- (v_+')^2v_-u_-' + Ev_+'v_-v_+u_- + v_-'v_+'v_-u_+' - Ev_-^2v_+'u_+\Big]
\end{aligned}
\end{equation}
and
\begin{equation}\label{eq:bc2}
\begin{aligned}
  b_Y c_Y &= E^{1/2}\Big(\int_{-M}^m \ue^2(y)\,dy\Big)(v_-'v_+'-Ev_-v_+)(v_-'u_++v_-u_+'-v_+'u_--v_+u_-')\\
  &= E^{1/2}\Big(\int_{-M}^M \ue^2(y)\,dy\Big)\Big[(v_-')^2v_+'u_+ + v_-'v_+'v_-u_+' - (v_+')^2v_-'u_- - v_-'v_+'v_+u_-' \\
  &\qquad- Ev_-'v_-v_+u_+ - Ev_-^2v_+u_+' + Ev_+'v_-v_+u_- + Ev_+^2v_-u_-'\Big],
\end{aligned}
\end{equation}
so
\begin{equation}\label{eq:adminusbc2}
\begin{aligned}
  a_Yd_Y -b_Y c_Y &= E^{1/2}\Big(\int_{-M}^M \ue^2(y)\,dy\Big)\Big[(v_+')^2(v_-'u_--v_-u_-') + (v_-')^2(v_+u_+'-v_+'u_+) \\
  &\qquad+ Ev_+^2(v_-'u_--v_-u_-') + Ev_-^2(v_+u_+'-v_+'u_+)\Big]\\
  &= E^{1/2}\Big(\int_{-M}^M \ue^2(y)\,dy\Big)\big((v_+')^2+Ev_+^2-(v_-')^2-Ev_-^2\big).
\end{aligned}
\end{equation}
Thus, the imaginary part of $\zy$ is given by
\begin{equation}\label{eq:ImaginaryPartY}
\begin{aligned}
  \im(\zy) = \frac{E^{1/2}\big((v_+')^2+Ev_+^2-(v_-')^2-Ev_-^2\big)}{\Big(\int_{-M}^M \ue^2(y)\,dy\Big)\big((v_-')^2+Ev_-^2\big)\big((v_+')^2+Ev_+^2\big)} + O(e^{-4kM}).
\end{aligned}
\end{equation}

Comparing the results of \eqref{eq:RealPartX}, \eqref{eq:RealPartY} and \eqref{eq:ImaginaryPartX}, \eqref{eq:ImaginaryPartY} against each other, we arrive at \eqref{eq:realcomparison}, \eqref{eq:imaginarycomparison} as desired.

\bibliographystyle{siamplain}
\bibliography{references}

\begin{thebibliography}{10}

\bibitem{bandres2018topological}
{\sc M.~A. Bandres, S.~Wittek, G.~Harari, M.~Parto, J.~Ren, M.~Segev, D.~N.
  Christodoulides, and M.~Khajavikhan}, {\em Topological insulator laser:
  Experiments}, Science, 359 (2018), p.~eaar4005.

\bibitem{2011BronskiRapti}
{\sc J.~C. Bronski and Z.~Rapti}, {\em Counting defect modes in periodic
  eigenvalue problems}, SIAM Journal on Mathematical Analysis, 43 (2011),
  pp.~803--827,
  \url{https://login.proxy.lib.duke.edu/login?url=https://search.proquest.com/docview/879784374?accountid=10598}.
\newblock Copyright - Copyright Society for Industrial and Applied Mathematics
  2011; Document feature - Graphs; Equations; ; Last updated - 2012-06-29.

\bibitem{Cherdantsev2009}
{\sc M.~Cherdantsev}, {\em {Spectral convergence for high-contrast elliptic
  periodic problems with a defect via homogenization}}, Mathematika, 55 (2009),
  pp.~29--57, \url{https://doi.org/10.1112/S0025579300000942}.

\bibitem{1986DeiftHempel}
{\sc P.~A. Deift and R.~Hempel}, {\em On the existence of eigenvalues of the
  schr{\"o}dinger operator $h-\lambda w$ in a gap of $\sigma(h)$}, Comm. Math.
  Phys., 103 (1986), pp.~461--490,
  \url{https://projecteuclid.org:443/euclid.cmp/1104114795}.

\bibitem{dobson2013resonances}
{\sc D.~C. Dobson, F.~Santosa, S.~P. Shipman, and M.~I. Weinstein}, {\em
  Resonances of a potential well with a thick barrier}, SIAM Journal on Applied
  Mathematics, 73 (2013), pp.~1489--1512.

\bibitem{2019Drouot_2}
{\sc A.~Drouot}, {\em Characterization of edge states in perturbed honeycomb
  structures}, Pure and Applied Analysis, 1 (2019), pp.~385--445.

\bibitem{2020Drouot}
{\sc A.~Drouot}, {\em Ubiquity of conical points in topological insulators},
  arXiv preprint arXiv:2004.07068,  (2020).

\bibitem{2018DrouotFeffermanWeinstein_pre}
{\sc A.~Drouot, C.~L. Fefferman, and M.~I. Weinstein}, {\em Defect modes for
  dislocated periodic media}.
\newblock \url{arxiv.org/abs/1810.05875}, 2018.

\bibitem{2019DrouotWeinstein}
{\sc A.~Drouot and M.~Weinstein}, {\em Edge states and the valley hall effect},
  Advances in Mathematics, 368 (2020), p.~107142.

\bibitem{DVW:15}
{\sc V.~Duch\^ene, I.~Vuki{\'c}evi{\'c}, and M.~Weinstein}, {\em Homogenized
  description of defect modes in periodic structures with localized defects},
  Commun. Math. Sci., 13 (2015), pp.~777--823.

\bibitem{duchene2014scattering}
{\sc V.~Duchene, I.~Vuki{\'c}evi{\'c}, and M.~I. Weinstein}, {\em Scattering
  and localization properties of highly oscillatory potentials}, Communications
  on Pure and Applied Mathematics, 67 (2014), pp.~83--128.

\bibitem{duchene2015oscillatory}
{\sc V.~Duch{\^e}ne, I.~Vukicevic, and M.~I. Weinstein}, {\em Oscillatory and
  localized perturbations of periodic structures and the bifurcation of defect
  modes}, SIAM Journal on Mathematical Analysis, 47 (2015), pp.~3832--3883.

\bibitem{duchene2011scattering}
{\sc V.~Duch{\^e}ne and M.~I. Weinstein}, {\em Scattering, homogenization, and
  interface effects for oscillatory potentials with strong singularities},
  Multiscale Modeling \& Simulation, 9 (2011), pp.~1017--1063.

\bibitem{DZ19}
{\sc S.~Dyatlov and M.~Zworski}, {\em Mathematical Theory of Scattering
  Resonances}, American Mathematical Society, 2019.

\bibitem{DyatlovZworski}
{\sc S.~Dyatlov and M.~Zworski}, {\em Mathematical theory of scattering
  resonances}, Graduate Studies in Mathematics, American Mathematical Society,
  2019.

\bibitem{2017FeffermanLee-ThorpWeinstein_2}
{\sc C.~Fefferman, J.~Lee-Thorp, and M.~Weinstein}, {\em Edge states in
  honeycomb structures}, Annals of PDE, 2 (2016), p.~12.

\bibitem{fefferman_leethorp_weinstein_memoirs}
{\sc C.~Fefferman, J.~Lee-Thorp, and M.~Weinstein}, {\em Topologically
  protected states in one-dimensional systems}, vol.~247 of Memoirs of the
  American Mathematical Society, American Mathematical Society, 2017.

\bibitem{2018FeffermanWeinstein}
{\sc C.~L. Fefferman and M.~I. Weinstein}, {\em Edge states of continuum
  {S}chrodinger operators for sharply terminated honeycomb structures}.
\newblock \url{arxiv.org/abs/1810.03497}, 2018.

\bibitem{2001FigotinGoren}
{\sc A.~Figotin and V.~Goren}, {\em Resolvent method for computations of
  localized defect modes of h-polarization in two-dimensional photonic
  crystals}, Phys. Rev. E, 64 (2001), p.~056623,
  \url{https://doi.org/10.1103/PhysRevE.64.056623},
  \url{https://link.aps.org/doi/10.1103/PhysRevE.64.056623}.

\bibitem{1997FigotinKlein}
{\sc A.~Figotin and A.~Klein}, {\em Localized classical waves created by
  defects}, Journal of Statistical Physics, 86 (1997), pp.~165--177,
  \url{https://doi.org/10.1007/BF02180202},
  \url{https://doi.org/10.1007/BF02180202}.

\bibitem{Figotin-Klein:98}
{\sc A.~Figotin and A.~Klein}, {\em Midgap defect modes in dielectric and
  acoustic media}, SIAM J. Appl. Math., 58 (1998), pp.~1748--1773.

\bibitem{1993GesztesySimon}
{\sc F.~Gesztesy and B.~Simon}, {\em A short proof of {Z}heludev's theorem},
  Trans. Amer. Math. Soc., 335 (1993), pp.~329--340.

\bibitem{harari2018topological}
{\sc G.~Harari, M.~A. Bandres, Y.~Lumer, M.~C. Rechtsman, Y.~D. Chong,
  M.~Khajavikhan, D.~N. Christodoulides, and M.~Segev}, {\em Topological
  insulator laser: Theory}, Science, 359 (2018), p.~eaar4003.

\bibitem{Hoefer-Weinstein:11}
{\sc M.~Hoefer and M.~Weinstein}, {\em Defect modes and homogenization of
  periodic {S}chr{\"o}dinger operators}, SIAM J. Mathematical Analysis, 43
  (2011), pp.~971--996.

\bibitem{Kamotski2018a}
{\sc I.~V. Kamotski and V.~P. Smyshlyaev}, {\em {Localized Modes Due to Defects
  in High Contrast Periodic Media Via Two-Scale Homogenization}}, Journal of
  Mathematical Sciences (United States), 232 (2018), pp.~349--377,
  \url{https://doi.org/10.1007/s10958-018-3877-y}.

\bibitem{2019Lee-ThorpWeinsteinZhu}
{\sc J.~P. Lee-Thorp, M.~I. Weinstein, and Y.~Zhu}, {\em Elliptic operators
  with honeycomb symmetry: Dirac points, edge states and applications to
  photonic graphene}, Archive for Rational Mechanics and Analysis, 232 (2019),
  pp.~1--63, \url{https://doi.org/10.1007/s00205-018-1315-4},
  \url{https://doi.org/10.1007/s00205-018-1315-4}.

\bibitem{LMW22}
{\sc J.~Lu, J.~L. Marzuola, and A.~B. Watson}, {\em Defect resonances of
  truncated crystal structures}, SIAM Journal on Applied Mathematics, 82
  (2022), \url{https://doi.org/10.1137/21M1415601}.

\bibitem{osting2013long}
{\sc B.~Osting and M.~I. Weinstein}, {\em Long-lived scattering resonances and
  bragg structures}, SIAM Journal on Applied Mathematics, 73 (2013),
  pp.~827--852.

\bibitem{1976Simon_2}
{\sc B.~Simon}, {\em The bound state of weakly coupled schr{\"o}dinger
  operators in one and two dimensions}, Annals of Physics, 97 (1976), pp.~279
  -- 288, \url{https://doi.org/https://doi.org/10.1016/0003-4916(76)90038-5},
  \url{http://www.sciencedirect.com/science/article/pii/0003491676900385}.

\bibitem{2005Soussi}
{\sc S.~Soussi}, {\em Convergence of the supercell method for defect modes
  calculations in photonic crystals}, SIAM Journal on Numerical Analysis, 43
  (2005), pp.~1175--27,
  \url{https://login.proxy.lib.duke.edu/login?url=https://search.proquest.com/docview/922260043?accountid=10598}.

\bibitem{TZ01}
{\sc S.~H. Tang and M.~Zworski}, {\em Potential scattering on the real line}.
\newblock unpublished notes.

\bibitem{2018ThickeWatsonLu}
{\sc K.~Thicke, A.~B. Watson, and J.~Lu}, {\em {Computing edge states without
  hard truncation}}, SIAM J. Sci. Comput.,  (in press).

\bibitem{turner2025resonance}
{\sc J.~C. Turner and M.~I. Weinstein}, {\em Resonance-induced nonlinear bound
  states}, arXiv preprint arXiv:2510.19538,  (2025).

\end{thebibliography}
\end{document}